\documentclass[11pt,a4paper]{amsart}

\usepackage[utf8]{inputenc}
\usepackage{amsmath,amsthm,amssymb,mathtools}
\usepackage{graphicx}
\usepackage{xcolor}
\usepackage{aliascnt}
\usepackage{fullpage}
\usepackage{needspace}
\usepackage[noadjust]{cite}
\usepackage[hidelinks]{hyperref}
\usepackage[noabbrev,capitalize,nameinlink]{cleveref}
\usepackage[shortlabels]{enumitem}
\usepackage{bm}

\allowdisplaybreaks[1]
\numberwithin{equation}{section}
\crefname{equation}{}{}

\theoremstyle{plain}
\newtheorem{theorem}{Theorem}[section]
\newtheorem*{theorem*}{Theorem}

\newaliascnt{proposition}{theorem}

\aliascntresetthe{proposition}
\crefname{proposition}{Proposition}{Propositions}

\newaliascnt{lemma}{theorem}
\newtheorem{lemma}[lemma]{Lemma}
\aliascntresetthe{lemma}
\crefname{lemma}{Lemma}{Lemmas}

\newaliascnt{corollary}{theorem}
\newtheorem{corollary}[corollary]{Corollary}
\aliascntresetthe{corollary}
\crefname{corollary}{Corollary}{Corollaries}

\theoremstyle{remark}
\newaliascnt{remark}{theorem}

\aliascntresetthe{remark}
\crefname{remark}{Remark}{Remarks}

\newcommand{\mc}[1]{\mathcal{#1}}

\newcommand{\RR}{\mathbb{R}}

\newcommand{\E}{\mathbb{E}}
\newcommand{\PP}{\mathbb{P}}
\newcommand{\ZZ}{\mathbb{Z}}

\newcommand{\SL}{\mathrm{SL}}
\newcommand{\LS}{\mathrm{LS}}
\newcommand{\KL}{\mathrm{KL}}

\renewcommand{\le}{\leqslant}
\renewcommand{\ge}{\geqslant}

\newcommand{\inner}[2]{\langle #1,#2 \rangle}
\newcommand{\norm}[1]{\left \| #1 \right \|}
\newcommand{\abs}[1]{\left \lvert #1 \right \rvert}

\DeclareMathOperator{\Cov}{Cov}
\DeclareMathOperator{\Id}{Id}
\DeclareMathOperator{\Ent}{Ent}
\DeclareMathOperator{\im}{im}
\DeclareMathOperator{\tr}{tr}
\DeclareMathOperator{\rank}{rank}
\DeclareMathOperator{\spn}{span}
\DeclareMathOperator{\Var}{Var}
\DeclareMathOperator{\Vol}{Vol}
\DeclareMathOperator{\Flt}{Flt}
\DeclareMathOperator{\intr}{int}
\DeclareMathOperator{\Unif}{Unif}

\title{Strongly Log-Concave Measures and Lattices}
\hypersetup{
    pdftitle={Strongly Log-Concave Measures and Lattices},
    pdfauthor={Brayden Letwin, Victor Reis, Thomas Rothvoss}
}

\author[Letwin]{Brayden Letwin}
\address{Department of Mathematics, University of Washington, Seattle, WA 98195}
\email{letwin@uw.edu}

\author[Reis]{Victor Reis}
\address{Microsoft Research}
\email{victorol@microsoft.com}

\author[Rothvoss]{Thomas Rothvoss}
\address{Department of Mathematics, University of Washington, Seattle, WA 98195}
\email{rothvoss@uw.edu}
\thanks{Rothvoss was supported by NSF grant 2318620 \emph{AF: SMALL:
The Geometry of Integer Programming and Lattices}.}

\begin{document}

\begin{abstract}
Let $\Lambda \subseteq \RR^n$ be a full rank lattice with the property that $\det \Gamma \ge 1$ for all sublattices $\Gamma \subseteq \Lambda^*$. We prove that there exists a symmetric random vector $X \in \RR^n$ with a smooth, strictly positive density $f : \RR^n \to (0, \infty)$ satisfying that
\begin{enumerate}
    \item $f$ is $\alpha$-strongly log-concave, where $\alpha = 2\pi/e$.
    \item $X + \Lambda$ is uniform on $\RR^n/\Lambda$.
\end{enumerate}
Moreover, the existence of such a density characterizes this property up to a universal dilation. There are a few consequences of the above. We list some. First, for such lattices we can bound the covering radius 
\[\mu(\Lambda, K) =\max_{x\in\RR^n}\min_{\lambda\in\Lambda}\norm{x-\lambda}_K \le \sqrt{\frac{2e}{\pi}} \cdot \E\, \norm{G}_K\] of a convex body $K \subseteq \RR^n$ containing $0$ in its interior, where $G \sim N(0,\Id)$. This implies an $O(n \sqrt{\log n})$ bound for the flatness constant and an $O(\log^{3/2} n)$ bound for the subspace flatness constant. We also compare the entire ordered spectrum of the associated flat torus on $\Lambda^*$ with that of a Gaussian, yielding an alternative proof of the reverse Minkowski theorem.
\end{abstract}
\maketitle
\section{Introduction}\label{sec:1}
A \textit{lattice} is a discrete additive subgroup of $\RR^n$. Lattices play a central role in cryptography and theoretical computer science, as well as in number theory and the geometry of numbers. Every lattice admits linearly independent vectors $b_1, \ldots, b_k \in \RR^n$, called a \textit{basis}, satisfying that $\Lambda = \spn_{\ZZ}\{b_1, \ldots, b_k\}$. The number $k$ is the \textit{rank} of the lattice. When $k = n$, we say that the lattice has \textit{full rank} and its \textit{determinant} is given by
\[
    \det(\Lambda) = \abs{\det(b_1, \ldots, b_n)}.
\]
For a lower-rank lattice, the determinant is computed in its linear span. The determinant is independent of the chosen basis, and one has that $1/\det \Lambda$ captures the "average density" of $\Lambda$. Precisely, for a full-rank lattice,
\[
    \lim_{R \to \infty} \frac{\abs{\Lambda \cap R B_2^n}}{\Vol_n(R B_2^n)} = \frac{1}{\det \Lambda}.
\]
We recommend Gruber and Lekkerkerker \cite{GeometryOfNumbers-GruberLekkerkerker1987} for an overview of lattices. A \textit{sublattice} $\Gamma \subseteq \Lambda$ of $\Lambda$ is an additive subgroup of $\Lambda$, which is itself a lattice because $\Gamma$ is automatically discrete. Any sublattice $\Gamma \subseteq \Lambda$ that is of the form $\Lambda \cap E$ where $E$ is a subspace of $\RR^n$ is called \textit{primitive}. For such a primitive sublattice, the set $\Lambda/\Gamma = \Pi_{ (\spn_{\RR}\Gamma)^{\perp}} \Lambda$ is a lattice, called a \textit{quotient lattice}. Here $E^{\perp}$ denotes the orthogonal complement of a subspace $E \subseteq \RR^n$, and $\Pi_E$ the orthogonal projection onto $E$. For a full-rank lattice, its \textit{dual lattice} is $\Lambda^*=\{x\in\RR^n:\inner{x}{\lambda}\in\ZZ\text{ for every }\lambda\in\Lambda\}$. 

We call $\Lambda$ a \textit{stable lattice} if $\det \Lambda = 1$ and $\det \Gamma \ge 1$ for every sublattice $\Gamma \subseteq \Lambda$. In other words, a stable lattice $\Lambda$ is one having the property that no sublattice $\Gamma \subseteq \Lambda$ is more dense than itself. The work of Harder and Narasimhan\footnote{The terminology in such works has often used the term \textit{semistability} instead of stability, we adopt the latter term.} \cite{Harder1974}, Stuhler \cite{Stuhler1976}, and Grayson \cite{Grayson1984} shows that for any lattice $\Lambda \subseteq \RR^n$, there exists a filtration $\{0\}=\Lambda_0 \subsetneq \Lambda_1 \subsetneq \ldots \subsetneq \Lambda_k = \Lambda$ by primitive sublattices, called the \textit{canonical filtration} or \textit{stable filtration} satisfying two properties
\begin{enumerate}
    \item The \textit{normalized determinants} of $\Lambda_i/\Lambda_{i-1}$, denoted by $r_i = \det(\Lambda_i/\Lambda_{i-1})^{1/\rank(\Lambda_i/\Lambda_{i-1})}$, are \textit{strictly} increasing: $r_1 < \ldots < r_k$.
    \item Each scaled quotient lattice $\frac{1}{r_i}(\Lambda_i/\Lambda_{i-1})$ is stable.
\end{enumerate}
The stable filtration of $\Lambda$ is unique, and often many problems about arbitrary lattices can be reduced to the stable case by using the stable filtration. Stable lattices are also typical among determinant-one lattices. To state this precisely, let
\[
    \mc{L}_n = \{\Lambda \subseteq \RR^n : \Lambda \text{ is a full-rank lattice and } \det \Lambda = 1 \}
\]
be the set of all determinant one lattices in $\RR^n$. This space can be identified with $\SL_n(\RR)/\SL_n(\ZZ)$  and carries a unique $\SL_n(\RR)$-invariant probability measure $\mu_n$, called the Haar--Siegel measure; see \cite{OrdentlichRegevWeiss2022,Siegel1945}. Shapira and Weiss \cite{ShapiraWeiss2014} proved that
\[
    \PP_{\Lambda \sim \mu_n}(\Lambda \text{ is stable}) \to 1, \quad \text{as } n \to \infty.
\]
In fact, they showed that the rate at which the above probability converges is superexponential, with the probability that the above event fails at most $\exp(-\Omega(n \log n))$. For a full-rank lattice $\Lambda \subseteq \RR^n$, we will let $\lambda_1(\Lambda) = \min_{0 \neq \lambda \in \Lambda} \norm{\lambda}_2$ be the length of the shortest non-zero lattice point in $\Lambda$. Minkowski's first theorem \cite{GeometryOfNumbers-GruberLekkerkerker1987} states that
\[
\lambda_1(\Lambda) \le \sqrt{n} \cdot (\det \Lambda)^{1/n},
\]
i.e., if the determinant of $\Lambda$ is small (which is a global property about the "density" of $\Lambda$) then $\Lambda$ contains a short non-zero lattice point (which is a local property about the "density" of $\Lambda$). Another "global-to-local" result, generalizing the above, is that of Blichfeldt and van der Corput \cite{GeometryOfNumbers-GruberLekkerkerker1987} which states that
\[
    \abs{\Lambda \cap r B_2^n} \ge \frac{1}{2^n} \cdot \frac{\Vol_n(rB_2^n)}{\det \Lambda}.
\]
One may ask whether in general one can obtain a "local-to-global" result of the form, \textit{if there are many short lattice points, then does this imply that the lattice has small determinant?} It is easy to see that this question generally cannot be true, but a fundamental result, the \textit{reverse Minkowski theorem}, conjectured by Regev and Dadush~\cite{DadushRegev2016} and proven by Regev and Stephens--Davidowitz~\cite{RegevStephensDavidowitz2024} shows that there is a positive answer if one only cares about finding a sublattice with small determinant. For $s > 0$ and $x \in \RR^n$ let $\rho_s(x) = e^{-\pi \norm{x}_2^2/s^2}$ be the \textit{Gaussian mass} of $x$. In the estimates below we assume $n\ge2$, the one-dimensional case being (usually) immediate. The reverse Minkowski theorem is stated as follows: there exists a (sufficiently large) universal constant $C > 0$ so that for any lattice $\Lambda \subseteq \RR^n$ with $\det\Gamma\ge1$ for every nonzero sublattice $\Gamma\subseteq\Lambda$ and for all $0 < s \le 1/(C\log n)$ one has that
\begin{equation} \label{eq:1.1}
    \sum_{\lambda \in \Lambda} \rho_s(\lambda) \le \frac{3}{2},
\end{equation}
or in other words, if the above sum is "large" then some non-zero sublattice $\{0\} \neq \Gamma \subseteq \Lambda$ has $\det \Gamma < 1$.
\begin{figure}[htbp]
    \centering
    \includegraphics[width=0.4\linewidth]{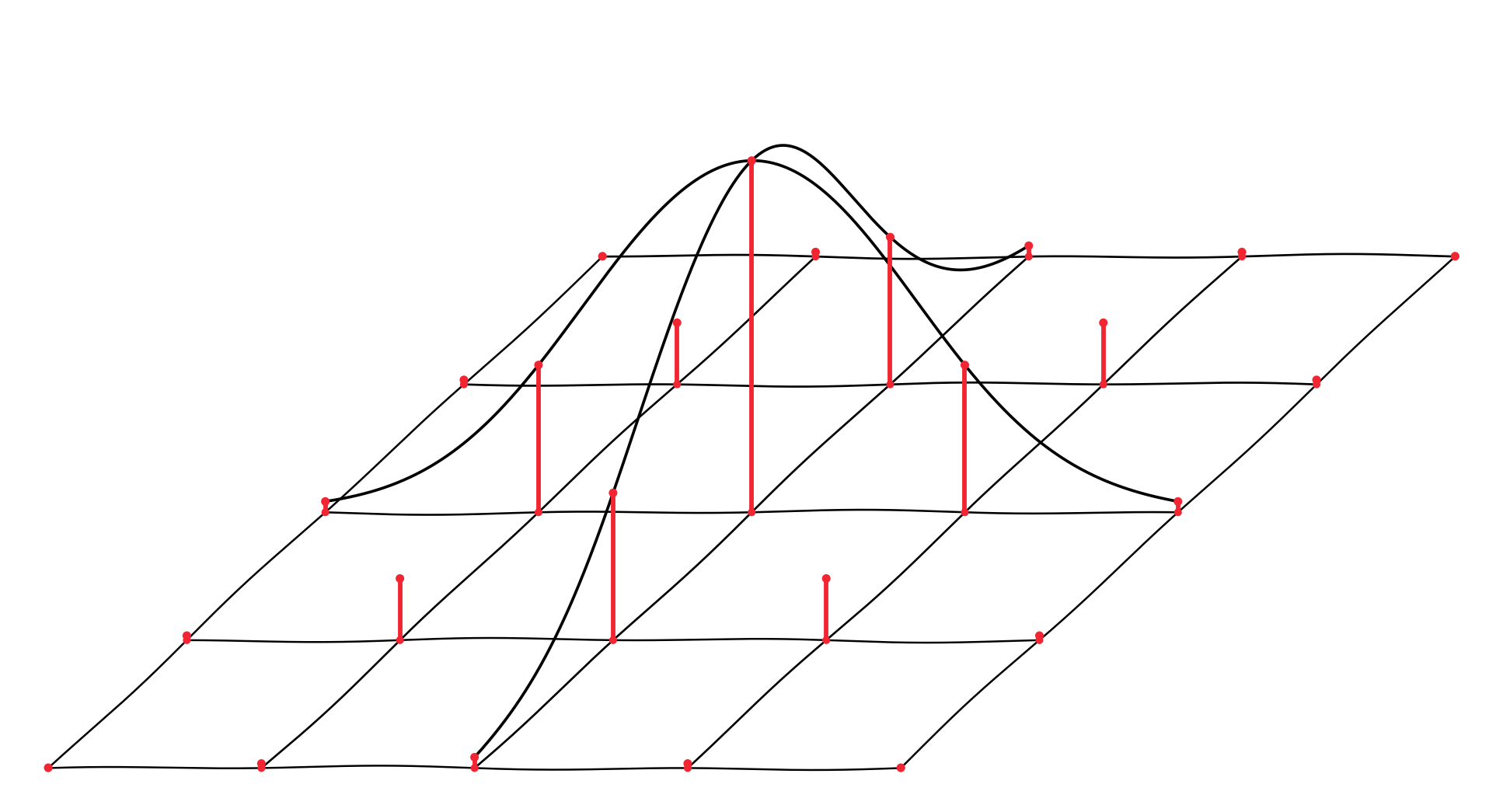}
    \caption{Gaussian masses on a lattice $\Lambda \subseteq \RR^2$.
    Each red segment extends from a lattice point $\lambda \in \Lambda$ to $\rho_s(\lambda)=e^{-\pi \norm{\lambda}_2^2/s^2}$.}
\end{figure}

The reverse Minkowski theorem is thus stated for a broader class than stable lattices: we say that a lattice $\Lambda \subseteq \RR^n$ is \textit{weakly stable}\footnote{The property of being stable is an established term in literature, but weak stability is not as far as we are aware.} if $\det \Gamma \ge 1$ for all sublattices $\Gamma \subseteq \Lambda$.
Of course every stable lattice is weakly stable. It is known that the sum in \cref{eq:1.1} can only be of constant order for $0 < s \le O(1/\sqrt{\log n})$, as can be shown by considering the case when $\Lambda = \ZZ^n$, so this is the sharp dependence on $n$ for $s$.

We should explain that the published work \cite{RegevStephensDavidowitz2024} uses a weaker $s = \Theta(1/\log n)$ parameter arising from the  classical $MM^*$-estimate due to Figiel and Tomczak-Jaegermann~\cite{FigielTomczakJaegermann1979}, Lewis~\cite{LewisEllipsoids1979} and Pisier~\cite{PisierKConvexity1982}.
Recent years have brought many breakthroughs in convex geometry by Chen~\cite{Chen2021}, Guan~\cite{guan2024notebourgainsslicingproblem},
Klartag and Lehec~\cite{KlartagLehecProofOfSlicingConjectureGAFA2025} and others, leading among other insights to a resolution of Bourgain's Slicing Conjecture. A result of this progress is the following
which can be found as a direct result of \cite{Bizeul2026,PaourisPathak2026}:
\begin{center}
  \textit{For any convex body $K \subseteq \RR^n$ with $\Vol_n(K)=1$, there is a volume-preserving affine map $T : \RR^n \to \RR^n$ so that $\gamma_n( C\sqrt{\log n} \cdot T(K)) \ge \frac{1}{2}$.}
\end{center}
Here $\gamma_n$ is the standard Gaussian measure and $C>0$ is a universal constant. Plugging this into the framework of \cite{RegevStephensDavidowitz2024} immediately gives \cref{eq:1.1} for $s=\Theta(1/\sqrt{\log n})$. Our main result, \cref{thm:1.1}, constructs a strongly log-concave probability measure on $\RR^n$ whose image modulo the lattice is uniform. We call this the \textit{log-concave lattice lift}. This lift allows Gaussian comparison principles to be applied to lattice problems. For example, Harg\'e's theorem~\cite{Harge2004} controls expectations of gauges and gives us immediate covering-radius bounds. Also, immediately the Bakry--\'Emery criterion~\cite{BakryGentilLedoux2014} gives functional inequalities for the uniform measure on $\RR^n/\Lambda$. Finally, Milman's theorem~\cite{Milman2018}, gives lower bounds for every eigenvalue on the torus. Summing these eigenvalues yields the reverse Minkowski theorem. These applications use different consequences of the log-concave lattice lift, we hope that many more can be found in future works.

\subsection{Our results}
By a slight abuse of notation, we write $\rho_s(S)=\sum_{x\in S}\rho_s(x)$ for the Gaussian mass of a discrete set $S$. For $\alpha > 0$, we say that a smooth function $f : \RR^n \to (0, \infty)$ is \textit{$\alpha$-strongly log-concave} if
\[
    \nabla^2(- \log f)(x) \succcurlyeq \alpha \cdot \Id, \quad \text{for every }x \in \RR^n.
\]
We now state our main result, \cref{thm:1.1}.
\begin{theorem}[Existence of the log-concave lattice lift]\label{thm:1.1}\label{thm:MainResult}
Let $\Lambda \subseteq \RR^n$ be a full-rank lattice such that $\Lambda^*$ is weakly stable. Then there exists a symmetric random vector $X \in \RR^n$ with a smooth, strictly positive density $f : \RR^n \to (0, \infty)$ satisfying that
\begin{enumerate}
    \item $f$ is $\alpha$-strongly log-concave, where $\alpha = 2\pi/e$.
    \item $X + \Lambda$ is uniform on $\RR^n/\Lambda$, i.e., $\sum_{\lambda \in \Lambda} f(x+\lambda) = \frac{1}{\det \Lambda}$ for all $x \in \RR^n$.
\end{enumerate}
As a converse, for any lattice $\Lambda$ for which such a density $f$ exists,  $ \sqrt{\frac{2\pi}{\alpha}} \cdot \Lambda^*$ is in fact weakly stable.
\end{theorem}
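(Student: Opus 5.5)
The converse is a Poisson summation argument, and I would do it first; the existence part is where the real work lies. By Poisson summation, condition (2) says exactly that $\hat f(y)=0$ for all $0\neq y\in\Lambda^*$, since the periodization $\sum_{\lambda}f(x+\lambda)$ has Fourier coefficients $\frac{1}{\det\Lambda}\hat f(y)$, $y\in\Lambda^*$.

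\emph{The converse.} Let $f$ be symmetric, $\alpha$-strongly log-concave, with $\hat f$ vanishing on $\Lambda^*\setminus\{0\}$. Take a sublattice $\Gamma\subseteq\Lambda^*$ of rank $k$ and put $E=\spn_{\RR}\Gamma$.
\begin{enumerate}
\item Let $g$ be the marginal of $f$ on $E$. By Prékopa--Leindler applied to $f(x)e^{\alpha\norm{x}_2^2/2}$, the density $g$ is again symmetric and $\alpha$-strongly log-concave on $E$.
\item We have $\hat g=\hat f|_E$, so $\hat g$ vanishes on $\Gamma\setminus\{0\}$. Poisson summation in $E$ then gives $\sum_{\mu\in\Gamma^*}g(x+\mu)=\det\Gamma$, where $\Gamma^*$ is the dual of $\Gamma$ inside $E$.
\item By symmetry and log-concavity, $0$ is a maximizer of $g$ and $\nabla g(0)=0$. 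Strong log-concavity therefore gives $g(x)\le g(0)e^{-\alpha\norm{x}_2^2/2}$. Integrating yields $g(0)\ge(\alpha/2\pi)^{k/2}$.
\end{enumerate}
Combining, $\det\Gamma=\sum_{\mu}g(\mu)\ge g(0)\ge(\alpha/2\pi)^{k/2}$. Hence $\det\bigl(\sqrt{2\pi/\alpha}\,\Gamma\bigr)\ge1$, which is the converse.

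\emph{Existence.} We must produce $f=e^{-\alpha\norm{x}_2^2/2}\,h$ with $h$ smooth, symmetric and log-concave, and with $\hat f=0$ on $\Lambda^*\setminus\{0\}$. The factor $e$ in $\alpha=2\pi/e$ suggests an entropy-based construction, so I would use convex duality, in the spirit of Barthe's characterization of Brascamp--Lieb data. The order of steps is as follows.
\begin{enumerate}
\item Reduce to $\Lambda^*$ stable. Use the canonical filtration of $\Lambda^*$. For a weakly stable $\Lambda^*$ all normalized determinants satisfy $r_i\ge1$, and scaling a quotient up only relaxes the required curvature. I would try to glue lifts for the graded pieces by a triangular change of variables. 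This step also needs care, since strong log-concavity is not obviously preserved under such gluing.
\item For stable $\Lambda^*$, set up a variational problem over log-concave lifts, for instance maximizing an entropy minus a quadratic penalty among symmetric lifts of the uniform measure on $\RR^n/\Lambda$. Then show that the optimizer has Hessian of $-\log f$ at least $\alpha\,\Id$.
\item Do this via the dual problem, where the dual variables are $\Lambda$-periodic potentials. Non-existence would produce, through a Hahn--Banach separation, a periodic test function concentrating on a rational subspace.
\item Translate that degeneration into a sublattice $\Gamma\subseteq\Lambda^*$ with $\det\Gamma<1$, which contradicts stability.
\end{enumerate}
This is modeled on how semistability and the Kempf--Ness / Harder--Narasimhan picture characterize existence of balanced (isotropic) positions.

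\emph{Main obstacle.} The hard step is the existence for stable lattices, specifically obtaining the sharp curvature $2\pi/e$ rather than merely some constant. I would first try a Gaussian-type ansatz. Take $f$ to be the density of $\Pi(Z)$, where $Z$ is Gaussian and $\Pi$ is an optimal-transport-type map chosen so that $\Pi(Z)\bmod\Lambda$ is uniform. Then use Caffarelli's contraction theorem in reverse, comparing $f$ against $N(0,\tfrac{e}{2\pi}\Id)$, whose entropy $\frac n2\log\bigl(2\pi e\cdot\tfrac{e}{2\pi}\bigr)=n$ matches the normalization $\det\Gamma\ge1$ exactly. That equality is the evidence that $e$ arises from an entropy-versus-volume balance; the stability hypothesis would be used, subspace by subspace, to verify that no direction forces the curvature below $2\pi/e$.
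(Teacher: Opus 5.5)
Your converse is correct and is essentially the paper's argument. The paper works with primitive sublattices $\Lambda^*\cap E$, using $(\Pi_E\Lambda)^*=\Lambda^*\cap E$ and $f_E(0)\le 1/\det(\Pi_E\Lambda)$; your Fourier formulation handles arbitrary sublattices at once.

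\textbf{The gap is in existence.} Nothing in your steps 2--4, nor in the transport ansatz, produces a lower bound on $\nabla^2(-\log f)$, and that bound is the whole theorem. Your variational problem can in fact be solved explicitly. Minimizing relative entropy with respect to a Gaussian proportional to $\rho_A$, subject to the pushforward to $\RR^n/\Lambda$ being uniform, forces $f=\rho_A\, e^{\phi}$ with $\phi$ a $\Lambda$-periodic multiplier. The constraint then pins down $f(x)=\rho_A(x)/\bigl(\det\Lambda\cdot\rho_A(x+\Lambda)\bigr)$, which is exactly the paper's ansatz. So existence of an optimizer is never in question, and a Hahn--Banach degeneration argument targets the wrong thing: the entire difficulty is the curvature of this explicit function. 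Likewise, Caffarelli's theorem does not run in reverse. A Gaussian pushed forward by a map chosen only to make the quotient uniform need not be log-concave at all.

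\textbf{What is missing.} There are three ingredients.
\begin{enumerate}
\item The identity $\nabla^2(-\log f)(x)=4\pi^2A^{-1}\Cov(D_{x+\Lambda,A})A^{-1}$, where $D_{x+\Lambda,A}$ is the discrete Gaussian on the coset.
\item The comparison $\Cov(D_{x+\Lambda,A})\succcurlyeq\Cov(D_{\Lambda,A})$ for every $x$. The paper gets it by taking the Hessian at $y=0$ of the lattice parallelogram inequality $\rho_A(x+\Lambda)^2\rho_A(y+\Lambda)^2\le\rho_A(\Lambda)^2\rho_A(x+y+\Lambda)\rho_A(x-y+\Lambda)$.
\item A choice of $A$ with $A^{-1}\Cov(D_{\Lambda,A})A^{-1}\succcurlyeq\frac{1}{2\pi e}\Id$. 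The paper maximizes $\log\rho_A(\Lambda)/\tr(A)$ over all nonzero $A\succcurlyeq0$, singular ones included. At a maximizer, the first-order condition makes $A^+\Cov(D_{\Lambda\cap E,A})A^+$ equal to $\frac{1}{\pi}$ times the optimal value times $\Id_E$ on $E=\im(A)$. Testing $A=e\Id$, with $\rho_{\sqrt e}(\Lambda)>e^{n/2}/\det\Lambda\ge e^{n/2}$, shows the optimal value exceeds $\frac{1}{2e}$. This is where $e$ comes from ($\max_{t>0}\frac{\log t}{2t}=\frac{1}{2e}$), not from an entropy balance.
\end{enumerate}

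Your reduction to stable $\Lambda^*$ is also unnecessary, and the gluing worry goes away. The paper uses only $\det\Lambda\le1$. When the maximizer is singular, $E$ is a lattice subspace of $\Lambda$, and the dual $\Lambda^*\cap E^\perp$ of $\Pi_{E^\perp}\Lambda$ is weakly stable, so induction applies. The orthogonal product of a lift for $\Lambda\cap E$ and a lift for $\Pi_{E^\perp}\Lambda$ is then a lift for $\Lambda$ with block-diagonal Hessian, so no triangular change of variables is needed.
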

Thus \cref{thm:1.1} characterizes lattices with weakly stable duals, up to a universal dilation, by the existence of such densities $f$. We now demonstrate some implications of \cref{thm:1.1}. The \textit{covering radius} of a full-rank lattice $\Lambda \subseteq \RR^n$ with respect to a convex body $K \subseteq \RR^n$ is defined through
\[
\mu(\Lambda, K) = \min\{r \ge 0 : \Lambda + rK = \RR^n\}
\]
where $\norm{x}_K = \inf\{r > 0 : x \in rK\}$ is the Minkowski functional of $K$.
\begin{theorem}\label{thm:CoveringRadius}
    Let $\Lambda \subseteq \RR^n$ be a full-rank lattice so that $\Lambda^*$ is weakly stable and let $K \subseteq \RR^n$ be a convex body with $0\in\intr(K)$. Then
    \[
    \mu(\Lambda, K) \le \sqrt{\frac{2e}{\pi}} \cdot \E\, \norm{G}_K,
    \]
    where $G \sim N(0, \Id)$ is a standard Gaussian random vector in $\RR^n$.
\end{theorem}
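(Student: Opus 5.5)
We derive \cref{thm:CoveringRadius} from \cref{thm:1.1} combined with Harg\'e's comparison theorem. Let $X$ be the symmetric random vector produced by \cref{thm:1.1}, with $\alpha$-strongly log-concave density $f$, $\alpha = 2\pi/e$, such that $X + \Lambda$ is uniform on $\RR^n/\Lambda$. The argument has three steps: an averaging step that bounds $\mu(\Lambda,K)$ by $\E\norm{X}_K$, a Gaussian comparison step, and a rescaling.

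\textbf{Averaging step.} Fix $x \in \RR^n$. We want some $\lambda\in\Lambda$ with $\norm{x-\lambda}_K \le \E\norm{X}_K$. Since $X+\Lambda$ is uniform on the torus, so is $X + x + \Lambda$. Hence the coset of $x$ arises as the coset of $X$ after a uniform translation, and it is cleaner to average over cosets. Define the periodic function
\[
h(y) = \min_{\lambda\in\Lambda}\norm{y-\lambda}_K .
\]
For every $y$ we have $h(y)\le\norm{y}_K$. Taking expectations and using uniformity,
\[
\E\, h(X) = \frac{1}{\det\Lambda}\int_{\RR^n/\Lambda} h \le \E\norm{X}_K .
\]
This only controls the average of $h$, whereas $\mu(\Lambda,K)=\max h$, so we need a second step. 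We use the standard trick that $h$ satisfies a triangle inequality: $h(a+b)\le h(a)+h(b)$ by convexity of $\norm{\cdot}_K$, and $h(-y)=\norm{\cdot}_{-K}$-type. For any $x$ and uniform $U$ on the torus,
\[
h(x) \le h(x-U) + h(U)\quad\text{(as } \norm{\cdot}_K \text{ is subadditive)}.
\]
We can write $x-U = x + (-U)$ with $-U$ also uniform. Symmetry of $X$ gives $-X\stackrel{d}{=}X$, so $-U$ can be realized as $-X+\Lambda$. Taking expectations then gives
\[
\mu(\Lambda,K)\le 2\,\E\norm{X}_K .
\]
So we get the bound with an extra factor of $2$. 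To absorb this factor we apply the argument to $\tfrac12$-scaled sets: $\mu(\Lambda,K)\le \mu(\Lambda,\tfrac{K}{2}\cap\ldots)$ does not help directly. Instead, we use the sharper sumset version $\mu(\Lambda, K) \le \mu(\Lambda, \tfrac{K-K}{2})$-type argument: $x\in X_1 - X_2 + \Lambda$ with $X_1,X_2$ independent copies (uniformity of the difference). Then $h(x)\le \E\norm{X_1 - X_2}_K$ in the appropriate coupling. The difference $(X_1-X_2)/\sqrt2$ is again $\alpha$-strongly log-concave, since the convolution of strongly log-concave densities remains strongly log-concave with constant $\alpha$ after $\sqrt2$ rescaling. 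This gives
\[
\mu\le\sqrt2\,\E\norm{X'}_K
\]
for an $\alpha$-strongly log-concave symmetric $X'$. The constant bookkeeping in this step is the main obstacle, and the target constant $\sqrt{2e/\pi}=\sqrt2\cdot\sqrt{e/\pi}$ is consistent with exactly this $\sqrt2$ loss.

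\textbf{Comparison step.} Harg\'e's theorem says that if $Y$ has density $e^{-V}\gamma$-type with $\nabla^2V\succcurlyeq\alpha\Id$ and the same mean as $Z\sim N(0,\alpha^{-1}\Id)$, then $\E g(Y)\le\E g(Z)$ for every convex $g$. We apply it to the symmetric (mean-zero) $X'$ and the convex function $g=\norm{\cdot}_K$. This yields
\[
\E\norm{X'}_K\le \alpha^{-1/2}\,\E\norm{G}_K=\sqrt{e/(2\pi)}\,\E\norm{G}_K .
\]

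\textbf{Combining.} Putting the two steps together gives $\mu\le\sqrt2\sqrt{e/(2\pi)}\,\E\norm G_K=\sqrt{e/\pi}\,\E\norm{G}_K$, which is even better than the stated bound. In case the sumset coupling fails, we fall back on the simpler factor-$2$ argument, which gives $2\sqrt{e/(2\pi)}=\sqrt{2e/\pi}$, exactly the claimed constant. We therefore present the factor-$2$ argument together with Harg\'e's theorem. The step needing care is justifying $h(x)\le \E[h(x+X)]+\E[h(X)]$ rigorously, where $x+X+\Lambda$ and $-X+\Lambda$ are both uniform.
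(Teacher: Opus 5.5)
Your committed argument, the factor-$2$ route, is correct and is the paper's proof. It uses subadditivity $h(x)\le h(x-X)+h(X)$, the identity $\E\,h(x-X)=\E\,h(X)$ from uniformity of $x-X+\Lambda$, the bound $h\le\norm{\cdot}_K$, and Harg\'e's theorem for the centered $\alpha$-strongly log-concave $X$. Together these give $\mu(\Lambda,K)\le 2\alpha^{-1/2}\,\E\,\norm{G}_K=\sqrt{2e/\pi}\,\E\,\norm{G}_K$.

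However, you should drop the ``sumset'' improvement and the claim $\mu(\Lambda,K)\le\sqrt{e/\pi}\,\E\,\norm{G}_K$, because the argument for it does not work. A pointwise bound on $h(x)$ at a fixed $x$ requires a coupling with $X_1-X_2\in x+\Lambda$ almost surely. Then $X_1-X_2$ is supported on the discrete set $x+\Lambda$, so it has no density and is certainly not $(\alpha/2)$-strongly log-concave. Such a dependent coupling (e.g.\ the one in the paper's subgaussian-rounding proof) only yields $\E\,\norm{X_1-X_2}_K\le 2\,\E\,\norm{X}_K$ via the triangle inequality, i.e.\ the same factor $2$. The convolution fact you invoke holds only for independent copies. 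For those, $X_1-X_2+\Lambda$ is uniform on the torus, so you again control only the average of $h$, not its maximum.
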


As an application of this result, we obtain "flatness bounds". For a full rank lattice $\Lambda \subseteq \RR^n$, a convex body $K \subseteq \RR^n$ and a nonzero subspace $W \subseteq \RR^n$ for which $\Pi_W\Lambda$ is a lattice, we define the ratio
\[
R_W(\Lambda,K) = \left(\frac{\det(\Pi_W\Lambda)}
    {\Vol_{\dim W}(\Pi_WK)}\right)^{1/\dim W}.
\]
Kannan and Lov\'asz~\cite{KannanLovasz1988} showed that
$R_W(\Lambda,K)$ gives a lower bound on the covering radius. That is, $\mu(\Lambda,K) \ge R_W(\Lambda,K)$ for any such $W$.
We define the \textit{subspace flatness constant} $C_{\KL}(n)$ as the smallest number such that, for every convex body $K\subseteq\RR^n$ and full-rank lattice $\Lambda$,
\[
    \mu(\Lambda,K)\le C_{\KL}(n) \cdot \sup_W R_W(\Lambda,K)
\]
where the supremum is over
nonzero subspaces $W$ for which $\Pi_W\Lambda$ is a lattice.
Kannan and Lov\'asz~\cite{KannanLovasz1988} showed that $C_{\KL}(n) \le  n$ which was improved to  $C_{\KL}(n) \le  O(\log^3 n)$ by \cite{ReisRothvossFOCS23} (which would reduce to $C_{\KL}(n) \le  O(\log^{5/2} n)$ by applying the $O(\sqrt{\log n})$ bound for Reverse Minkowski).
Making use of Theorem~\ref{thm:CoveringRadius} instead, the bound improves further: 
\begin{corollary}\label{cor:SubspaceFlatness} One has that
$C_{\KL}(n)=O(\log^{3/2}n)$.
\end{corollary}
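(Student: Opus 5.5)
The plan is to reduce the general case to the case where $\Lambda^*$ is weakly stable, using the stable filtration, and then invoke \cref{thm:CoveringRadius} on each quotient. The $O(\log^{3/2} n)$ bound should come from $O(\log n)$ levels of a dyadic decomposition, each costing $O(\sqrt{\log n})$ from the Gaussian estimate.

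\textbf{Normalization and filtration.} Both sides of the desired inequality scale the same way under linear maps, so fix the affine image of $K$ in which $\E\,\norm{G}_K$ is essentially minimal relative to volume. By the slicing-type result quoted in the introduction, there is a volume-preserving linear $T$ with $\gamma_n(C\sqrt{\log n}\,T(K)) \ge \frac12$. Applying $T$ to both $K$ and $\Lambda$, we may assume $\Vol_n(K)=1$ and $\gamma_n(C\sqrt{\log n}\,K)\ge \frac12$. Since $K$ is convex, Borell's lemma then gives $\E\,\norm{G}_K = O(\sqrt{\log n})$, and the same holds for projections $\Pi_W K$ onto subspaces, because projection only increases Gaussian measure.

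Next take the canonical filtration of $\Lambda^*$, namely $0=\Gamma_0\subsetneq\cdots\subsetneq\Gamma_k=\Lambda^*$ with slopes $r_1<\cdots<r_k$. Dualizing gives a filtration of $\Lambda$ by primitive sublattices $\Lambda_i=\Lambda\cap \Gamma_{k-i}^{\perp}$. Each successive quotient (projected onto the appropriate subspace) has a dual that, after scaling by $r_j$, is stable. Covering radii are subadditive along such a filtration: $\mu(\Lambda,K)\le \sum_i \mu(\Pi_{W_i}\Lambda_i', \Pi_{W_i} K)$, where $\Lambda_i'$ is the relevant quotient lattice living in the subspace $W_i$.

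\textbf{Applying \cref{thm:CoveringRadius} to each piece.} For the $i$-th quotient, scale so that its dual is stable and apply \cref{thm:CoveringRadius} in dimension $d_i$. This bounds that term by $\frac{1}{r}\sqrt{2e/\pi}\,\E\,\norm{G_{d_i}}_{\Pi K}$, with $r$ the corresponding slope. Here $1/r$ equals $\det(\text{quotient})^{1/d_i}$, which should be compared with $R_W(\Lambda,K)$ for suitable $W$.

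Summing naively over $k$ pieces could lose a factor $n$. Instead, group the slopes dyadically: merge consecutive pieces whose slopes lie within a factor $2$ into one block. A block of weakly stable-type pieces with slopes within a factor $2$ has dual which, after scaling, is weakly stable up to a constant, so \cref{thm:CoveringRadius} applies to each block with constant loss.

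To bound $\E\,\norm{G}_{\Pi_W K}$, use the reverse-Santal\'o/$\ell$-position machinery restricted to $W$ together with $\Vol(\Pi_W K)$. For the block, choose the position of $\Pi_W K$ inside $W$ (the lattice being transformed accordingly) so that $\E\,\norm{G}_{\Pi_W K}\lesssim \sqrt{\log n}\,\Vol(\Pi_W K)^{-1/\dim W}$. Block slopes then decay geometrically, which allows us to keep only $O(\log n)$ blocks: any blocks beyond those contribute a geometric tail bounded by the largest.

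This gives $\mu\lesssim \sum_{\text{blocks}} \sqrt{\log n}\,R_{W}(\Lambda,K)\le O(\log n)\cdot\sqrt{\log n}\cdot\sup_W R_W$.

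\textbf{Main obstacle.} The step I expect to be hardest is making the per-block normalization compatible. Re-positioning $\Pi_W K$ within each $W$ separately changes the lattice, and the filtration structure must survive that change. It must also be checked that the number of effective blocks is genuinely $O(\log n)$ rather than $O(n)$. I would first try the approach of \cite{ReisRothvossFOCS23}: insert \cref{thm:CoveringRadius} in place of their reverse-Minkowski-based covering bound, which upgrades their per-level loss from $O(\log^2 n)$ to $O(\sqrt{\log n})$ while keeping their $O(\log n)$ levels.
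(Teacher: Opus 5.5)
Your proposal has a genuine gap. The ingredients are right. Dyadically merging the canonical filtration, so that block slopes satisfy $r_j\le r_{j+2}/2$ and each block quotient $L_j$ has $2r_jL_j^*$ weakly stable, is exactly the well-separated $2$-stable filtration the paper uses. Applying \cref{thm:CoveringRadius} block by block with an $O(\sqrt{\log n})$ Gaussian mean is also what the paper does. What fails is the step converting these bounds into $R_W$-bounds, and the place you put the factor $\log n$.

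\emph{Per-block re-positioning is not available.} \cref{thm:CoveringRadius} is not linearly invariant. It needs the dual lattice to be weakly stable in the same coordinates in which $\E\,\norm{G}$ is computed, and weak stability is destroyed by non-orthogonal linear maps.

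\emph{The block count is not $O(\log n)$.} There can be order $n$ blocks, and geometric growth of slopes does not reduce them to $O(\log n)$. What it actually gives (as in the paper, $\sum_{j\le i}r_j\le 4r_i$) is that, in one fixed position, the sum of the block covering radii is $O(r_{\mathrm{top}}\,\E\,\norm{G}_K)$.

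\emph{The real obstruction is the volume term.} In a fixed isotropic position the only projection bound is $\Vol_d(\Pi_WK)^{1/d}\le Cn/d$. This is $O(1)$ only when $d\gtrsim n$. So comparing $r_{\mathrm{top}}$ with $R_W$ for the top block's possibly low-dimensional $W$ loses a factor $n/d$.

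The missing idea is the half-dimensional cut plus recursion, which is \cref{lem:HalfDimensionalCovering}:
\begin{enumerate}
\item Put $K$ in isotropic position once and take the well-separated filtration of $\Lambda$ in those coordinates.
\item Stop at the first index $i$ with $\dim E_i\ge n/2$. Then $W=E_{i-1}^\perp$ has dimension $d>n/2$, so $\Vol_d(\Pi_WK)^{1/d}=O(1)$. Also $\det(\Pi_W\Lambda)^{1/d}\ge r_i$, since all slopes from $i$ on are at least $r_i$. Hence $\mu(\Lambda\cap E_i,K\cap E_i)\lesssim r_i\sqrt{\log n}\lesssim\sqrt{\log n}\,R_W(\Lambda,K)$.
\item With $P=\Pi_{E^\perp}$, the inequality $\mu(\Lambda,K)\le\mu(\Lambda\cap E,(K-a)\cap E)+\mu(P\Lambda,PK)$ decouples the two parts. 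You may therefore re-position $PK$ from scratch inside $E^\perp$, which has dimension at most $n/2$, and recompute the filtration there.
\item Since $R_W(P\Lambda,PK)=R_W(\Lambda,K)$ for $W\subseteq E^\perp$, at most $1+\log_2n$ halvings give $O(\log^{3/2}n)$.
\end{enumerate}
So re-positioning is legitimate once per recursion level, not once per block, and the $\log n$ comes from halving dimension, not from slopes.

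Two smaller points.
\begin{itemize}
\item Subadditivity along a filtration must use the bodies $K_j=\Pi_{E_{j-1}^\perp}(K\cap E_j)$, i.e.\ sections followed by projections. With pure projections $\Pi_{W_i}K$ the inequality is false, for example for $\ZZ^2$ and a thin body along the diagonal. In a fixed position, Jensen then gives $\E\,\norm{G_j}_{K_j}\le\E\,\norm{G}_K$.
\item Borell's lemma does not turn $\gamma_n(C\sqrt{\log n}\,K)\ge\frac12$ into a bound on $\E\,\norm{G}_K$ for non-symmetric $K$. This is why the paper instead invokes Bizeul's bound for isotropic $K$ directly.
\end{itemize}
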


A classic quantity studied in the geometry of numbers is the 
\textit{flatness constant} $\Flt(n)$ in dimension $n$ (see e.g.~\cite{BanaszczykLitvakPajorSzarek1999}) which is defined as
\[
\begin{aligned}
  \Flt(n)&=\sup_{\Lambda,K}w_\Lambda(K),\\
  w_\Lambda(K)&=\min_{\lambda^*\in\Lambda^*\setminus\{0\}}
      \max_{x,y\in K}\inner{\lambda^*}{x-y}.
\end{aligned}
\]
Here $w_\Lambda(K)$ is the \textit{lattice width} of $K$ with respect to
$\Lambda$, and the supremum is over convex bodies $K\subseteq\RR^n$ and full-rank lattices $\Lambda$ such that $\intr(K)\cap\Lambda=\varnothing$. Equivalently, for any pair $K,\Lambda$ with $\Lambda \cap \intr(K) = \varnothing$ there exists a direction $\lambda^* \in \Lambda^* \setminus \{ 0\}$ in which $K$ is ``thin'', i.e. $\max_{x,y \in K} \inner{\lambda^*}{x-y} \le \Flt(n)$. We note that $\Flt(n)$ corresponds to the subspace flatness constant if subspaces were restricted to $\dim W=1$.
The previously best known bound is $\Flt(n) \le O(n \log n)$  by Bizeul~\cite{Bizeul2026} which built off that of Banaszczyk, Litvak, Pajor and Szarek~\cite{BanaszczykLitvakPajorSzarek1999}; the earlier work of \cite{ReisRothvossFOCS23,ReisRothvoss2026} gave $\Flt(n) \le O(n \log^2 n)$.
Here we prove:
\begin{corollary}\label{cor:Flatness} One has that
$\Flt(n)=O(n \sqrt{\log n})$.
\end{corollary}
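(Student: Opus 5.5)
The plan is to reduce to a lattice whose dual is weakly stable, apply \cref{thm:CoveringRadius} twice (once to the lattice, once to its dual), and combine the two Gaussian estimates with an $MM^*$-type inequality. Fix a convex body $K$ and a full-rank lattice $\Lambda$ with $\Lambda\cap\intr(K)=\varnothing$.

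First I will show $\mu(\Lambda,K)\ge 1$. If $\mu(\Lambda,K)<1$, pick $c\in\intr(K)$. Every point $x$ lies in $\Lambda+\mu(\Lambda,K)(K-c)$, so $c\in\lambda+\mu(K-c)$ for some $\lambda\in\Lambda$. Then $-\lambda\in c-\mu(K-c)$, and one needs to turn this into a lattice point in $\intr(K)$. I expect to do this with the standard argument for covering radius and interior lattice points, replacing $K$ by $K-c$ and using convexity. By projecting, $\mu(\Pi_W\Lambda,\Pi_WK)\ge 1$ for every subspace $W$ for which $\Pi_W\Lambda$ is a lattice.

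Next, take the stable filtration of $\Lambda^*$. Let $\Lambda^*_1$ be its first (densest) piece, of rank $k$ and normalized determinant $r_1$, and set $W=\spn_\RR\Lambda_1^*$, so that $(\Lambda_1^*)^*=\Pi_W\Lambda$. Put $\Lambda'=r_1^{-1}\Lambda_1^*$. Then $\Lambda'$ is stable, and I will use the standard fact that the dual of a stable lattice is stable. Hence both $\Lambda'$ and $(\Lambda')^*=r_1\Pi_W\Lambda$ satisfy the hypotheses of \cref{thm:CoveringRadius} inside $W\cong\RR^k$. Write $c_0=\sqrt{2e/\pi}$.
\begin{enumerate}
\item Applying \cref{thm:CoveringRadius} to $r_1\Pi_W\Lambda$ and $r_1\Pi_WK$ gives $1\le\mu(\Pi_W\Lambda,\Pi_WK)\le c_0 r_1^{-1}\E\norm{G}_{\Pi_WK}$. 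I will use this after symmetrizing, with $\Pi_W(K-K)$ in place of $\Pi_WK$, which only helps up to the factor $2$.
\item For the width, any $y\in\Lambda_1^*\setminus\{0\}\subseteq\Lambda^*$ satisfies $\max_{x,z\in K}\inner{y}{x-z}=\norm{y}_{L}$ with $L=(\Pi_W(K-K))^\circ\subseteq W$. Using $\lambda_1\le 2\mu$ for symmetric bodies and then \cref{thm:CoveringRadius} for $\Lambda'$, we get $w_\Lambda(K)\le r_1\lambda_1(\Lambda',L)\le 2c_0\,r_1\,\E\norm{G}_L$.
\end{enumerate}
Multiplying the two bounds eliminates $r_1$ and gives
\[
w_\Lambda(K)\ \lesssim\ \E\norm{G}_{\Pi_W(K-K)}\cdot\E\norm{G}_{(\Pi_W(K-K))^\circ},
\]
which is an $M M^*$ product in dimension $k\le n$. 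With the standard normalization, the product $\E\norm{G}_{D}\,\E\norm{G}_{D^\circ}$ for symmetric $D$ is $k\cdot MM^*(D)$. So it suffices to make $MM^*(D)=O(\sqrt{\log k})$ for $D=\Pi_W(K-K)$. The Bizeul and Paouris--Pathak estimate quoted in the introduction gives exactly this kind of $\sqrt{\log n}$ control, in place of the classical $\log n$ of Figiel--Tomczak-Jaegermann, Lewis and Pisier.

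The main obstacle is that the $MM^*$ estimate only holds after putting $D$ in a good linear position $T$, while the lattice is not free to move: applying $T$ to $\Lambda'$ destroys stability. My first attempt will be to notice that the two Gaussian estimates above are the only places where the position enters, and that \cref{thm:CoveringRadius} is really a consequence of \cref{thm:MainResult} plus Harg\'e's comparison. I would rerun that argument with the Gaussian $N(0,TT^{\top})$ and the strongly log-concave lift of $T$-transformed lattices. Concretely, I would apply \cref{thm:MainResult} to $\Lambda'$ and $(\Lambda')^*$ and then push forward by $T$ and $T^{-\top}$. These two maps are adjoint-inverse to each other, so the covariances $TT^\top$ and $(TT^\top)^{-1}$ pair correctly with $D$ and $D^\circ$. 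If this works, the product bound becomes $k\cdot MM^*(TD)$ for the optimal $T$. That yields $w_\Lambda(K)=O(k\sqrt{\log k})=O(n\sqrt{\log n})$, which is \cref{cor:Flatness}. If the $T$-twisted comparison fails, the fallback is to replace the single subspace $W$ by the whole filtration and induct on the pieces.
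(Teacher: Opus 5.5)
\textbf{Gap 1: the first inequality is false.} Projection can only decrease covering radii, $\mu(\Pi_W\Lambda,\Pi_WK)\le\mu(\Lambda,K)$, so $\mu(\Lambda,K)\ge1$ gives no lower bound on $\mu(\Pi_W\Lambda,\Pi_WK)$. Your $W=\spn_{\RR}\Lambda_1^*$ depends on the lattice alone, so $K$ can be long along $W$ and thin only outside it. For example, take $\Lambda=\mathrm{diag}(2,\tfrac12)\ZZ^2$. The densest piece of $\Lambda^*=\mathrm{diag}(\tfrac12,2)\ZZ^2$ is $\ZZ(\tfrac12,0)$, so $W=\RR e_1$. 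The body $K=[-N,N]\times[\tfrac18,\tfrac38]$ is lattice-free, yet $\mu(\Pi_W\Lambda,\Pi_WK)=\mu(2\ZZ,[-N,N])=1/N$. Restricting the width to directions in $\Lambda_1^*$ is valid but useless here, because the thin direction $(0,2)\in\Lambda^*$ is not in $W$.

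\textbf{The symmetrization step is also unjustified.} \cref{thm:CoveringRadius} bounds $\mu(\cdot,\Pi_WK)$ by $\E\norm{G}_{\Pi_WK-a}$. This quantity dominates $\E\norm{G}_{\Pi_W(K-K)}$, and the ratio is not $O(1)$: for a regular simplex it is at least of order $\sqrt{\log n}$ for every choice of $a$. This is precisely the non-symmetric difficulty in flatness.

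\textbf{Gap 2: the position step fails, and your fix is vacuous.} We have $\mu(T\Gamma,TD)=\mu(\Gamma,D)$ and $\E\norm{TX}_{TD}=\E\norm{X}_D$. So pushing the lifts forward by $T$ and $T^{-\mathsf T}$ returns exactly $\E\norm{G}_D\cdot\E\norm{G}_{D^\circ}$ in the coordinates where $\Lambda'$ is stable. That position is forced by the lattice, and the product can be arbitrarily large there. For $\Lambda=\ZZ^2$ and the same $K$, it is of order $N$, although $\inf_T MM^*(TD)=O(1)$.

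\textbf{The $\sqrt{\log n}$ estimate you cite is not an $MM^*$ bound.} The result quoted in the introduction is a one-sided $M$-estimate; in the proof it appears as $\E\norm{G}_K\le C\sqrt{\log n}$ for isotropic $K$ (Bizeul). No $O(\sqrt{\log n})$ bound on $MM^*$ is available. The classical $O(\log n)$ bound would at best reproduce $\Flt(n)=O(n\log n)$.

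\textbf{What the paper does instead.} The missing idea is to never estimate the polar body by a Gaussian.
\begin{enumerate}
\item The paper first puts $K$ in isotropic position, moving $\Lambda$ along with it.
\item Only then does it take a well-separated $2$-stable filtration of the transformed $\Lambda$ (not of $\Lambda^*$).
\item Applying \cref{thm:CoveringRadius} to each $L_j/(2r_j)$ gives $\mu(\Lambda_i,K\cap E)\le Cr_i\sqrt{\log n}$.
\item This is matched by the Kannan--Lov\'asz ratio bound $R_W(\Lambda,K)\ge r_i/C$, obtained from determinants plus Rogers--Shephard.
\item Recursing on quotients of at most half the dimension gives $\mu(\Lambda,K)\le C\sqrt{\log n}\sum_jR_{W_j}(\Lambda,K)$.
\item The width enters only through $w_\Lambda(K)R_{W_j}(\Lambda,K)\le C\dim W_j$. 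Together with $\sum_j\dim W_j\le2n$ and $\mu(\Lambda,K)\ge1$, this finishes the proof.
\end{enumerate}
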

We note that the best known lower bound is $\Flt(n) \ge (2-o(1))n$ due to Mayrhofer, Schade and Weltge~\cite{LatticeFreeSimplex-Arxiv.2111.08483}. Recall that a centered random vector $X \in \RR^n$ is called \textit{$s$-subgaussian} if
\[
    \E\, e^{\inner{\xi}{X}} \le e^{s^2 \norm{\xi}_2^2/2}, \quad \text{for every }\xi \in \RR^n.
  \]
For a probability measure $\mu$, the \textit{logarithmic Sobolev constant}
$C_{\LS}(\mu)$ is the least $C\ge0$ such that
\[
    \Ent_{\mu}(g^2) \le 2C \cdot \int \norm{\nabla g}_2^2\,d\mu
\]
for all smooth functions $g$ for which the expressions are finite;
we set $C_{\LS}(\mu)=+\infty$ if no finite $C$ exists.
By \cite[Corollary~5.7.2 and Proposition~5.4.1]{BakryGentilLedoux2014},
the law $\mu$ of an $\alpha$-strongly log-concave random vector satisfies
$C_{\LS}(\mu)\le1/\alpha$, and the vector is
$\alpha^{-1/2}$-subgaussian about its mean.
In particular, the random vector in \cref{thm:1.1} is $\sqrt{e/(2\pi)}$-subgaussian. We thus obtain immediately

\begin{corollary}\label{cor:MainResultII}
    Let $\Lambda \subseteq \RR^n$ be a full-rank lattice such that
    $\Lambda^*$ is weakly stable. Then the random vector $X$ from \cref{thm:1.1} is $\sqrt{\frac{e}{2\pi}}$-subgaussian, and moreover the uniform measure $\nu$ on $\RR^n/\Lambda$ satisfies the following log-Sobolev inequality:
    \[
    \Ent_{\nu}(g^2) \le \frac{e}{\pi} \cdot \int_{\RR^n/\Lambda} \norm{\nabla g}_2^2\,d\nu, \quad \text{for every }g \in C^{\infty}(\RR^n/\Lambda).
\]
\end{corollary}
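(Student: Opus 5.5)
The plan is to derive both claims of \cref{cor:MainResultII} from \cref{thm:1.1}, using the two facts quoted just before the statement: an $\alpha$-strongly log-concave law $\mu$ has $C_{\LS}(\mu)\le 1/\alpha$ and is $\alpha^{-1/2}$-subgaussian about its mean. The only work is to check centering and to pass the log-Sobolev inequality through the quotient map.

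\textbf{Subgaussianity.} Let $X$ be the vector from \cref{thm:1.1}. It is symmetric, and its density has Gaussian tails by strong log-concavity, so $\E X$ exists and equals $0$. The subgaussian fact therefore applies about $0$ with parameter $\alpha^{-1/2}=\sqrt{e/(2\pi)}$, since $\alpha=2\pi/e$.

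\textbf{Log-Sobolev on the torus.} Let $\mu$ be the law of $X$, so $C_{\LS}(\mu)\le e/(2\pi)$. Let $\pi:\RR^n\to\RR^n/\Lambda$ be the quotient map. By property (2) of \cref{thm:1.1}, the pushforward $\pi_\#\mu$ is exactly the uniform measure $\nu$.

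Take $g\in C^\infty(\RR^n/\Lambda)$ and set $\tilde g=g\circ\pi$. Then $\tilde g$ is smooth, bounded and $\Lambda$-periodic, and $\nabla\tilde g(x)=(\nabla g)(\pi(x))$ because $\pi$ is a local isometry. Changing variables under the pushforward gives
\[
\int \tilde g^2\log\tilde g^2\,d\mu=\int g^2\log g^2\,d\nu,\qquad \int\tilde g^2\,d\mu=\int g^2\,d\nu,
\]
so $\Ent_\mu(\tilde g^2)=\Ent_\nu(g^2)$. Likewise
\[
\int\norm{\nabla\tilde g}_2^2\,d\mu=\int\norm{\nabla g}_2^2\,d\nu.
\]
All these quantities are finite, since $g$ and $\nabla g$ are bounded on the compact torus. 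Applying the log-Sobolev inequality for $\mu$ to $\tilde g$ then yields
\[
\Ent_\nu(g^2)\le 2\cdot\frac{e}{2\pi}\int\norm{\nabla g}_2^2\,d\nu=\frac{e}{\pi}\int\norm{\nabla g}_2^2\,d\nu.
\]

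\textbf{Remaining check.} There is no real obstacle. The one point to verify is that the quoted log-Sobolev inequality applies to $\tilde g$, which is bounded with bounded gradient but not compactly supported. This is covered by the stated class of smooth functions with finite expressions. If a compactly supported version is required instead, truncate $\tilde g$ by cutoffs $\tilde g\,\chi(\cdot/R)$ and let $R\to\infty$; dominated convergence applies because $\mu$ has Gaussian tails.
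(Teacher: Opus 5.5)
Your proposal is correct and follows the same route as the paper. Subgaussianity comes from the $\alpha$-strongly log-concave bound with $\alpha=2\pi/e$, and the log-Sobolev inequality is transferred to the torus by pulling $g$ back to $g\circ\pi$ and using $\pi_\#\mu=\nu$ to identify both the entropy and the Dirichlet energy, together with $C_{\LS}(\mu)\le 1/\alpha$. Your extra checks (centering by symmetry, finiteness of the expressions, and the optional cutoff argument) are details the paper leaves implicit.
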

\begin{proof}
    The subgaussian conclusion follows from the preceding bound with
    $\alpha=2\pi/e$. Let $\mu$ be the law of $X$, let
    $\pi(x)=x+\Lambda$, and suppose that
    $g\in C^\infty(\RR^n/\Lambda)$. We have
    \[
    \begin{aligned}
        \Ent_\nu(g^2)
        &=\Ent_\mu((g \circ \pi)^2)\\
        &\le\frac{2}{\alpha}
            \int_{\RR^n}\norm{\nabla(g \circ \pi)}_2^2\,d\mu\\
        &=\frac{e}{\pi}
            \int_{\RR^n/\Lambda}\norm{\nabla g}_2^2\,d\nu.
    \end{aligned}
    \]
    This proves the claim.
\end{proof}
Related and recent connections between the geometry of lattice fundamental
domains and functional inequalities on $\RR^n/\Lambda$ were studied by
Milman~\cite{Milman2026} who obtained spectral-gap bounds in terms of the covariance of a fundamental domain. \cref{thm:CoveringRadius} has a related formulation in terms of subgaussian distributions, connected to
the discrepancy theory work of Dadush, Garg, Lovett and Nikolov~\cite{DadushGargLovettNikolov2019}.
\begin{theorem}\label{thm:SubgaussianDistrForStableLattice}
    Let $\Lambda \subseteq \RR^n$ be a full-rank lattice so that $\Lambda^*$ is weakly stable and let $x \in \RR^n$. Then there exists a random vector $Y \in \Lambda$ so that $\E\, Y = x$, and $Y - x$ is $\sqrt{2e/\pi}$-subgaussian.
\end{theorem}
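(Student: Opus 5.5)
Given $x$, we use the lift from \cref{thm:1.1} to build the distribution of $Y$. Let $X$ be the symmetric random vector with density $f$, which is $\alpha$-strongly log-concave with $\alpha=2\pi/e$ and satisfies $\sum_{\lambda\in\Lambda}f(z+\lambda)=1/\det\Lambda$ for every $z$.

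\textbf{Construction.} Set
\[
\PP(Y=\lambda)=\det(\Lambda)\, f(x-\lambda), \qquad \lambda\in\Lambda .
\]
By the periodization identity of \cref{thm:1.1}(2) at the point $x$ (using that $\Lambda=-\Lambda$), these weights sum to $1$, so this is a probability distribution on $\Lambda$. Write $Z=Y-x$. Then $Z$ has law proportional to $f(-z)=f(z)$ restricted to the coset $-x+\Lambda$. In other words, $Z$ is the distribution of $X$ conditioned on the event $X\in -x+\Lambda$. Equivalently, $Z$ is a discrete sample of $X$ on a shifted lattice.

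\textbf{Mean.} We need $\E Z=0$, but symmetry of $f$ does not give this, since the coset $-x+\Lambda$ is not symmetric. Instead, differentiate the periodization identity $\sum_{\lambda}f(z+\lambda)\equiv 1/\det\Lambda$ in $z$ to get $\sum_\lambda\nabla f(z+\lambda)=0$. By itself this does not yield $\sum_\lambda \lambda f(z+\lambda)=0$. A cleaner route is to take the expectation of the gradient of $-\log f$ under the conditional law. This also fails in general, because $\E[\nabla(-\log f)(Z)]\neq A\,\E Z$ for a non-Gaussian $f$.

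So we modify the construction with an exponential tilt. For $\theta\in\RR^n$, let $Y_\theta$ have weights proportional to $f(x-\lambda)e^{\inner{\theta}{\lambda-x}}$. The log-partition function $\psi(\theta)=\log\sum_\lambda f(x-\lambda)e^{\inner{\theta}{\lambda-x}}$ is finite and strictly convex, because $f$ has Gaussian decay by strong log-concavity. Its gradient is $\E Y_\theta - x$. If we show $\psi$ is coercive, then $\psi$ has a minimizer $\theta^*$, and at $\theta^*$ the mean condition holds: $\E Y_{\theta^*}=x$.

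\textbf{Subgaussianity.} Fix the tilt $\theta^*$. The law of $Z=Y-x$ is proportional to $g(z)=f(-z)e^{\inner{\theta^*}{z}}$ on the discrete set $-x+\Lambda$, and $g$ is still $\alpha$-strongly log-concave. The key claim is a discrete transfer: if $g$ is $\alpha$-strongly log-concave, then the law proportional to $g$ on any shifted lattice, recentred at its mean, is $\alpha^{-1/2}$-subgaussian. The approach is to compare moment generating functions. Write $\E e^{\inner{\xi}{Z}}=\frac{\sum g(z)e^{\inner{\xi}{z}}}{\sum g(z)}$. Strong log-concavity gives the pointwise bound
\[
g(z)e^{\inner{\xi}{z}}\le g(z-\xi/\alpha)\,e^{\norm{\xi}^2/(2\alpha)}\cdot e^{\inner{\nabla\log g(z-\xi/\alpha)\cdots}{}}
\]
up to a linear correction term, which we would choose to absorb. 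The shift $z\mapsto z-\xi/\alpha$ moves us to a different coset, so we need the coset sums $\sum g$ to be comparable across cosets. Exact comparability is precisely what uniformity provides. This is why we would apply the bound with $f$ itself rather than a tilted version, and handle the tilt by using the invariance $\sum_\lambda f(z+\lambda)=\mathrm{const}$ for every $z$.

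\textbf{Expected obstacle.} This last step is the crux: combining the tilt needed for the mean condition with the coset-independence of the partition sum. The alternative I would try first is to avoid tilting altogether and prove directly that $\E[Z]=0$ fails only mildly, then correct it. The constant $\sqrt{2e/\pi}=\sqrt2\cdot\alpha^{-1/2}$ suggests exactly that approach. We would obtain $Y'=x+Z-Z'$, or a symmetrization using two independent coset samples, to gain symmetry at the cost of a factor $\sqrt2$. Concretely, take independent $X_1,X_2$ distributed as $f$, and condition on $X_1-X_2\in x+\Lambda$. The pair has density $f\otimes f$, which is $\alpha$-strongly log-concave on $\RR^{2n}$, and the difference is a symmetric linear image with variance factor $2$. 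This yields mean zero by symmetry and the $\sqrt2$ loss, matching the constant in the statement.
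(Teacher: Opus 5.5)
As written, your proposal does not reach a proof. The step you identify as the crux is not just hard; it is false in the generality you state it. Take $n=1$, $\Lambda=\ZZ$ and $g(z)=e^{-\alpha z^2/2}$. The law proportional to $g$ on $\ZZ+\tfrac12$ is symmetric and supported on $\{\abs{z}\ge\tfrac12\}$, so its variance is at least $\tfrac14>1/\alpha$ once $\alpha>4$, and it cannot be $\alpha^{-1/2}$-subgaussian.

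Your final symmetrization is, once made precise, the right object. But both reasons you give for why it works fail. \emph{Mean zero does not follow from symmetry.} The symmetries available are swapping $X_1,X_2$ and negating both, using that $f$ is even. Of these, only the combination $(X_1,X_2)\mapsto(-X_2,-X_1)$ preserves the constraint $X_1-X_2\in x+\Lambda$, and it fixes $X_1-X_2$, so it says nothing about its mean. \emph{Strong log-concavity of $f\otimes f$ does not survive conditioning on the null event $X_1-X_2\in x+\Lambda$.} The conditioned law of $X_1-X_2$ is the $(\alpha/2)$-strongly log-concave density $f*f$ restricted to a lattice coset, which is the same discrete transfer problem again. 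There is also an arithmetic slip: $\sqrt2\,\alpha^{-1/2}=\sqrt{e/\pi}$, whereas $\sqrt{2e/\pi}=2\alpha^{-1/2}$, so the constants do not match as claimed.

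The missing idea is that uniformity of $X+\Lambda$ pins down the \emph{marginals} of this coupling, and marginals are all you need. Write $\mu=f\,dx$ and draw $X\sim\mu$. Given $X$, draw $Z$ on the coset $x+X+\Lambda$ with $\PP(Z=x+X+\lambda)=\det(\Lambda)\,f(x+X+\lambda)$. This is your conditioning made precise: it is $f\otimes f$ disintegrated along the sheets $\{z_2-z_1\in x+\Lambda\}$, with $(X,Z)=(X_2,X_1)$. Since $x+X+\Lambda$ is uniform, integrating over a fundamental domain shows $Z\sim\mu$. So $X$ and $Z$ are dependent, but both have law $\mu$. Set $Y=x+X-Z\in\Lambda$. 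Then $\E Y=x$ because $\E X=\E Z$. For the tail bound, apply Cauchy--Schwarz and \cref{lem:ConcentrationForAlphaLogConcave}(iii) to each marginal: $\E e^{\inner{u}{Y-x}}\le\bigl(\E e^{\inner{2u}{X}}\,\E e^{-\inner{2u}{Z}}\bigr)^{1/2}\le e^{2\norm{u}_2^2/\alpha}$. This gives parameter $2/\sqrt{\alpha}=\sqrt{2e/\pi}$. This is the paper's proof, and it needs no subgaussian estimate for any discrete law.
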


As a consequence of \cref{thm:1.1}, we obtain an alternative proof of the reverse Minkowski theorem.
\begin{theorem} \label{thm:1.2}\label{thm:ReverseMinkowskiWithTightConstants}\label{thm:ReverseMinkowski}
    Let $\Lambda \subseteq \RR^n$ be a weakly stable lattice. Then for all $0 < s \le \left(1/\sqrt{2e}+ o(1)\right)\cdot\frac{1}{\sqrt{\log n}}$ one has that
    \[
    \sum_{\lambda \in \Lambda} \rho_s(\lambda) \le \frac{3}{2}.
    \]
\end{theorem}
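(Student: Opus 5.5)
Apply \cref{thm:1.1} to the lattice $L = \Lambda^*$, whose dual $L^* = \Lambda$ is weakly stable by assumption, and then control the theta series $\rho_s(\Lambda)$ through the spectrum of the flat torus $\RR^n/L$. The Laplacian on $\RR^n/L$ has eigenfunctions $e^{2\pi i\inner{\lambda}{x}}$ for $\lambda \in L^* = \Lambda$, with eigenvalues $4\pi^2\norm{\lambda}_2^2$. Hence the ordered eigenvalues $0=\mu_0\le \mu_1\le \mu_2\le\cdots$ of the torus are exactly the numbers $4\pi^2\norm{\lambda}^2$ over $\lambda\in\Lambda$, listed with multiplicity, and
\[
\rho_s(\Lambda)=\sum_{k\ge 0} e^{-\mu_k/(4\pi s^2)}.
\]
So it suffices to find lower bounds on each $\mu_k$.

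By \cref{thm:1.1}, the uniform measure on $\RR^n/L$ is the pushforward, under the quotient map, of an $\alpha$-strongly log-concave measure on $\RR^n$, with $\alpha=2\pi/e$. For the Rayleigh quotients this pushforward behaves like a contraction: if $g$ is a function on the torus, then $g\circ\pi$ has the same $L^2$ norm and the same Dirichlet energy with respect to $\mu$ as $g$ has with respect to $\nu$. By min-max, the $k$-th eigenvalue of the torus is therefore at least the $k$-th Neumann eigenvalue of the weighted Laplacian for $\mu$ on $\RR^n$. I then invoke Milman's comparison theorem \cite{Milman2018}. It says that for an $\alpha$-strongly log-concave measure, every ordered eigenvalue dominates the corresponding eigenvalue of the Gaussian $N(0,\alpha^{-1}\Id)$. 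Those Gaussian eigenvalues are $\alpha m$, where $m = m_1+\cdots+m_n$ ranges over multi-indices in $\ZZ_{\ge0}^n$. The value $\alpha m$ occurs with multiplicity $\binom{m+n-1}{n-1}$.

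The conclusion is that the $k$-th smallest value of $4\pi^2\norm{\lambda}^2$ is at least the $k$-th smallest Gaussian eigenvalue. Summing, and using that the exponential is decreasing in the eigenvalue, gives
\[
\rho_s(\Lambda)\le \sum_{m\ge0}\binom{m+n-1}{n-1}e^{-\alpha m/(4\pi s^2)}=\bigl(1-q\bigr)^{-n},\qquad q=e^{-1/(2e s^2)}.
\]
To make this at most $3/2$, I need $nq \lesssim \log(3/2)$, which means $1/(2es^2)\ge \log n + O(1)$. Equivalently,
\[
s\le \frac{1}{\sqrt{2e}}\cdot\frac{1}{\sqrt{\log n+O(1)}}=\left(\frac{1}{\sqrt{2e}}+o(1)\right)\frac{1}{\sqrt{\log n}}.
\]
This matches the claimed constant, which confirms that the parametrization is right.

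\textbf{Main obstacle.} The delicate step is the eigenvalue transfer from the torus to $\RR^n$, with multiplicities handled correctly, together with making sure Milman's theorem applies in exactly this form. The comparison needs the ordered-spectrum version, dominating all eigenvalues of the Gaussian $N(0,\alpha^{-1}\Id)$, and not only the spectral gap. It also needs the min-max argument to be carried out on the space of lifted functions $g\circ\pi$, which must be checked to lie in $L^2(\mu)$ with finite energy. Since $g$ is bounded and smooth and $\mu$ has Gaussian tails, this is routine. The final algebraic step, summing against multiplicities and choosing $s$, is straightforward.
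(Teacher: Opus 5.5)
Your proposal is correct and follows the paper's proof essentially step for step. You apply \cref{thm:1.1} to $\Lambda^*$, pull back the torus eigenfunctions to get $\lambda_k(L_V)\le\lambda_k(L_{\Lambda^*})$ by min--max, invoke Milman's ordered-spectrum comparison with the Ornstein--Uhlenbeck operator at $\alpha=2\pi/e$, and sum $e^{-t\lambda_k}$ to reach $\rho_s(\Lambda)\le(1-e^{-1/(2es^2)})^{-n}$. The paper makes your last step explicit by taking $s\le 1/\sqrt{2e\log(3n)}$, so that $q\le 1/(3n)$ and $(1-1/(3n))^{-n}\le 3/2$.
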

The dependence $1/\sqrt{\log n}$ is sharp, as follows by considering $\Lambda=\ZZ^n$.

\subsection{AI Disclosure}\label{subsec:AIdisclosure}
The authors used ChatGPT 5.6 Pro and ChatGPT 6 Pro in the development of this work.

\section{Preliminaries} \label{sec:Preliminaries}

\subsection{General notation}\label{subsec:1.2}
Before discussing the proof, let us fix the notation we will use.
All logarithms are natural unless a base is displayed.
We use $\subseteq$ for inclusion and $\subsetneq$ for proper inclusion.
We write $\inner{x}{y}$ for the Euclidean inner product on $\RR^n$,
$\norm{x}_2=\sqrt{\inner{x}{x}}$ for the Euclidean norm, and
$B_2^n=\{x\in\RR^n:\norm{x}_2\le1\}$ for the Euclidean unit ball.
The notation $\spn_{\RR}S$ denotes the real linear span of $S$, while
$\spn_{\ZZ}S$ denotes its integer span.

For a linear subspace $E\subseteq\RR^n$, we write $E^\perp$ for its
orthogonal complement and $\Pi_E$ for orthogonal projection onto $E$.
The identity on $\RR^n$ is denoted by $\Id$, and the identity on $E$ by
$\Id_E$. When regarded as an operator on $\RR^n$, an operator on $E$
is extended by zero on $E^\perp$; in particular, the extension of
$\Id_E$ is $\Pi_E$.
For a matrix $M$, the expressions $M^{\mathsf T}$, $\tr(M)$, and
$\norm{M}_{\mathrm{op}}$ denote its transpose, trace, and Euclidean
operator norm, respectively. For symmetric matrices $A$ and $A'$, the
\emph{Loewner (positive semidefinite) order} is defined by
\[
    A\preccurlyeq A'
    \quad\Longleftrightarrow\quad
    \inner{v}{Av}\le\inner{v}{A'v},
    \quad\text{for every }v\in\RR^n.
\]
For $A\succcurlyeq0$, the notation $A^{1/2}$ denotes its unique symmetric
positive semidefinite square root. Its \emph{Moore--Penrose pseudoinverse}
$A^+$ is the symmetric matrix satisfying
$\im(A^+)=\im(A)$ and $AA^+=A^+A=\Pi_{\im(A)}$.
We write $\Vol_k$ for $k$-dimensional Lebesgue measure and $\intr(K)$
for the interior of a set $K$ in its ambient space.

We denote probability and expectation by $\PP$ and $\E$, and covariance
by $\Cov$. The notation $X\sim\mu$ means that $X$ has law $\mu$;
for a law with finite second moments, $\Cov(\mu)=\Cov(X)$.
We write $U\sim\Unif(F)$ when $U$ is uniform on a measurable set $F$
of finite, positive volume.
The standard Gaussian measure is $\gamma_n=N(0,\Id)$.
For $\alpha>0$, we write $\gamma_{n,\alpha}=N(0,\alpha^{-1}\Id)$,
so $\gamma_n=\gamma_{n,1}$.

For $A\succ0$, we write $\rho_A(x)=e^{-\pi\inner{x}{A^{-1}x}}$
for the unnormalized Gaussian weight with matrix parameter $A$.
The scalar-width convention is $\rho_s=\rho_{s^2\Id}$ for $s>0$;
we abbreviate $\rho_1=\rho_{\Id}$ by $\rho$.
The normalized continuous Gaussian proportional to $\rho_A$ has
covariance $A/(2\pi)$. For a discrete set $S$, we write
$\rho_A(S)=\sum_{y\in S}\rho_A(y)$, with the same convention for
$\rho_s(S)$ and $\rho(S)$. The corresponding discrete probability
law is denoted by $D_{x+\Lambda,A}$ and is defined in Section~\ref{sec:3}.
For a smooth function $f:\RR^n\to\RR$, the symbols $\nabla f$ and
$\nabla^2 f$ denote its gradient and Hessian.

\subsection{Concentration}
The following properties are classical and well-known. We state them for completeness.
\begin{lemma}\label{lem:ConcentrationForAlphaLogConcave}
Let $\alpha>0$ and let $X$ have an $\alpha$-strongly log-concave probability density $f:\RR^n\to (0, \infty)$, with law $\mu=f\,dx$.
The following hold, with the additional assumption $\E\,X=0$ in (iii)--(v).
\begin{enumerate}
\item[(i)] \textnormal{(Bakry--\'Emery criterion~\cite[Corollary~5.7.2]{BakryGentilLedoux2014})}
For every smooth $g:\RR^n\to\RR$ for which the expressions are finite, the logarithmic Sobolev inequality holds:
\[
    \Ent_\mu(g^2)\le\frac{2}{\alpha}\cdot\E\,\norm{\nabla g(X)}_2^2.
\]
\item[(ii)] \textnormal{(Poincar\'e inequality~\cite[Corollary~4.8.2]{BakryGentilLedoux2014})}
For every smooth $g:\RR^n\to\RR$ for which the expressions are finite,
\[
    \Var(g(X))\le\frac{1}{\alpha}\cdot\E\,\norm{\nabla g(X)}_2^2.
\]
\item[(iii)] \textnormal{(Subgaussianity~\cite[Eq~(5.4.1)]{BakryGentilLedoux2014})}
The random vector $X$ is $\alpha^{-1/2}$-subgaussian:
\[
    \E\,e^{\inner{X}{u}}\le e^{\norm{u}_2^2/(2\alpha)},
    \quad\text{for every }u\in\RR^n.
\]
\item[(iv)] \textnormal{(Moment bounds)}
For every $u\in\RR^n$ with $\norm{u}_2=1$,
\[
    \left(\E\,\abs{\inner{X}{u}}^p\right)^{1/p}\le\sqrt{\frac{p}{\alpha}},
    \quad\text{for }p\ge1.
\]
Moreover,
\[
    \E\,\abs{\inner{X}{u}}^{2p}\le2\left(\frac{2}{\alpha}\right)^p\cdot p!,
    \quad\text{for every integer }p\ge1.
\]
\item[(v)] \textnormal{(Harg\'e's theorem~\cite{Harge2004})}
If $G\sim N(0,\Id)$ and $K\subseteq\RR^n$ is a convex body with $0\in\intr(K)$, then
\[
    \E\,\norm{X}_K\le\frac{1}{\sqrt{\alpha}}\cdot\E\,\norm{G}_K.
\]
\end{enumerate}
\end{lemma}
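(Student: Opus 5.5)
These are classical facts, and the plan is to cite or derive each one quickly from the strong log-concavity hypothesis $\nabla^2(-\log f)\succcurlyeq\alpha\Id$.

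For (i), we invoke the Bakry--\'Emery criterion directly. The measure $\mu=e^{-V}dx$ with $V=-\log f$ satisfies the curvature-dimension condition $CD(\alpha,\infty)$ for the Langevin diffusion with generator $L=\Delta-\inner{\nabla V}{\nabla}$, since $\Gamma_2(g)=\norm{\nabla^2 g}_{HS}^2+\inner{\nabla g}{\nabla^2V\nabla g}\ge\alpha\,\Gamma(g)$. The log-Sobolev inequality with constant $1/\alpha$ then follows as in \cite[Corollary~5.7.2]{BakryGentilLedoux2014}. Part (ii) follows from (i) by linearization: apply (i) to $g=1+\varepsilon h$ and let $\varepsilon\to0$. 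Both $\Ent_\mu((1+\varepsilon h)^2)$ and $2\Var(\varepsilon h)$ agree to second order in $\varepsilon$, which gives the Poincar\'e inequality with constant $1/\alpha$. Alternatively, we cite \cite[Corollary~4.8.2]{BakryGentilLedoux2014}.

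For (iii), we use the Herbst argument. Put $H(t)=\E\,e^{t\inner{X}{u}}$ and apply (i) to $g=e^{t\inner{X}{u}/2}$. This yields $tH'(t)-H(t)\log H(t)\le \frac{t^2\norm{u}_2^2}{2\alpha}H(t)$. Equivalently, $\frac{d}{dt}\bigl(\frac{1}{t}\log H(t)\bigr)\le\frac{\norm{u}_2^2}{2\alpha}$. Since $\frac1t\log H(t)\to\E\inner{X}{u}=0$ as $t\to0$, integrating gives $H(1)\le e^{\norm{u}_2^2/(2\alpha)}$. The only technical point is integrability of the exponential moments. This holds because $f\le Ce^{-\alpha\norm{x}_2^2/4}$ for large $\norm{x}_2$, by strong log-concavity together with the fact that $f$ is integrable.

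For (iv), we take $Y=\inner{X}{u}$, which by (iii) satisfies $\E e^{\lambda Y}\le e^{\lambda^2/(2\alpha)}$ for all real $\lambda$. Chernoff then gives $\PP(\abs{Y}\ge t)\le 2e^{-\alpha t^2/2}$. For the even moments,
\[
\E Y^{2p}=\int_0^\infty 2p\,t^{2p-1}\PP(\abs{Y}\ge t)\,dt\le 2\int_0^\infty 2p\,t^{2p-1}e^{-\alpha t^2/2}dt=2\left(\frac{2}{\alpha}\right)^p p!,
\]
which is the second claim. For the first claim, the bound $\sqrt{p/\alpha}$ (without a constant prefactor) is too tight to get from tails alone. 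Instead we use that the one-dimensional marginal of an $\alpha$-strongly log-concave law is itself $\alpha$-strongly log-concave. This follows from Pr\'ekopa--Leindler applied to $f(x)e^{\alpha\norm{x}_2^2/2}$, which is log-concave. By Caffarelli contraction in one dimension, or equivalently a direct comparison of moments of a log-concave perturbation of a Gaussian, the marginal's centered moments are dominated by those of $N(0,1/\alpha)$ up to the mean shift, which is zero here. Then $\E\abs{Y}^p\le\E\abs{g}^p\alpha^{-p/2}$ for $g\sim N(0,1)$, and the standard Gaussian estimate $(\E\abs{g}^p)^{1/p}\le\sqrt{p}$ for $p\ge1$ finishes the claim. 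The step I expect to be the main obstacle is justifying this Gaussian moment domination for centered strongly log-concave laws. If the contraction route is delicate when the marginal is not centered at its mode, I would fall back to Harg\'e's theorem, which is (v), applied to the convex function $\abs{\inner{x}{u}}^p$.

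For (v), we cite Harg\'e \cite{Harge2004}. If $X$ has density $e^{-\varphi}\,d\gamma_{n,\alpha}$ with $\varphi$ convex and $\E X=0$, then $\E h(X)\le\E h(G/\sqrt{\alpha})$ for every convex $h$. Applying this with $h=\norm{\cdot}_K$, which is convex since $K$ is a convex body containing $0$ in its interior, and using positive homogeneity of $\norm{\cdot}_K$ yields the bound.
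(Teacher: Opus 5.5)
Your proposal is correct and follows the paper, which gives no argument for this lemma beyond citing Bakry--Gentil--Ledoux for (i)--(iii) and Harg\'e for (v); your Bakry--\'Emery, linearization and Herbst derivations are exactly the content of those citations. For (iv), which the paper states without reference, your tail integration gives the second bound as written, and the first bound follows cleanly from your fallback: apply the general convex-function form of Harg\'e's theorem to $x\mapsto\abs{\inner{x}{u}}^p$ and use $(\E\abs{g}^p)^{1/p}\le\sqrt{p}$, which avoids the median-versus-mean issue you rightly flagged in the contraction route.
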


\subsection{Lattices}
Recall that a lattice $\Lambda \subseteq \RR^n$ is a discrete subgroup of $\RR^n$.
As explained in the introduction, $\Lambda$ admits a \textit{basis}, a set of linearly independent vectors $b_1, \ldots, b_k \in \RR^n$
satisfying that 
\[
    \Lambda = \spn_{\ZZ}\{b_1, \ldots, b_k\}.
\]
We call $k$ the \textit{rank} of $\Lambda$, and we say that $\Lambda$ has \textit{full rank} when $k = n$. When $\Lambda$ has rank $k$, we identify $\spn_{\RR}\Lambda$ with $\RR^k$ using orthonormal coordinates, and
unless otherwise stated, we will always assume that $\Lambda$ has full rank. A \textit{sublattice} of $\Lambda$ is an additive subgroup
$\Gamma\subseteq\Lambda$. Since $\Lambda$ is discrete, so is $\Gamma$, and thus $\Gamma$ is a lattice. When $\Lambda$ is a full rank lattice, we may define its \textit{determinant} as stated above
through the following formula, which is independent of a chosen basis $b_1, \ldots, b_n$:
\[
    \det \Lambda = \abs{\det(b_1, \ldots, b_n)}.
\]
The determinant of a rank $k$ sublattice $\Gamma \subseteq \Lambda$ is defined similarly (in the natural way) under identifying $\spn_{\RR} \Gamma$ with $\RR^k$.
Given a linear subspace $E\subseteq\RR^n$, the intersection
$\Lambda\cap E$ is always a sublattice of $\Lambda$.
We call $E$ a \textit{lattice subspace} of $\Lambda$ if
$\spn_{\RR}(\Lambda\cap E)=E$. Like the case for intersections with subspaces, the projection $\Pi_E\Lambda$ is an additive subgroup of $E$, however, it
may fail to be discrete and therefore may fail to be a lattice.
The following lemma characterizes when $\Pi_E\Lambda$ is a lattice.
\begin{lemma}{(\cite[Lemma~1.1]{KannanLovasz1988})}\label{lem:2.1}
Let $\Lambda\subseteq\RR^n$ be a full-rank lattice and let
$E\subseteq\RR^n$ be a subspace. The following are equivalent:
\begin{enumerate}
    \item[(i)] $\Pi_E\Lambda$ is a lattice in $E$.
    \item[(ii)] $E^\perp$ is a lattice subspace of $\Lambda$.
\end{enumerate}
\end{lemma}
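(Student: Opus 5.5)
The plan is to prove both directions by studying the restriction of $\Pi_E$ to $\Lambda$, viewed as a homomorphism of free abelian groups $\Lambda\cong\ZZ^n\to\Pi_E\Lambda$. Its kernel is exactly $\Lambda\cap E^\perp$. Set $m=\dim E$ and $k=n-m=\dim E^\perp$. The key numerical fact is that $E^\perp$ is a lattice subspace if and only if the kernel $\Lambda\cap E^\perp$ has rank $k$.

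\emph{(i)$\Rightarrow$(ii).} Suppose $\Pi_E\Lambda$ is a lattice in $E$.
\begin{enumerate}
\item Since $\Lambda$ spans $\RR^n$, the image $\Pi_E\Lambda$ spans $E$. Being discrete, it therefore has a basis of exactly $m$ vectors, so $\Pi_E\Lambda\cong\ZZ^m$ as an abstract group.
\item The map $\Pi_E|_\Lambda:\ZZ^n\to\ZZ^m$ is then a surjection of free abelian groups, so it splits, and $\Lambda\cong(\Lambda\cap E^\perp)\oplus\ZZ^m$.
\item Comparing ranks gives $\rank(\Lambda\cap E^\perp)=n-m=k$.
\item The group $\Lambda\cap E^\perp$ consists of linearly independent-spanning lattice vectors inside the $k$-dimensional space $E^\perp$ and has rank $k$, so its real span is all of $E^\perp$. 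Hence $E^\perp$ is a lattice subspace.
\end{enumerate}

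\emph{(ii)$\Rightarrow$(i).} Suppose $\spn_{\RR}(\Lambda\cap E^\perp)=E^\perp$.
\begin{enumerate}
\item The sublattice $\Gamma=\Lambda\cap E^\perp$ is primitive and has rank $k$.
\item Use the standard fact that a basis $b_1,\dots,b_k$ of a primitive sublattice extends to a basis $b_1,\dots,b_n$ of $\Lambda$. One way to see this: $\Lambda/\Gamma$ is torsion-free, since $jv\in E^\perp$ forces $v\in E^\perp$, hence it is free, and lifting a basis of it completes the basis.
\item Then $\Pi_E\Lambda=\spn_{\ZZ}\{\Pi_Eb_{k+1},\dots,\Pi_Eb_n\}$, because $\Pi_E b_i=0$ for $i\le k$.
\item These $n-k=m$ vectors span $\Pi_E\RR^n=E$, which has dimension $m$, so they are linearly independent.
\item The integer span of linearly independent vectors is discrete, so $\Pi_E\Lambda$ is a lattice in $E$.
\end{enumerate}

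The only step needing real care is the algebraic input in each direction. In the first direction it is that a surjection onto a free abelian group splits, which yields the rank count for the kernel. In the second it is the basis-extension property for primitive sublattices. Both are standard facts about finitely generated abelian groups, so I expect no genuine obstacle. The most delicate point is remembering, in the first direction, to use that $\Pi_E\Lambda$ spans $E$, so that its rank is exactly $m$ rather than merely at most $m$.
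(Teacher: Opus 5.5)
The paper does not prove this lemma; it only cites Kannan and Lov\'asz~\cite{KannanLovasz1988}, so there is no in-paper argument to compare with. Your proposal is correct and self-contained.

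In (i)$\Rightarrow$(ii), the exact sequence $0\to\Lambda\cap E^\perp\to\Lambda\to\Pi_E\Lambda\to0$ with $\Pi_E\Lambda\cong\ZZ^m$ forces $\rank(\Lambda\cap E^\perp)=n-m$. Splitting is more than you need here: additivity of rank after tensoring with $\mathbb{Q}$ already suffices. In (ii)$\Rightarrow$(i), torsion-freeness of $\Lambda/(\Lambda\cap E^\perp)$ gives a basis of $\Lambda$ adapted to $E^\perp$. The remaining $m$ basis vectors then project to $m$ vectors spanning the $m$-dimensional space $E$, hence to a basis of $\Pi_E\Lambda$.

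The one step that needs an extra sentence is item~4 of the first direction, whose wording is garbled. There you need that the abstract rank of $\Lambda\cap E^\perp$ equals the dimension of its real span. This holds because $\Lambda\cap E^\perp$ is a discrete subgroup, so it has a basis of $\RR$-linearly independent vectors, and the size of that basis is its abstract rank $k$. Equivalently, $\ZZ$-independent vectors of $B\ZZ^n$ with $B$ invertible are $\RR$-independent, since an integer matrix has the same rank over $\mathbb{Q}$ and over $\RR$.

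Your purely algebraic route is also the right one within the paper's logic. The duality $(\Pi_{E^\perp}\Lambda)^*=\Lambda^*\cap E^\perp$ and the determinant formula in \cref{lem:2.2} already presuppose that the projection is a lattice. Deriving this lemma from them would therefore be circular.
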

Given a full-rank lattice $\Lambda\subseteq\RR^n$, a Borel set $F\subseteq\RR^n$ is a \textit{fundamental domain} of $\Lambda$ if
the translates $F+\lambda$ for $\lambda\in\Lambda$ partition $\RR^n$
up to a set of Lebesgue measure zero. Given a basis $b_1, \ldots, b_n$ for $\Lambda$, we may form the following fundamental domain,
often called a \textit{fundamental parallelepiped} of $\Lambda$, given by $F = B [0, 1)^n$, where $B = (b_1, \ldots, b_n)$. It is a well-known
fact and easy to derive from the definition that the volume of every fundamental domain is equal to $\det \Lambda$. 
Given two sublattices $\Gamma \subseteq \Lambda$, not necessarily of the same rank, we
call $\Gamma$ \textit{primitive} (in $\Lambda$) if $\Gamma = \Lambda \cap \spn_{\RR} \Gamma$. Equivalently, if $\lambda \in \Lambda$ satisfies $k\lambda \in \Gamma$ for some positive integer $k$, then $\lambda \in \Gamma$. Primitive sublattices in $\Lambda$ are in one-to-one correspondence
with lattice subspaces of $\Lambda$.
To be precise, if $E$ is a lattice subspace of $\Lambda$, then
$\Lambda\cap E$ is primitive.
Conversely, if $\Gamma$ is primitive, then
$E=\spn_{\RR}\Gamma$ is a lattice subspace and
$\Gamma=\Lambda\cap E$. For a sublattice $\Gamma\subseteq\Lambda$, its \textit{dual lattice} is the following lattice
\[
    \Gamma^*=\{\gamma^*\in\spn_{\RR}\Gamma:
    \inner{\gamma^*}{\gamma}\in\ZZ
    \text{ for every }\gamma\in\Gamma\}.
\]
Clearly, $(T\Lambda)^*=(T^{-1})^{\mathsf T}\Lambda^*$ for every
invertible linear map $T:\RR^n\to\RR^n$.
The dual lattice $\Gamma^*$ has the same rank and span as $\Gamma$, furthermore, we have the following duality properties
\[
    (\Gamma^*)^*=\Gamma,
    \qquad \det\Gamma\cdot\det\Gamma^*=1.
\]
The following lemma expresses the determinant of a full-rank lattice $\Lambda$ in terms of lattice subspaces $E \subseteq \RR^n$ of $\Lambda$. We can also write the dual of such sublattices explicitly.
\begin{lemma}{(\cite[Section~2]{RegevStephensDavidowitz2024})}\label{lem:2.2}
Let $\Lambda\subseteq\RR^n$ be a full-rank lattice and let
$E\subseteq\RR^n$ be a lattice subspace of $\Lambda$. Then
\[
    \det\Lambda=\det(\Lambda\cap E)\cdot\det(\Pi_{E^\perp}\Lambda).
\]
Moreover,
\[
    (\Pi_{E^\perp}\Lambda)^*=\Lambda^*\cap E^\perp.
\]
In particular, $E^\perp$ is a lattice subspace of $\Lambda^*$.
\end{lemma}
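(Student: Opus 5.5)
The plan is to choose a basis of $\Lambda$ adapted to $E$ and read off both statements from it. Write $\Gamma=\Lambda\cap E$, $k=\dim E$. Since $E$ is a lattice subspace, $\Gamma$ is a primitive rank-$k$ sublattice of $\Lambda$. The first step is the standard fact that a basis $b_1,\dots,b_k$ of a primitive sublattice extends to a basis $b_1,\dots,b_n$ of $\Lambda$. To see this, note that $\Lambda/\Gamma$ is torsion-free by primitivity, and it is finitely generated, hence free. A splitting $\Lambda\cong\Gamma\oplus\Lambda/\Gamma$ then supplies the remaining basis vectors $b_{k+1},\dots,b_n$.

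For the determinant identity, I will use this basis and an orthonormal coordinate system whose first $k$ vectors span $E$. In these coordinates the matrix $B=(b_1,\dots,b_n)$ is block upper triangular. The top-left $k\times k$ block is the coordinate matrix of $b_1,\dots,b_k$ in $E$, whose absolute determinant is $\det\Gamma$. The bottom-right block is the coordinate matrix of $\Pi_{E^\perp}b_{k+1},\dots,\Pi_{E^\perp}b_n$ in $E^\perp$. Since $\Pi_{E^\perp}b_i=0$ for $i\le k$, these $n-k$ vectors generate $\Pi_{E^\perp}\Lambda$ over $\ZZ$. They are linearly independent because $B$ is invertible. Hence $\Pi_{E^\perp}\Lambda$ is a lattice with this basis, and taking determinants gives $\det\Lambda=\det(\Lambda\cap E)\cdot\det(\Pi_{E^\perp}\Lambda)$.

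For the duality statement, I will prove two inclusions.
\begin{itemize}
\item[$\supseteq$] If $y\in\Lambda^*\cap E^\perp$, then for every $\lambda\in\Lambda$ we have $\inner{y}{\Pi_{E^\perp}\lambda}=\inner{y}{\lambda}\in\ZZ$. Since also $y\in E^\perp=\spn_{\RR}(\Pi_{E^\perp}\Lambda)$, this gives $y\in(\Pi_{E^\perp}\Lambda)^*$.
\item[$\subseteq$] If $y\in(\Pi_{E^\perp}\Lambda)^*$, then $y\in E^\perp$ by definition of the dual of a lattice in $E^\perp$. For every $\lambda\in\Lambda$ we get $\inner{y}{\lambda}=\inner{y}{\Pi_{E^\perp}\lambda}\in\ZZ$, so $y\in\Lambda^*$.
\end{itemize}
Finally, $(\Pi_{E^\perp}\Lambda)^*$ has full rank in $E^\perp$ because $\Pi_{E^\perp}\Lambda$ does. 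So $\spn_{\RR}(\Lambda^*\cap E^\perp)=E^\perp$, i.e.\ $E^\perp$ is a lattice subspace of $\Lambda^*$.

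The only step with any real content is the basis-extension fact for primitive sublattices, and hence the discreteness of $\Pi_{E^\perp}\Lambda$ that it yields. I would handle it via the structure theorem for finitely generated abelian groups as indicated. Alternatively, one can argue via \cref{lem:2.1}, but that lemma is stated with the roles of $E$ and $E^\perp$ swapped, so the basis argument is cleaner. Everything else is a direct computation.
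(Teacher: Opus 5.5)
The paper does not prove this lemma; it cites it from Regev and Stephens-Davidowitz, so there is no internal argument to compare against. Your proof is correct and complete.

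\begin{itemize}
\item[(1)] Primitivity makes $\Lambda/\Gamma$ torsion-free. So a basis of $\Lambda\cap E$ extends to a basis of $\Lambda$.
\item[(2)] The block upper triangular form of $B$ does two things at once. It shows that $\Pi_{E^\perp}\Lambda$ is discrete with basis $\Pi_{E^\perp}b_{k+1},\dots,\Pi_{E^\perp}b_n$, and it gives the determinant identity.
\item[(3)] You establish $\spn_{\RR}(\Pi_{E^\perp}\Lambda)=E^\perp$ before using it in the dual inclusions, which is the right order. After that, both inclusions $(\Pi_{E^\perp}\Lambda)^*=\Lambda^*\cap E^\perp$ are immediate, and the lattice-subspace conclusion follows from the dual having full rank in $E^\perp$.
\end{itemize}

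This adapted-basis argument is the standard one behind the cited source. It is often phrased via Gram--Schmidt: there $\det\Lambda=\prod_i\norm{b_i^*}_2$, the first $k$ Gram--Schmidt vectors account for $\det(\Lambda\cap E)$, and the remaining ones are exactly the Gram--Schmidt vectors of the projected basis.

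One small correction to your closing remark: \cref{lem:2.1} applies verbatim with $E^\perp$ in place of $E$, since $(E^\perp)^\perp=E$. So it would give discreteness of $\Pi_{E^\perp}\Lambda$ directly. It does not, however, produce a basis or the determinant formula, so your route is still the more informative one.
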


Recall that a lattice $\Lambda \subseteq \RR^n$ is \textit{stable} if $\det \Lambda = 1$ and every non-zero sublattice
$\{0\} \neq \Gamma \subseteq \Lambda$ satisfies $\det(\Gamma) \ge 1$. Stable lattices will be our primary object of study in this paper, and we
tend to think of them as being "round" lattices.
\begin{lemma}{(\cite[Prop~2.5]{RegevStephensDavidowitz2024})}\label{lem:2.3}
    A full rank lattice $\Lambda \subseteq \RR^n$ is stable if and only if $\Lambda^*$ is stable. If $\Lambda$ is stable and $E$ is a lattice subspace with $\det(\Lambda \cap E) = 1$, then $\Lambda \cap E$ and $\Pi_{E^{\perp}}\Lambda$ are both stable lattices.
\end{lemma}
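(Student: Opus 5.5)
The plan is to reduce everything to primitive sublattices and then use the determinant factorization and duality of \cref{lem:2.2}.

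\emph{Reduction to primitive sublattices.} If $\Gamma\subseteq\Lambda$ is any nonzero sublattice and $\overline\Gamma=\Lambda\cap\spn_{\RR}\Gamma$ is its saturation, then $\Gamma$ has finite index in $\overline\Gamma$ and both have the same rank. Hence $\det\Gamma=[\overline\Gamma:\Gamma]\cdot\det\overline\Gamma\ge\det\overline\Gamma$. So stability only needs to be checked on primitive sublattices, i.e.\ on $\Lambda\cap E$ for lattice subspaces $E$.

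\emph{Stability of $\Lambda^*$.} Suppose $\Lambda$ is stable. Then $\det\Lambda^*=1/\det\Lambda=1$. Let $F$ be a lattice subspace of $\Lambda^*$.
\begin{itemize}
\item[(a)] Apply the ``in particular'' part of \cref{lem:2.2} to the lattice $\Lambda^*$ and the subspace $F$. This shows that $F^\perp$ is a lattice subspace of $(\Lambda^*)^*=\Lambda$.
\item[(b)] Apply \cref{lem:2.2} to $\Lambda$ and the subspace $F^\perp$. This gives $(\Pi_F\Lambda)^*=\Lambda^*\cap F$ and $\det\Lambda=\det(\Lambda\cap F^\perp)\cdot\det(\Pi_F\Lambda)$.
\end{itemize}
Combining these with $\det\Gamma\cdot\det\Gamma^*=1$, we get
\[
\det(\Lambda^*\cap F)=\frac{1}{\det(\Pi_F\Lambda)}=\frac{\det(\Lambda\cap F^\perp)}{\det\Lambda}=\det(\Lambda\cap F^\perp)\ge1 .
\]
Therefore $\Lambda^*$ is stable. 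The converse direction follows by symmetry, since $(\Lambda^*)^*=\Lambda$.

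\emph{The second statement.} Let $E$ be a lattice subspace with $\det(\Lambda\cap E)=1$. Every sublattice of $\Lambda\cap E$ is a sublattice of $\Lambda$, so its determinant is at least $1$; together with $\det(\Lambda\cap E)=1$, this makes $\Lambda\cap E$ stable. For the projection, \cref{lem:2.2} gives $\det(\Pi_{E^\perp}\Lambda)=\det\Lambda/\det(\Lambda\cap E)=1$.

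Now let $\Gamma'\subseteq\Pi_{E^\perp}\Lambda$ be a nonzero sublattice, and set $\Gamma=\{\lambda\in\Lambda:\Pi_{E^\perp}\lambda\in\Gamma'\}$. This is a sublattice of $\Lambda$ with the following properties:
\begin{itemize}
\item[(a)] $\Gamma\cap E=\Lambda\cap E$, so $E$ is a lattice subspace of $\Gamma$;
\item[(b)] $\Pi_{E^\perp}\Gamma=\Gamma'$.
\end{itemize}
Apply \cref{lem:2.2} to $\Gamma$ inside its own span $E\oplus\spn_{\RR}\Gamma'$, where $\Gamma$ is full rank, with the subspace $E$. This gives $\det\Gamma=\det(\Lambda\cap E)\cdot\det\Gamma'=\det\Gamma'$. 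Since $\Gamma$ is a nonzero sublattice of the stable lattice $\Lambda$, we conclude $\det\Gamma'=\det\Gamma\ge1$, so $\Pi_{E^\perp}\Lambda$ is stable.

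The main point requiring care is the bookkeeping in the duality step (a)--(b) of the second paragraph. We must confirm that $F^\perp$ really is a lattice subspace of $\Lambda$, so that \cref{lem:2.2} may be applied in the needed direction. We must also confirm that in the last step \cref{lem:2.2} is applied to $\Gamma$ within its own span, where it is full rank.
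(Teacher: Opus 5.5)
Your proof is correct and complete. The paper gives no argument of its own for this lemma; it cites \cite[Prop~2.5]{RegevStephensDavidowitz2024}.

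Your first part is the standard duality computation. You check only primitive sublattices $\Lambda^*\cap F$ and use \cref{lem:2.2} twice to get $\det(\Lambda^*\cap F)=\det(\Lambda\cap F^\perp)/\det\Lambda$.

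Where you take a genuinely different route is the quotient $\Pi_{E^\perp}\Lambda$. You lift a sublattice $\Gamma'$ to its full preimage $\Gamma\subseteq\Lambda$ and apply the determinant formula of \cref{lem:2.2} inside $\spn_{\RR}\Gamma=E\oplus\spn_{\RR}\Gamma'$.

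The usual route goes through duality instead, and the paper alludes to it in the remark right after the lemma. There, $(\Pi_{E^\perp}\Lambda)^*=\Lambda^*\cap E^\perp$ has determinant $1/\det(\Pi_{E^\perp}\Lambda)=1$. Its sublattices are sublattices of $\Lambda^*$, which is stable by the first part, so $\Lambda^*\cap E^\perp$ is stable. Applying the first part again inside $E^\perp$ then shows $\Pi_{E^\perp}\Lambda$ is stable.

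Comparing the two:
\begin{itemize}
\item The duality route is shorter once the first part is available, but it relies on that part in lower dimension and on re-dualizing within $E^\perp$.
\item Your lifting argument is self-contained and avoids duality for this step.
\item Your argument also gives slightly more. If every sublattice of $\Lambda$ has determinant at least $1$, then for any lattice subspace $E$, every sublattice of $\Pi_{E^\perp}\Lambda$ has determinant at least $1/\det(\Lambda\cap E)$.
\end{itemize}

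One trivial edge case is left implicit: $F=\RR^n$ in the first part. There $\Lambda\cap F^\perp=\{0\}$, and the bound holds via the convention $\det\{0\}=1$, or directly from $\det\Lambda^*=1$.
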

For any lattice subspace $E$ of a stable lattice $\Lambda$,
the projection $\Pi_{E^\perp}\Lambda$ satisfies $\det (\Pi_{E^\perp}\Lambda) \le1$
and need not be stable. However, every nonzero sublattice $\Gamma \subseteq (\Pi_{E^\perp}\Lambda)^*=\Lambda^*\cap E^\perp$ has determinant $\det \Gamma \ge 1$,
as follows immediately from \cref{lem:2.2,lem:2.3}.

\subsection{Discrete Gaussians}
The following bounds \cref{lem:RhoBoundInTermsOfDet} are immediate consequences of Poisson
summation formula, we omit the proof.
\begin{lemma} \label{lem:RhoBoundInTermsOfDet}
  For any full rank lattice $\Lambda \subseteq \RR^n$ and $s>0$
  one has 
  \[
    \frac{s^n}{\det\Lambda} < \rho_s(\Lambda) \le \max\{1,s^n\}\rho(\Lambda).
  \]
\end{lemma}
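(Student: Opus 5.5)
Both bounds come from the Poisson summation formula
\[
\rho_s(\Lambda)=\frac{s^n}{\det\Lambda}\,\rho_{1/s}(\Lambda^*),
\]
which holds because the Fourier transform of $\rho_s$ is $s^n\rho_{1/s}$.

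\emph{Lower bound.} Every term of $\rho_{1/s}(\Lambda^*)$ is nonnegative, and the term at $0$ equals $1$. The terms at nonzero dual vectors are strictly positive and $\Lambda^*\neq\{0\}$ when $n\ge1$, so $\rho_{1/s}(\Lambda^*)>1$. Hence $\rho_s(\Lambda)>s^n/\det\Lambda$.

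\emph{Upper bound.} We split into two cases.
\begin{itemize}
\item If $s\le1$, then $\rho_s(x)=e^{-\pi\norm{x}_2^2/s^2}\le e^{-\pi\norm{x}_2^2}=\rho(x)$ pointwise. Summing over $\Lambda$ gives $\rho_s(\Lambda)\le\rho(\Lambda)$.
\item If $s\ge1$, apply Poisson summation twice. First, $\rho_s(\Lambda)=\frac{s^n}{\det\Lambda}\rho_{1/s}(\Lambda^*)$. Since $1/s\le1$, the same pointwise comparison gives $\rho_{1/s}(\Lambda^*)\le\rho(\Lambda^*)$. Finally, Poisson summation at $s=1$ gives $\rho(\Lambda^*)=\det\Lambda\cdot\rho(\Lambda)$, using $\det\Lambda^*=1/\det\Lambda$. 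Combining, $\rho_s(\Lambda)\le s^n\rho(\Lambda)$.
\end{itemize}
Together the two cases give $\rho_s(\Lambda)\le\max\{1,s^n\}\rho(\Lambda)$.

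The only nontrivial input is Poisson summation for a Schwartz function on a lattice. Its applicability is standard, so no step is a real obstacle.
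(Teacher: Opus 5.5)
Your proof is correct and follows the route the paper indicates: the paper omits the proof, calling both bounds immediate consequences of Poisson summation. Your argument is exactly that: Poisson summation $\rho_s(\Lambda)=\frac{s^n}{\det\Lambda}\rho_{1/s}(\Lambda^*)$, combined with pointwise monotonicity of $\rho_s$ in $s$, applied on the primal side when $s\le1$ and on the dual side when $s\ge1$.
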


\subsection{Quotients and other analytic background}
Let $\Lambda\subseteq\RR^n$ be a full-rank lattice. We define $\RR^n / \Lambda$ as the following set $\RR^n / \Lambda = \{x + \Lambda : x \in \RR^n\}$.
Translations by lattice vectors preserve the Euclidean metric, so it
induces a Riemannian metric on $\RR^n/\Lambda$ for which the quotient map
$\pi:\RR^n\to\RR^n/\Lambda$, $\pi(x)=x+\Lambda$, is a local isometry.
With this metric, $\RR^n/\Lambda$ is a compact flat $n$-dimensional
Riemannian manifold, called a \textit{flat torus}. A function $f:\RR^n\to\RR$ is \textit{$\Lambda$-periodic} if, for every $x\in\RR^n$ and $\lambda\in\Lambda$,
\[
    f(x+\lambda)=f(x).
\]
Such a function is constant on each coset $x+\Lambda$ and therefore
defines a function $\bar f:\RR^n/\Lambda\to\RR$ by
$\bar f(x+\Lambda)=f(x)$.
Conversely, every function $\bar f : \RR^n / \Lambda \to \RR$ induces a
$\Lambda$-periodic function $f=\bar f\circ\pi$. From now on, fix a full-rank lattice $\Lambda\subseteq\RR^n$ and write
$\pi(x)=x+\Lambda$.
Choose a fundamental domain $F \subseteq \RR^n$ for $\Lambda$. The \emph{uniform probability measure} on $\RR^n/\Lambda$ is the law of
$U+\Lambda$, where $U\sim\Unif(F)$ is uniform. It is independent of the choice of $F$ and
satisfies
\[
    \E_{U\sim\Unif(F)}\,f(U+\Lambda)
    =\frac{1}{\det\Lambda}\int_F f(x+\Lambda)\,dx
\]
for every bounded measurable $f:\RR^n/\Lambda\to\RR$. More generally, any probability measure $\mu$ on $\RR^n$ induces a law
on the quotient. This \emph{pushforward} $\pi_{\#}\mu$ is the law of
$X+\Lambda$ when $X\sim\mu$; equivalently,
\[
    (\pi_{\#}\mu)(A)=\mu(\pi^{-1}(A)),
    \quad\text{for Borel }A\subseteq\RR^n/\Lambda.
\]
We can define differentiation on $\RR^n/\Lambda$ (it is in the standard way as we would define for any Riemannian manifold).
A function $f:\RR^n/\Lambda\to\RR$ is smooth if $f\circ\pi$ is smooth
on $\RR^n$, and its gradient is
\[
    \nabla f(x+\Lambda)=\nabla(f\circ\pi)(x).
\]
This is well defined because $\nabla(f\circ\pi)$ is $\Lambda$-periodic. Let $\mu$ be a Borel probability measure on $\RR^n$ or $\RR^n/\Lambda$. For a nonnegative Borel function $h$ for which the following expressions are finite, its \emph{entropy} is
\[
    \Ent_\mu(h)
    =\int h\log h\,d\mu
    -\left(\int h\,d\mu\right)\log\left(\int h\,d\mu\right),
\]
with the convention $0\log0=0$. The \emph{logarithmic Sobolev constant} $C_{\LS}(\mu)$ is the
least $C\ge0$ such that
\[
    \Ent_\mu(f^2)\le 2C\int\norm{\nabla f}_2^2\,d\mu
\]
for every smooth function $f$ for which the expressions are finite.
We set $C_{\LS}(\mu)=+\infty$ if no finite $C$ exists. The following criterion, explained in the proof outline of \cref{thm:1.1}, is an easy way to construct a measure on $\RR^n$ whose projection to $\RR^n / \Lambda$ is uniform.

\begin{lemma}\label{lem:2.5}
Let $X$ have probability density $p$ on $\RR^n$. Then $X+\Lambda$ is
uniformly distributed on $\RR^n/\Lambda$ if and only if
\[
    \sum_{\lambda\in\Lambda}p(x+\lambda)=\frac{1}{\det\Lambda},
    \quad\text{for almost every }x\in\RR^n.
\]
\end{lemma}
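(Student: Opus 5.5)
We reduce the statement to a change of variables over a fundamental domain. Fix a fundamental domain $F$ for $\Lambda$. Since the translates $F+\lambda$, $\lambda\in\Lambda$, tile $\RR^n$ up to a null set, the map $\Phi:F\times\Lambda\to\RR^n$, $(y,\lambda)\mapsto y+\lambda$, is a measure-preserving bijection up to null sets. Define the periodized density
\[
  P(x)=\sum_{\lambda\in\Lambda}p(x+\lambda).
\]
This is a nonnegative, $\Lambda$-periodic, measurable function with values in $[0,\infty]$. By Tonelli, $\int_F P(y)\,dy=\int_{\RR^n}p=1$, so $P$ is finite almost everywhere.

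The key identity concerns an arbitrary bounded measurable $g:\RR^n/\Lambda\to\RR$. Decomposing $\RR^n$ into the translates $F+\lambda$ and substituting $x=y+\lambda$ gives
\[
  \E\,g(X+\Lambda)=\int_{\RR^n}g(x+\Lambda)\,p(x)\,dx=\sum_{\lambda\in\Lambda}\int_F g(y+\Lambda)\,p(y+\lambda)\,dy=\int_F g(y+\Lambda)\,P(y)\,dy .
\]
Here we used $g(y+\lambda+\Lambda)=g(y+\Lambda)$, and the interchange of sum and integral is justified by Fubini, since $|g|\le\|g\|_\infty$ and $\sum_\lambda\int_F p(y+\lambda)\,dy=1$. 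By the formula for the uniform measure recalled above, $X+\Lambda$ is uniform if and only if
\[
  \int_F g(y+\Lambda)\Bigl(P(y)-\tfrac{1}{\det\Lambda}\Bigr)\,dy=0
\]
for every such $g$.

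For the direction "if", suppose $P=1/\det\Lambda$ almost everywhere. Then the integrand above vanishes almost everywhere on $F$, so the identity holds and $X+\Lambda$ is uniform.

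For the direction "only if", assume $X+\Lambda$ is uniform. For each Borel set $A\subseteq F$, take $g=\mathbf 1_{\pi(A)}$. This is bounded and measurable on the quotient, because $\pi^{-1}(\pi(A))=A+\Lambda$ is a countable union of Borel sets. On $F$, $g(y+\Lambda)=\mathbf 1_A(y)$, since distinct points of $F$ lie in distinct cosets. We therefore get $\int_A\bigl(P-1/\det\Lambda\bigr)=0$ for all Borel $A\subseteq F$. Taking $A=\{P>1/\det\Lambda\}\cap F$ and then $A=\{P<1/\det\Lambda\}\cap F$ shows that $P=1/\det\Lambda$ almost everywhere on $F$. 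Finally, $P$ is $\Lambda$-periodic and $\RR^n=\bigcup_\lambda(F+\lambda)$ up to a null set, so the equality holds almost everywhere on $\RR^n$.

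The only genuinely delicate points are measure-theoretic. The first is checking that $\pi(A)$ is measurable, which is handled by the countable-union description of its preimage. The second is justifying the Fubini interchange, which needs only the nonnegativity of $p$ and the boundedness of $g$. Everything else is bookkeeping.
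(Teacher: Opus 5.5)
Your proof is correct and takes essentially the same route as the paper's. The paper fixes $F=B[0,1)^n$, uses Tonelli to show that the reduction $r(X)\in F$ has density $\sum_{\lambda\in\Lambda}p(x+\lambda)$ on $F$, compares this with $1/\det\Lambda$, and extends to $\RR^n$ by periodicity; this is your test-function identity specialized to indicators of Borel $A\subseteq F$. One small nit: if the translates of $F$ tile only up to a null set, then $g(y+\Lambda)=\mathbf 1_A(y)$ holds only for almost every $y\in F$. That still suffices, or you can take the half-open parallelepiped as the paper does.
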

\begin{proof}
Choose a basis $b_1, \ldots, b_n$ for $\Lambda$ and form the matrix $B = (b_1, \ldots, b_n)$. Set $F=B[0,1)^n$.
For every $x \in \RR^n$, let $r(x)\in F$ be the unique point in $F$ satisfying $x - r(x) \in \Lambda$.
For every Borel set $A\subseteq F$, we may use the disjointness of the translates $F + \lambda$ for $\lambda \in \Lambda$
and Tonelli's theorem to get
\begin{align*}
    \PP(r(X)\in A)
    &=\sum_{\lambda\in\Lambda}\PP(X\in A+\lambda)\\
    &=\sum_{\lambda\in\Lambda}\int_{A+\lambda}p(y)\,dy
      =\sum_{\lambda\in\Lambda}\int_A p(x+\lambda)\,dx\\
    &=\int_A\sum_{\lambda\in\Lambda}p(x+\lambda)\,dx.
\end{align*}
Since $\Vol_n(F)=\det\Lambda$, the random vector $r(X)$ is uniform
on $F$ precisely when
\[
    \sum_{\lambda\in\Lambda}p(x+\lambda)=\frac{1}{\det\Lambda},
    \quad\text{for almost every }x\in F.
\]
The latter holds for almost every $x \in F$ if and only if it holds for almost every $x \in \RR^n$ by periodicity, and since $r(X)$ is uniform on $F$ if and only if $X + \Lambda$ is uniform on $\RR^n/\Lambda$, we are done.
\end{proof}

\section{Proving \texorpdfstring{\cref{thm:1.1}}{Theorem \getrefnumber{thm:1.1}}}\label{sec:3}

\subsection{Overview of the proof of \texorpdfstring{\cref{thm:1.1}}{Theorem \getrefnumber{thm:1.1}}}

The main ideas behind the proof of \cref{thm:1.1} can be described briefly,
although the details occupy the rest of this section. For a symmetric positive-definite matrix $A \succ 0$ we use the Gaussian weight
\[
    \rho_A(x)=e^{-\pi\inner{x}{A^{-1}x}},
\]
and normalize $\rho_A$ by dividing by sum of $\rho_A$ over
$x+\Lambda$ and by $\det\Lambda$:
\[
    f(x)=\frac{1}{\det \Lambda} \cdot \frac{\rho_A(x)}{\rho_A(x+\Lambda)}.
\]
This immediately implies that $\sum_{\lambda\in\Lambda}f(x+\lambda)=1/\det\Lambda$ for every $x \in \RR^n$.
Further, observe that $f$ is a probability density over $\RR^n$. Also, $f$ is smooth, strictly positive, and symmetric, and its law is
uniform modulo $\Lambda$ by \cref{lem:2.5}. Thus it remains to choose $A$ so that $f$ is $(2\pi/e)$-strongly log-concave.
For $x \in \RR^n$, we denote by $D_{x + \Lambda,A}$ the \textit{discrete Gaussian} defined through the following law on $x + \Lambda$:
\[
    \PP_{Y \sim D_{x + \Lambda, A}}(Y = y) = \frac{\rho_A(y)}{\rho_A(x + \Lambda)}, \quad \text{for } y \in x + \Lambda.
\]
Differentiating $-\log f$ twice yields, by \cref{lemma:3.3},
\begin{equation}\label{eq:1.2}
    \nabla^2(-\log f)(x)
    =4\pi^2 A^{-1}\Cov(D_{x+\Lambda,A})A^{-1}.
\end{equation}
Regev and Stephens-Davidowitz~\cite[Corollary~3.2]{RegevStephensDavidowitz2017}
show that
\begin{equation}\label{eq:1.3}
    \Cov(D_{x+\Lambda,A})
    \succcurlyeq\Cov(D_{\Lambda,A}),
    \quad\text{for every }x\in\RR^n.
\end{equation}
Plugging \eqref{eq:1.3} into \eqref{eq:1.2} yields
\begin{equation} \label{eq:HessianVsInvACovInvA}
    \nabla^2(-\log f)(x)
    \succcurlyeq4\pi^2 A^{-1}\Cov(D_{\Lambda,A})A^{-1}.
  \end{equation}
  It therefore suffices to find $A \succ 0$ satisfying
  \begin{equation} \label{eq:OverviewAinvCovAinvvsID}
    A^{-1}\Cov(D_{\Lambda,A})A^{-1} \succcurlyeq \frac{1}{2\pi e} \Id.
  \end{equation}
  For that purpose we consider the optimization problem
  \begin{equation} \label{eq:MaxProblem}
   \max\Big\{ \frac{\log\big( \rho_{A}(\Lambda)\big)}{\tr(A)} \mid A \succcurlyeq 0, A \neq 0 \Big\}
 \end{equation}
which is a variant of a quantity introduced by Dadush and Regev~\cite[Def 5.1]{DadushRegev2016}.
Testing $A=e\Id$ and applying Poisson summation gives an objective value greater than $1/(2e)$, since $\det\Lambda\le1$. We show below that a maximizer exists, allowing singular matrices with the Gaussian weight supported on their image. At such a maximizer, the optimality condition forces $E=\im(A)$ to be a nonzero lattice subspace and yields \eqref{eq:OverviewAinvCovAinvvsID} on $E$, with $A_E=A|_E$ and the lattice $\Lambda\cap E$.

Hence using $A_E$ gives us a $(2\pi/e)$-strongly log-concave density $f_{E}: E \to \RR$ on $E$, which is uniform modulo
$\Lambda\cap E$. If $E=\RR^n$, we are done. Otherwise,
the dual of $\Pi_{E^\perp}\Lambda$ is $\Lambda^*\cap E^\perp$ and is therefore
weakly stable. Induction on the dimension gives a $(2\pi/e)$-strongly log-concave density
$f_{E^{\perp}} : E^{\perp} \to \RR$ on $E^\perp$ that is uniform modulo $\Pi_{E^\perp}\Lambda$.
Then $f(x+y)=f_E(x)\cdot f_{E^{\perp}}(y)$,
for $x\in E$ and $y\in E^\perp$, is $(2\pi/e)$-strongly log-concave
and uniform modulo $\Lambda$. We now give the details.

\subsection{Proof of \texorpdfstring{\cref{thm:1.1}}{Theorem \getrefnumber{thm:1.1}}}

Our first inequality \cref{lemma:3.1} that we present is a slight modification of a result due to Regev and Stephens-Davidowitz~\cite{RegevStephensDavidowitz2017}. First, observe as an immediate consequence of the
parallelogram law that
\[
    \rho_A(x)^2\rho_A(y)^2 = \rho_A(x + y)\rho_A(x - y), \quad \text{for every }x,y\in\RR^n.
\]
We provide a "lattice analogue" of this formula.
\begin{lemma} \label{lemma:3.1}
    For every positive-definite symmetric matrix $A \succ 0$ and every $x, y \in \RR^n$, we have that
    \[
        \rho_A(x + \Lambda)^2\rho_A(y + \Lambda)^2 \le \rho_A(\Lambda)^2 \rho_A(x + y + \Lambda)\rho_A(x - y + \Lambda).
    \]
\end{lemma}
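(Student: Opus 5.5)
We will prove the inequality by pairing the two sums and rearranging over a coarser index set, reducing the lattice statement to the pointwise parallelogram identity $\rho_A(u)\rho_A(v)=\rho_A(\tfrac{u+v}{\sqrt2})\rho_A(\tfrac{u-v}{\sqrt2})$. This is the standard Regev--Stephens-Davidowitz argument. To lighten notation, we first apply the linear change of variables $x\mapsto A^{-1/2}x$. This replaces $\Lambda$ by $A^{-1/2}\Lambda$ and $\rho_A$ by $\rho$, so we may assume $A=\Id$.

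Write $\rho(x+\Lambda)\rho(y+\Lambda)=\sum_{u,v\in\Lambda}\rho(x+u)\rho(y+v)$. The parallelogram law gives
\[
\rho(x+u)\rho(y+v)=\rho\bigl(\tfrac{x+y+u+v}{\sqrt2}\bigr)\rho\bigl(\tfrac{x-y+u-v}{\sqrt2}\bigr).
\]
We reindex by $w=u+v$ and $z=u-v$. The map $(u,v)\mapsto(w,z)$ is a bijection from $\Lambda^2$ onto $L=\{(w,z)\in\Lambda^2: w-z\in 2\Lambda\}$. Now $L$ is a disjoint union over cosets $c\in\Lambda/2\Lambda$ of the sets $(c+2\Lambda)\times(c+2\Lambda)$. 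Hence
\[
\rho(x+\Lambda)\rho(y+\Lambda)=\sum_{c\in\Lambda/2\Lambda}\rho\bigl(\tfrac{x+y+c}{\sqrt2}+\sqrt2\Lambda\bigr)\,\rho\bigl(\tfrac{x-y+c}{\sqrt2}+\sqrt2\Lambda\bigr).
\]
Apply Cauchy--Schwarz over $c$:
\[
\bigl(\rho(x+\Lambda)\rho(y+\Lambda)\bigr)^2\le \Bigl(\sum_c\rho\bigl(\tfrac{x+y+c}{\sqrt2}+\sqrt2\Lambda\bigr)^2\Bigr)\Bigl(\sum_c\rho\bigl(\tfrac{x-y+c}{\sqrt2}+\sqrt2\Lambda\bigr)^2\Bigr).
\]
Each factor is then evaluated by running the same identity backwards with both arguments equal. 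Take $x=y=t$ in the displayed identity, with $t$ chosen so that $2t/\sqrt2=\sqrt2 t$ matches. More cleanly, apply the identity with the pair $(t,t)$ in place of $(x,y)$:
\[
\rho(t+\Lambda)^2=\sum_c\rho\bigl(\sqrt2 t+\tfrac{c}{\sqrt2}+\sqrt2\Lambda\bigr)\rho\bigl(\tfrac{c}{\sqrt2}+\sqrt2\Lambda\bigr).
\]
This is not yet a sum of squares. Instead we use the identity with the pair $(t,0)$:
\[
\rho(t+\Lambda)\rho(\Lambda)=\sum_c \rho\bigl(\tfrac{t+c}{\sqrt2}+\sqrt2\Lambda\bigr)^2,
\]
since $x+y=x-y=t$ when $y=0$. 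Taking $t=x+y$ and $t=x-y$ turns the two factors into $\rho(x+y+\Lambda)\rho(\Lambda)$ and $\rho(x-y+\Lambda)\rho(\Lambda)$. This gives exactly
\[
\rho(x+\Lambda)^2\rho(y+\Lambda)^2\le\rho(\Lambda)^2\rho(x+y+\Lambda)\rho(x-y+\Lambda).
\]

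The main point to check is the bijection and coset decomposition $\Lambda^2\leftrightarrow\bigsqcup_c(c+2\Lambda)^2$, together with the correct scaling of the cosets after dividing by $\sqrt2$. Absolute convergence of all sums is immediate from Gaussian decay, which justifies the rearrangements. The reduction to $A=\Id$ is routine, since $\rho_A(x)=\rho(A^{-1/2}x)$.
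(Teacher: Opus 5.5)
Your proof is correct and is essentially the paper's argument: the parallelogram law, the reindexing $(u,v)\mapsto(u+v,u-v)$ onto pairs congruent modulo $2\Lambda$, Cauchy--Schwarz over $\Lambda/2\Lambda$, and then the same identity at the pair $(t,0)$ to evaluate each factor, which is what the paper means by ``using the parallelogram law again.'' The reduction to $A=\Id$ is harmless but not needed, and you should delete the abandoned $(t,t)$ computation from the write-up.
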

\begin{proof}
    The parallelogram law yields that
    \begin{align*}
    \rho_A(x + \Lambda)\rho_A(y + \Lambda) &= \sum_{\lambda_1, \lambda_2 \in \Lambda} \rho_A(\lambda_1 + x)\rho_A(\lambda_2 + y) \\
        &= \sum_{\lambda_1, \lambda_2 \in \Lambda} \sqrt{\rho_A(\lambda_1 + \lambda_2 + x + y)}\sqrt{\rho_A(\lambda_1 - \lambda_2 + x - y)}.
    \end{align*}
    Now we observe that the change of variables $(\lambda_1, \lambda_2) \to (\lambda_1 + \lambda_2, \lambda_1 - \lambda_2)$ maps $\Lambda^2$ bijectively onto the pairs of lattice vectors congruent modulo $2\Lambda$. Thus,
    \begin{align*}
        \rho_A(x + \Lambda)\rho_A(y + \Lambda)
        &= \sum_{\substack{\lambda_1, \lambda_2 \in \Lambda\\
            \lambda_1 \equiv \lambda_2 \pmod{2\Lambda}}}
            \sqrt{\rho_A(\lambda_1 + x + y)}
            \sqrt{\rho_A(\lambda_2 + x - y)}\\
        &= \sum_{\lambda+2\Lambda\in\Lambda/(2\Lambda)}
            \left(\sum_{\lambda_1\in\lambda+2\Lambda}
                \sqrt{\rho_A(\lambda_1 + x + y)}\right)
            \left(\sum_{\lambda_2\in\lambda+2\Lambda}
                \sqrt{\rho_A(\lambda_2 + x - y)}\right).
    \end{align*}
    Applying Cauchy--Schwarz and using the parallelogram law again gives
    \[
    \begin{aligned}
        &\rho_A(x + \Lambda)^2\rho_A(y + \Lambda)^2\\
        &\le
            \left[\sum_{\lambda+2\Lambda\in\Lambda/(2\Lambda)}
                \left(\sum_{\lambda_1\in\lambda+2\Lambda}
                    \sqrt{\rho_A(\lambda_1 + x + y)}\right)^2\right]\\
        &\qquad {}\times
            \left[\sum_{\lambda+2\Lambda\in\Lambda/(2\Lambda)}
                \left(\sum_{\lambda_2\in\lambda+2\Lambda}
                    \sqrt{\rho_A(\lambda_2 + x - y)}\right)^2\right]\\
        &= \rho_A(\Lambda)^2
            \rho_A(x + y + \Lambda)\rho_A(x - y + \Lambda),
    \end{aligned}
    \]
    as desired.
  \end{proof}
Regev and Stephens-Davidowitz~\cite{RegevStephensDavidowitz2017} prove the inequality in Lemma~\ref{lemma:3.1} for the case that $A = \Id$.
As a consequence of \cref{lemma:3.1} we obtain \cref{lemma:3.2}, which gives properties about the map $x \to \Cov(D_{x + \Lambda, A})$.

\begin{lemma} \label{lemma:3.2}
    Let $\Lambda\subseteq\RR^n$ be a full-rank lattice, let $x\in\RR^n$ and 
    let $A$ be a symmetric positive-definite matrix. Then the covariance of $D_{x+\Lambda,A}$ is minimized when $x=0$,
    i.e.
    \[
       0 \preccurlyeq \Cov(D_{\Lambda,A})
        \preccurlyeq\Cov(D_{x+\Lambda,A}).
    \]
\end{lemma}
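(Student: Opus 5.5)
The plan is to extract the covariance comparison from \cref{lemma:3.1} by a second-order expansion. Set $\phi(x)=\log\rho_A(x+\Lambda)$, which is smooth, even and $\Lambda$-periodic. We compute its derivatives directly. Writing $\rho_A(y)=e^{-\pi\inner{y}{A^{-1}y}}$, we have $\nabla\phi(x)=-2\pi A^{-1}\E\,Y$ with $Y\sim D_{x+\Lambda,A}$. Differentiating once more gives
\[
\nabla^2\phi(x)=-2\pi A^{-1}+4\pi^2A^{-1}\Cov(D_{x+\Lambda,A})A^{-1}.
\]
This is the computation referenced as \cref{lemma:3.3}; it is routine, since absolute and uniform convergence of the Gaussian sums on compacts justifies termwise differentiation. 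So it suffices to show $\nabla^2\phi(x)\succcurlyeq\nabla^2\phi(0)$ for every $x$. Conjugating by the invertible matrix $A$ then gives the comparison of covariances. The inequality $0\preccurlyeq\Cov(D_{\Lambda,A})$ is trivial, since it is a covariance matrix.

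Next, take logarithms in \cref{lemma:3.1} with $x$ fixed and $y=tv$ replaced by a small vector. Writing $\psi=\phi$, this gives
\[
2\phi(x)+2\phi(y)\le 2\phi(0)+\phi(x+y)+\phi(x-y).
\]
Equivalently,
\[
\phi(x+y)+\phi(x-y)-2\phi(x)\;\ge\;2\bigl(\phi(y)-\phi(0)\bigr).
\]
Put $y=tv$ for a unit vector $v$ and let $t\to0$. The left side equals $t^2\inner{v}{\nabla^2\phi(x)v}+o(t^2)$. For the right side, $\phi$ is even, so $\nabla\phi(0)=0$, and hence $\phi(tv)-\phi(0)=\tfrac{t^2}{2}\inner{v}{\nabla^2\phi(0)v}+o(t^2)$. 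The right side is therefore $t^2\inner{v}{\nabla^2\phi(0)v}+o(t^2)$. Dividing by $t^2$ and letting $t\to0$ yields $\nabla^2\phi(x)\succcurlyeq\nabla^2\phi(0)$ as quadratic forms.

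Substituting the Hessian formula, the $-2\pi A^{-1}$ terms cancel, and we get
\[
4\pi^2A^{-1}\Cov(D_{x+\Lambda,A})A^{-1}\succcurlyeq4\pi^2A^{-1}\Cov(D_{\Lambda,A})A^{-1}.
\]
Multiplying on both sides by $A/(2\pi)$, a congruence that preserves the Loewner order, gives the claim.

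The only delicate point is justifying smoothness and the Hessian formula, which requires exchanging derivatives with the lattice sum. This is handled by the Gaussian decay of $\rho_A$ and its derivatives, since polynomial factors are dominated locally uniformly in $x$. Everything else is the symmetric second-difference argument above, which uses the evenness of $\phi$ to kill the first-order term at $0$.
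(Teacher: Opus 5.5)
Your proposal is correct and follows the paper's proof: you take logarithms in \cref{lemma:3.1}, note that for fixed $x$ the resulting function of $y$ is nonnegative and vanishes at $y=0$, so its Hessian $2\nabla^2\phi(x)-2\nabla^2\phi(0)$ there is positive semidefinite, and then substitute the Hessian formula of \cref{lemma:3.3} and conjugate by $A$. Your Taylor expansion, with evenness killing the linear term, is just an explicit version of the paper's ``minimum at $y=0$'' step; the paper also proves $\Cov(D_{\Lambda,A})\succ 0$, which is stronger than the trivial $\succcurlyeq 0$ the statement asks for.
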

\begin{proof}
    The law $D_{\Lambda,A}$ is symmetric, so its mean is zero.
    Fix $x\in\RR^n\setminus\{0\}$. Since $\Lambda$ spans $\RR^n$, there
    is some $\lambda\in\Lambda$ with $\inner{x}{\lambda}\neq0$;
    otherwise $x$ would be orthogonal to all of $\RR^n$, forcing $x=0$.
    Every lattice point has positive probability, and hence
    \[
    \begin{aligned}
        \inner{x}{\Cov(D_{\Lambda,A})x}
        &=\E_{Y\sim D_{\Lambda,A}}\,\inner{x}{Y}^2\\
        &\ge\PP_{Y\sim D_{\Lambda,A}}(Y=\lambda)
            \inner{x}{\lambda}^2>0.
    \end{aligned}
    \]
    Since $x\neq0$ was arbitrary, this proves that $\Cov(D_{\Lambda,A})$ is positive-definite.
    
    Next, we prove that the covariance
    of $D_{x + \Lambda,A}$ is minimized when $x = 0$.
    For the comparison under shifts, taking logarithms in
    \cref{lemma:3.1} gives
    \[
    \begin{aligned}
        \log\rho_A(x+y+\Lambda)+\log\rho_A(x-y+\Lambda) -2\log\rho_A(x+\Lambda)-2\log\rho_A(y+\Lambda)
        +2\log\rho_A(\Lambda)\ge0.
    \end{aligned}
    \]
    For fixed $x\in\RR^n$, the expression above has a minimum at $y=0$.
    Since the maps $y\mapsto x+y$ and $y\mapsto x-y$ are affine,
    its Hessian at zero is
    \[
        2\nabla_x^2\log\rho_A(x+\Lambda)
        -2\left.\nabla_z^2\log\rho_A(z+\Lambda)\right|_{z=0}
        \succcurlyeq0.
    \]
    Substituting the Hessian identity proved in \cref{lemma:3.3} below
    cancels the terms $-2\pi A^{-1}$. Congruence by $A$ then gives
    \[
        \Cov(D_{x+\Lambda,A})
        \succcurlyeq\Cov(D_{\Lambda,A}). \qedhere
    \]
\end{proof}
Again, the statement of Lemma~\ref{lemma:3.2} is known in the case of $A = \Id$, see the remark on page 5 of \cite{RegevStephensDavidowitz2017}. 
Recall our candidate density $f$ given by the equation
\[
    f(x) = \frac{1}{\det \Lambda} \cdot \frac{\rho_A(x)}{\rho_A(x + \Lambda)}.
\]
The following \cref{lemma:3.3} relates the Hessian of $-\log f$
to the covariance of the discrete Gaussian $D_{x+\Lambda, A}$.
\begin{lemma} \label{lemma:3.3}
    Let $\Lambda\subseteq\RR^n$ be a full-rank lattice, let $A\succ0$ be
    symmetric, and let $f$ be the density defined above.
    Then
    \[
    \begin{aligned}
        \nabla^2(-\log f)(x)
        &=4\pi^2A^{-1}\Cov(D_{x+\Lambda,A})A^{-1}, \quad\text{for every }x\in\RR^n.
    \end{aligned}
    \]
\end{lemma}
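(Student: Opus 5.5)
We write $-\log f(x) = \log\det\Lambda + \pi\inner{x}{A^{-1}x} + \log\rho_A(x+\Lambda)$ and compute the Hessian of each term separately. The constant term contributes nothing, and the quadratic term contributes $2\pi A^{-1}$. All the work is therefore in the Hessian of $\phi(x) = \log \rho_A(x+\Lambda) = \log\sum_{\lambda\in\Lambda}\rho_A(x+\lambda)$. The target identity is
\[
\nabla^2 \phi(x) = -2\pi A^{-1} + 4\pi^2 A^{-1}\Cov(D_{x+\Lambda,A})A^{-1}.
\]
This is exactly the identity quoted in the proof of \cref{lemma:3.2}, where the $-2\pi A^{-1}$ terms cancel. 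Adding $2\pi A^{-1}$ then gives the claim.

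Before differentiating, we justify termwise differentiation of the series. For $x$ in a compact set, $\rho_A(x+\lambda)$ and its first and second derivatives are bounded by a polynomial in $\norm{\lambda}_2$ times $e^{-c\norm{\lambda}_2^2}$ for some $c>0$ depending on $A$ and the compact set. Such bounds are summable over $\Lambda$. So the series and its derivative series converge locally uniformly, $\phi$ is smooth, and we may differentiate under the sum.

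For $y = x+\lambda$ we have $\nabla\rho_A(y) = -2\pi A^{-1}y\,\rho_A(y)$ and
\[
\nabla^2\rho_A(y) = \bigl(4\pi^2 A^{-1}yy^{\mathsf T}A^{-1} - 2\pi A^{-1}\bigr)\rho_A(y).
\]
Write $S(x) = \rho_A(x+\Lambda)$. Then $\nabla\phi = \nabla S/S = -2\pi A^{-1}\E[Y]$ with $Y\sim D_{x+\Lambda,A}$. Using $\nabla^2 \log S = \nabla^2 S/S - (\nabla S/S)(\nabla S/S)^{\mathsf T}$, we get
\[
\nabla^2 \phi = 4\pi^2 A^{-1}\E[YY^{\mathsf T}]A^{-1} - 2\pi A^{-1} - 4\pi^2 A^{-1}\E[Y]\E[Y]^{\mathsf T}A^{-1}.
\]
This equals $-2\pi A^{-1} + 4\pi^2A^{-1}\Cov(Y)A^{-1}$, using the symmetry of $A^{-1}$ to pull it out of both outer products.

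The only mildly delicate step is the domination argument justifying termwise differentiation. It is routine because the terms decay like Gaussians, and the remaining computation is a direct calculation.
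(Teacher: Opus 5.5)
Your proposal is correct and follows the paper's proof: you split $-\log f$ into a constant, the quadratic term contributing $2\pi A^{-1}$, and $\log\rho_A(x+\Lambda)$, whose Hessian you compute by termwise differentiation as $-2\pi A^{-1}+4\pi^2A^{-1}\Cov(D_{x+\Lambda,A})A^{-1}$. The only difference is that you write out the derivative computation and the Gaussian domination argument justifying termwise differentiation, both of which the paper leaves implicit.
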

\begin{proof}
    We may take logarithms giving
    \[
        -\log f(x)=\log\det\Lambda
        +\pi\inner{x}{A^{-1}x}+\log\rho_A(x+\Lambda).
    \]
    Hence
    \[
        \nabla^2(-\log f)(x)
        =2\pi A^{-1}+\nabla_x^2\log\rho_A(x+\Lambda).
    \]
    Termwise differentiation of $\rho_A(x + \Lambda)$ yields
    \[
        \nabla_x^2\log\rho_A(x+\Lambda)
        =-2\pi A^{-1}
        +4\pi^2A^{-1}\Cov(D_{x+\Lambda,A})A^{-1}. \qedhere
    \]
\end{proof}

\color{black}

\begin{lemma} \label{lem:InductionOfMainTheorem}
  Let $\Lambda \subseteq \RR^n$ be a non-zero lattice with $\det(\Lambda) \le 1$.
  Then there exists a matrix $A \in \RR^{n \times n}$ with $A \succcurlyeq 0$ so that $E = \im(A)$ is a nonzero lattice subspace with
  \begin{equation} \label{eq:AinvCovAInvAtLeastId}
    A^{+}\Cov(D_{\Lambda \cap E,A})A^+ \succcurlyeq \frac{1}{2\pi e} \Id_E
  \end{equation}
\end{lemma}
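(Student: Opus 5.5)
The plan is to take $A$ to be a maximizer of the optimization problem \eqref{eq:MaxProblem}. For singular $A\succcurlyeq0$ we interpret $\rho_A(x)=e^{-\pi\inner{x}{A^+x}}$ for $x\in\im(A)$ and $\rho_A(x)=0$ otherwise, so that $\rho_A(\Lambda)=\rho_A(\Lambda\cap\im A)$. Set $\Phi(A)=\log\rho_A(\Lambda)/\tr(A)$. Without loss of generality $\Lambda$ is full rank: if not, we work inside $\spn_\RR\Lambda$ and extend by zero.

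\textbf{Step 1: the value is large.} Test $A=e\Id$. By \cref{lem:RhoBoundInTermsOfDet} with $s=\sqrt e$,
\[
\rho_{e\Id}(\Lambda)>\frac{e^{n/2}}{\det\Lambda}\ge e^{n/2}.
\]
Hence $\Phi(e\Id)>\frac{n/2}{ne}=\frac1{2e}$, and so $m:=\sup\Phi>\frac1{2e}>0$.

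\textbf{Step 2: a maximizer exists.} This is the main technical step.
\begin{itemize}
\item \emph{Large trace.} If $\tr A=t$, then $A\preccurlyeq t\Id$, so $\rho_A\le\rho_{t\Id}$ pointwise. By \cref{lem:RhoBoundInTermsOfDet}, $\log\rho_A(\Lambda)\le \frac n2\log t+O(1)$, so $\Phi(A)\to0$ as $t\to\infty$.
\item \emph{Small trace.} If $\tr A=t$ is small, then $\rho_A(\Lambda)-1\le\rho_{t\Id}(\Lambda\setminus\{0\})\le C e^{-c/t}$. Therefore $\Phi(A)\le Ce^{-c/t}/t\to0$.
\item \emph{Compactness.} Since $m>0$, we may restrict to the compact set $\{A\succcurlyeq0:\ a\le\tr A\le b\}$ for suitable $0<a<b$.
\item \emph{Continuity.} Let $A_k\to A_0$ within this set. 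For each $\lambda$, we have $\rho_{A_k}(\lambda)\to\rho_{A_0}(\lambda)$, and this holds also when $\lambda\notin\im A_0$, where $\inner{\lambda}{A_k^{-1}\lambda}\to\infty$. The terms are dominated by $\rho_{b\Id}(\lambda)$, which is summable. By dominated convergence $\Phi$ is continuous on this set, so the supremum is attained at some $A$.
\end{itemize}

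\textbf{Step 3: $E=\im A$ is a nonzero lattice subspace.} First, $A\neq0$ by definition, so $E\neq\{0\}$. Let $E'=\spn_\RR(\Lambda\cap E)\subseteq E$. If $E'\subsetneq E$, replace $A$ by its compression $A'=\Pi_{E'}A|_{E'}$ restricted to $E'$. This is done in a way that does not decrease $\rho$ on $\Lambda\cap E'$: use the Schur-complement bound $A'^{+}\preccurlyeq A^+$ on $E'$ for the compression, so the Gaussian weights of these lattice points only grow. At the same time the trace strictly drops, because $A$ is positive definite on $E$. This strictly increases $\Phi$, a contradiction. Hence $E'=E$, and $E$ is a lattice subspace with $\rho_A(\Lambda)=\rho_{A_E}(\Lambda\cap E)$.

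\textbf{Step 4: first-order optimality.} For any symmetric $B$ supported on $E$, the matrix $A+tB$ remains positive definite on $E$ for small $|t|$. Thus $t\mapsto\log\rho_{A+tB}(\Lambda\cap E)-m\,\tr(A+tB)$ attains a maximum $0$ at $t=0$, and its derivative must vanish. Differentiating $-\pi\inner{y}{A^{+}y}$ gives $\pi\inner{y}{A^+BA^+y}$. Using that $D_{\Lambda\cap E,A}$ has mean zero, the stationarity condition reads
\[
\pi\,\tr\bigl(A^+\Cov(D_{\Lambda\cap E,A})A^+B\bigr)=m\,\tr(B),\quad\text{for all symmetric }B \text{ on }E.
\]
Hence $A^+\Cov(D_{\Lambda\cap E,A})A^+=\frac m\pi\Id_E$. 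Since $m\ge\frac1{2e}$ by Step 1, this gives $A^+\Cov(D_{\Lambda\cap E,A})A^+\succcurlyeq\frac1{2\pi e}\Id_E$, which is \eqref{eq:AinvCovAInvAtLeastId}.

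The expected main obstacle is Step 2, specifically the continuity of $\Phi$ at the boundary of the PSD cone; Step 3's compression inequality also needs to be checked carefully.
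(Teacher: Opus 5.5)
Steps 1 and 4 match the paper, but the continuity claim in Step 2 is false, precisely at the boundary you flagged. Pointwise convergence $\rho_{A_k}(\lambda)\to\rho_{A_0}(\lambda)$ does hold when $A_0\succ0$ or $\lambda\notin\im A_0$. It fails when $A_0$ is singular and $\lambda\in\im A_0$, even along positive definite sequences. Take
\[
A_k=\begin{pmatrix}1&\varepsilon_k\\ \varepsilon_k&2\varepsilon_k^2\end{pmatrix}\succ0\;\longrightarrow\;A_0=\begin{pmatrix}1&0\\ 0&0\end{pmatrix}.
\]
Then $\inner{e_1}{A_k^{-1}e_1}=2$ for every $k$, but $\inner{e_1}{A_0^+e_1}=1$. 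For $\Lambda=\ZZ^2$ one gets $\rho_{A_k}(\ZZ^2)\to\sum_{a\in\ZZ}e^{-2\pi a^2}<\rho_{A_0}(\ZZ^2)$ while $\tr A_k\to\tr A_0$, so $\Phi$ itself is discontinuous at $A_0$. With singular $A_k=v_kv_k^{\mathsf T}$, $v_k=(1,\varepsilon_k)$ and $\varepsilon_k$ irrational, you even get $\Phi(A_k)=0$. So dominated convergence does not give continuity. Maximizers can genuinely be singular (that is why the lemma only produces a subspace $E$), so this boundary cannot be sidestepped.

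What is true, and suffices, is upper semicontinuity; this is how the paper proves Claim I. Set $q_A(x)=\inner{x}{A^+x}$ on $\im A$ and $+\infty$ off it. The identity $q_A(x)=\sup_{z\in\RR^n}\{2\inner{x}{z}-\inner{z}{Az}\}$ makes $A\mapsto q_A(x)$ lower semicontinuous, so $\limsup_k\rho_{A_k}(\lambda)\le\rho_{A_0}(\lambda)$. Your dominator $\rho_{b\Id}$ together with reverse Fatou then gives $\limsup_k\rho_{A_k}(\Lambda)\le\rho_{A_0}(\Lambda)$. An upper semicontinuous function attains its maximum on your compact trace slab, which is all Step 2 needs.

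Your Step 3 is correct but takes a different route from the paper. The paper reads the lattice-subspace property off the stationarity identity. Since $\Cov(D_{\Lambda\cap E,A})$ has image inside $\spn_\RR(\Lambda\cap E)$, the identity $A^+\Cov(D_{\Lambda\cap E,A})A^+=\frac{\Phi_\Lambda(A)}{\pi}\Id_E$ with $\Phi_\Lambda(A)>0$ forces $\spn_\RR(\Lambda\cap E)=E$.

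Your Schur-complement compression is also valid. The compression of $A^+$ to $E'$ dominates the inverse of the compression of $A$, and the trace drops by that of the positive definite block on $E\cap(E')^\perp$. This avoids differentiation. You should record two small points explicitly: $m>0$ gives $\rho_A(\Lambda)>1$, so $\Lambda\cap E\neq\{0\}$ and the compressed matrix is nonzero; and $\log\rho_A(\Lambda)>0$ is what makes the increase of $\Phi$ strict. The paper's version is shorter only because it comes for free from the identity already needed in Step 4.
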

\begin{proof}
  Without loss of generality we may assume that $\Lambda$ has full rank.
  We use the extension
  \[
    \rho_A(x) = \begin{cases} \exp(-\pi \inner{x}{A^+x}) & \text{if }x \in \im(A) \\
      0 & \text{if }x \notin \im(A) \end{cases}
  \]
  for (possibly singular) matrices $A \succcurlyeq 0$. 
  We define the potential function 
  \[
  \Phi_{\Lambda}(A) =  \frac{\log \rho_{A}(\Lambda)}{\tr(A)}
  \]
  and consider the optimization problem
  \begin{equation} \label{eq:MaxProblemII}
 \Phi^*_{\Lambda} = \max\Big\{ \Phi_{\Lambda}(A) \mid A \succcurlyeq 0, A \neq 0 \Big\}
 \end{equation}
 which can be understood as a continuous variant of finding a dense sublattice. 

 First, we lower bound the value of \eqref{eq:MaxProblemII}.
 In fact, testing $A=e\Id$ gives
\[
 \Phi_{\Lambda}(e\Id)
 =\frac{\log\rho_{e\Id}(\Lambda)}{\tr(e\Id)}
 >\frac{\log(e^{n/2}/\det\Lambda)}{en}
 \ge\frac{1}{2e},
\]
where we use Lemma~\ref{lem:RhoBoundInTermsOfDet} at scalar width
$s=\sqrt e$, the identity $\rho_{\sqrt e}=\rho_{e\Id}$, and
the assumption $\det\Lambda\le1$. We now make a claim.

{\bf Claim I.} \emph{The maximum in \eqref{eq:MaxProblemII} is attained.} \\
{\bf Proof of Claim I.} For each fixed $x \in \RR^n$ and every $A \succcurlyeq 0$, write
\[
  q_A(x) = \begin{cases} \inner{x}{A^+x} & \text{if }x \in \im(A), \\
    +\infty & \text{if }x \notin \im(A).
  \end{cases}
\]
The variational identity
\[
  q_A(x) = \sup_{z \in \RR^n}\big\{2\inner{x}{z}-\inner{z}{Az}\big\}
\]
shows that $q_A(x)$ is lower semicontinuous in $A$, since it is a supremum of continuous functions of $A$. Hence $\rho_A(x)=e^{-\pi q_A(x)}$, with $e^{-\infty}=0$, is upper semicontinuous.

To pass to the lattice sum, suppose that $A_j \to A \neq 0$, where all these matrices are positive semidefinite. Choose $u>0$ such that $A_j \preccurlyeq u\Id$ for every $j$. For every $\lambda\in\Lambda$ we then have
\[
  0\le \rho_{A_j}(\lambda)\le e^{-\pi\norm{\lambda}_2^2/u}.
\]
The expression on the right is summable over $\Lambda$. Applying Fatou's lemma to the difference between this dominating Gaussian and $\rho_{A_j}$ gives
\[
  \limsup_{j\to\infty}\rho_{A_j}(\Lambda)
  \le\sum_{\lambda\in\Lambda}\limsup_{j\to\infty}\rho_{A_j}(\lambda)
  \le\rho_A(\Lambda).
\]
Since $\tr(A_j)\to\tr(A)>0$, this proves that $\Phi_\Lambda$ is upper semicontinuous on the nonzero positive semidefinite matrices, including singular ones.

It remains to restrict the optimization to a compact set. For $t>0$, define
\[
  \Phi_{\Lambda,t}^*
  =\max\left\{\frac{\log\rho_A(\Lambda)}{t}
  \mid A\succcurlyeq0,\ \tr(A)=t\right\}.
\]
This maximum exists because its domain is compact and $\Phi_\Lambda$ is upper semicontinuous there. Let
\[
  m=\min_{\lambda\in\Lambda\setminus\{0\}}\norm{\lambda}_2>0,
  \qquad
  C=\sum_{\lambda\in\Lambda\setminus\{0\}}e^{-\pi\norm{\lambda}_2^2/2}<\infty.
\]
If $0<t\le1$ and $\tr(A)=t$, then $A\preccurlyeq t\Id$, and therefore
\[
\begin{split}
  \rho_A(\Lambda)-1
  &\le\sum_{\lambda\in\Lambda\setminus\{0\}}e^{-\pi\norm{\lambda}_2^2/t}\\
  &\le e^{-\pi m^2/(2t)}
     \sum_{\lambda\in\Lambda\setminus\{0\}}e^{-\pi\norm{\lambda}_2^2/(2t)}
  \le C e^{-\pi m^2/(2t)}.
\end{split}
\]
Using $\log(1+r)\le r$ for $r\ge0$, we obtain
\[
  0\le\Phi_{\Lambda,t}^*\le \frac{C}{t}e^{-\pi m^2/(2t)}\longrightarrow0
  \qquad\text{as }t\to0^+.
\]
For $t\ge1$, Lemma~\ref{lem:RhoBoundInTermsOfDet} gives
\[
  \rho_A(\Lambda)\le\rho_{t\Id}(\Lambda)=\rho_{\sqrt t}(\Lambda)\le t^{n/2}\rho(\Lambda),
\]
and hence
\[
  0\le\Phi_{\Lambda,t}^*
  \le\frac{\frac n2\log t+\log\rho(\Lambda)}{t}
  \longrightarrow0
  \qquad\text{as }t\to\infty.
\]
The candidate $A=e\Id$ has $\Phi_\Lambda(e\Id)>1/(2e)$. Thus there exist $0<\ell<en<u$ such that every matrix with $\tr(A)\notin[\ell,u]$ has objective value smaller than $\Phi_\Lambda(e\Id)$. Consequently,
\[
  \Phi_\Lambda^*
  =\max\left\{\Phi_\Lambda(A)\mid A\succcurlyeq0,\ \ell\le\tr(A)\le u\right\}.
\]
The domain is compact and avoids the zero matrix. Upper semicontinuity therefore proves that the maximum is attained. \qed

Now fix $A$ as a matrix attaining \eqref{eq:MaxProblemII} and denote $E = \im(A)$.
Let $H$ be a symmetric matrix with $\im(H)\subseteq E$.
For $\abs{t}$ sufficiently small, $A+tH$ is positive definite on $E$ and
vanishes on $E^\perp$, so its Gaussian weight has the fixed support
$\Lambda\cap E$. The differentiated series is locally dominated by
a summable Gaussian times a polynomial. Since $D_{\Lambda\cap E,A}$
is centered, termwise differentiation gives
\[
    \left.\frac{d}{dt}\log\rho_{A+tH}(\Lambda)\right|_{t=0}
    =\pi\,\tr\bigl(A^+\Cov(D_{\Lambda\cap E,A})A^+H\bigr).
\]
Applying the quotient rule and optimality of $A$, we obtain
\[
\begin{aligned}
    0
    &=\left.\frac{d}{dt}\Phi_\Lambda(A+tH)\right|_{t=0}\\
    &=\frac{1}{\tr(A)}\,
      \tr\!\left[\left(\pi A^+\Cov(D_{\Lambda\cap E,A})A^+
          -\Phi_\Lambda(A)\Id_E\right)H\right].
\end{aligned}
\]
As this holds for every such $H$ we must indeed have
\begin{equation} \label{eq:MainLemmaConclusion} 
 A^+\Cov(D_{\Lambda \cap E,A})A^+ = \frac{\Phi_{\Lambda}(A)}{\pi} \cdot \Id_E
\end{equation}
From this we can see that $E$ must be a lattice subspace since otherwise $\im(\Cov(D_{\Lambda \cap E,A}))$ would be a strict subspace of $E$. 
Finally, as we have proven earlier, $\Phi_{\Lambda}(A) \ge \frac{1}{2e}$ which combined with \eqref{eq:MainLemmaConclusion} concludes the proof.
\end{proof}
We are finally ready to prove \cref{thm:1.1}.
\begin{proof}[Proof of \cref{thm:1.1}]
  We prove the existence statement by induction on $n$.
  Since $\Lambda^*$ is weakly stable we have in particular that $\det \Lambda = \frac{1}{\det \Lambda^*} \le 1$. Hence we may apply Lemma~\ref{lem:InductionOfMainTheorem} and obtain a matrix $A \in \RR^{n \times n}$
  satisfying \eqref{eq:AinvCovAInvAtLeastId} while the image of $A$ is a lattice subspace $E$.
    Define
    \[
        f_E(x)=\frac{1}{\det(\Lambda\cap E)}
        \cdot\frac{\rho_{A}(x)}{\rho_{A}(x+(\Lambda\cap E))},
        \quad\text{for }x\in E.
    \]
    This function is smooth, strictly positive, and even. Since the
    denominator is $(\Lambda\cap E)$-periodic,
    \[
        \sum_{\lambda\in\Lambda\cap E}f_E(x+\lambda)
        =\frac{1}{\det(\Lambda\cap E)}.
    \]
    Integrating over a fundamental domain of $\Lambda\cap E$ gives
    $\int_E f_E=1$, and \cref{lem:2.5} shows that its law is uniform
    modulo $\Lambda\cap E$. Applying
    \cref{lemma:3.3}, then \cref{lemma:3.2}, and finally \cref{lem:InductionOfMainTheorem} gives
    \[
    \begin{aligned}
        \nabla_E^2(-\log f_E)(x)
        &=4\pi^2A^{+}
          \Cov(D_{x+(\Lambda\cap E),A})A^{+}\\
        &\succcurlyeq4\pi^2A^{+}
          \Cov(D_{\Lambda\cap E,A})A^+\\
        &\succcurlyeq \frac{2\pi}{e}\Id_E.
    \end{aligned}
    \]
    If $E=\RR^n$, taking $f=f_E$ proves the existence statement.
    Since $E\neq\{0\}$ by \cref{lem:InductionOfMainTheorem}, this also proves the base
    case $n=1$.

    Suppose now that $0<\dim E<n$.
    As $E$ is a lattice subspace,  $\Pi_{E^\perp}\Lambda$ is a full-rank lattice in
    $E^\perp$, and \cref{lem:2.2} gives
    \[
        (\Pi_{E^\perp}\Lambda)^*=\Lambda^*\cap E^\perp.
    \]
    Every nonzero sublattice of $\Lambda^*\cap E^\perp$ is a sublattice of $\Lambda^*$,
    so $(\Pi_{E^\perp}\Lambda)^*$ is weakly stable. Since $\dim E^\perp<n$, the induction
    hypothesis gives a smooth, strictly positive, even probability
    density $f_{E^\perp}$ satisfying, for every $y\in E^\perp$,
    \[
        \nabla_{E^\perp}^2(-\log f_{E^\perp})(y)
        \succcurlyeq\frac{2\pi}{e}\Id_{E^\perp},
        \qquad
        \sum_{\lambda\in\Pi_{E^\perp}\Lambda}f_{E^\perp}(y+\lambda)=\frac{1}{\det(\Pi_{E^\perp}\Lambda)}.
    \]
    Define
    \[
        f(x+y)=f_E(x)\cdot f_{E^\perp}(y),
        \quad\text{for }x\in E\text{ and }y\in E^\perp.
    \]
    By the orthogonal decomposition $\RR^n=E\oplus E^\perp$, this is a
    smooth, strictly positive, even probability density, and
    \[
        \nabla^2(-\log f)(x+y)
        =\nabla_E^2(-\log f_E)(x)
          \oplus\nabla_{E^\perp}^2(-\log f_{E^\perp})(y)
        \succcurlyeq\frac{2\pi}{e}\Id.
    \]

    It remains to check uniformity modulo $\Lambda$ using \cref{lem:2.5}.
    For each $\lambda_1\in\Pi_{E^\perp}\Lambda$, choose $s_{\lambda_1}\in E$ such that
    $s_{\lambda_1}+\lambda_1\in\Lambda$. The fibers of $\Pi_{E^\perp}$ give
    \[
        \Lambda=\bigsqcup_{\lambda_1\in\Pi_{E^\perp}\Lambda}
        \bigl(s_{\lambda_1}+\lambda_1+(\Lambda\cap E)\bigr).
    \]
    The two identities above and the determinant formula
    in \cref{lem:2.2} therefore imply
    \[
    \begin{aligned}
        \sum_{\lambda\in\Lambda}f(x+y+\lambda)
        &=\sum_{\lambda_1\in\Pi_{E^\perp}\Lambda}f_{E^\perp}(y+\lambda_1)
          \sum_{\lambda_2\in\Lambda\cap E}f_E(x+s_{\lambda_1}+\lambda_2)\\
        &=\frac{1}{\det(\Lambda\cap E)}
          \sum_{\lambda_1\in\Pi_{E^\perp}\Lambda}f_{E^\perp}(y+\lambda_1)\\
        &=\frac{1}{\det(\Lambda\cap E)\cdot\det(\Pi_{E^\perp}\Lambda)}
          =\frac{1}{\det\Lambda}.
    \end{aligned}
    \]
    By \cref{lem:2.5}, the law with density $f$ is uniform modulo
    $\Lambda$, completing the induction.

    For the converse, let $f$ satisfy the stated properties with "curvature"
    $\alpha>0$. Fix a nonzero lattice subspace $E$ of $\Lambda^*$,
    and set $k=\dim E$. By \cref{lem:2.1,lem:2.2},
    $\Pi_E\Lambda$ is a lattice with $(\Pi_E\Lambda)^*=\Lambda^*\cap E$. The marginal
    $f_E(x)=\int_{E^\perp}f(x+y)\,dy$ is even and
    $\alpha$-strongly log-concave by
    Pr\'ekopa's theorem~\cite[Theorem~6]{Prekopa1973}.
    Its law is uniform modulo $\Pi_E\Lambda$, so \cref{lem:2.5} and continuity
    give $f_E(0)\le1/\det(\Pi_E\Lambda)$. Hence
    \[
        1=\int_E f_E(x)\,dx
        \le f_E(0)\int_E e^{-\alpha\norm{x}_2^2/2}\,dx
        \le\det(\Lambda^*\cap E)
          \left(\frac{2\pi}{\alpha}\right)^{k/2}.
    \]
    Thus every primitive sublattice of
    $\sqrt{2\pi/\alpha}\,\Lambda^*$ has determinant at least one, and thus so does every nonzero sublattice. This proves weak stability.
\end{proof}

\section{Applications}\label{sec:4}
As highlighted, we intend to only list a couple implications of the log-concave lattice lift, in order to keep the paper sleek. Our perspective highlights the computer science applications. We would very much like to see future applications between lattices and high-dimensional convex geometry.
\subsection{The covering radius}

We first prove \cref{thm:CoveringRadius}, which we restate for convenience.
\begin{theorem*}[Theorem~\ref{thm:CoveringRadius}]
Let $\Lambda\subseteq\RR^n$ be a lattice such that $\Lambda^*$ is weakly stable and let $K\subseteq\RR^n$
be a convex body with $0 \in\intr(K)$. Then
\[
    \mu(\Lambda,K)\le\sqrt{\frac{2e}{\pi}}\cdot 
    \E\,\norm{G}_K,
\]
where $G \sim N(0,\Id)$ is a standard Gaussian.
\end{theorem*}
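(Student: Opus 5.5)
The plan is to combine the log-concave lattice lift of \cref{thm:1.1} with Harg\'e's theorem, \cref{lem:ConcentrationForAlphaLogConcave}(v). Let $X$ be the symmetric random vector from \cref{thm:1.1}. It has an $\alpha$-strongly log-concave density with $\alpha=2\pi/e$, it is centered by symmetry, and $X+\Lambda$ is uniform on $\RR^n/\Lambda$. Harg\'e's theorem then gives
\[
    \E\,\norm{X}_K\le\sqrt{\frac{e}{2\pi}}\cdot\E\,\norm{G}_K .
\]
This accounts for half of the target constant, since $\sqrt{2e/\pi}=2\sqrt{e/(2\pi)}$. The remaining factor of $2$ will come from a standard averaging argument that turns a uniform-mod-$\Lambda$ random vector into a covering bound.

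Fix a point $x\in\RR^n$. I need some $\lambda\in\Lambda$ with $\norm{x-\lambda}_K\le 2\,\E\,\norm{X}_K$. Take $X,X'$ to be independent copies of the lift. Because $X+\Lambda$ is uniform, so is $x-X+\Lambda$ (translation and negation both preserve the uniform measure on the torus). Hence $x-X$ and $X'$ have the same law modulo $\Lambda$.

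The cleanest way to pass from this to a lattice point is a coupling. Since the law of $X'$ is absolutely continuous and its projection to the torus equals that of $x-X$, I can realize $X'$ jointly with $X$ so that $x-X-X'\in\Lambda$ almost surely. Concretely: sample $X$, let $\bar z=x-X+\Lambda$, and then sample $X'$ from the conditional law of $X'$ given $X'+\Lambda=\bar z$. This conditional law is the distribution on the coset $\bar z$ with weights proportional to $f$, which is well defined by \cref{lem:2.5}. Marginally $X'$ still has density $f$, because the coset $\bar z$ is uniform.

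With this coupling, $\lambda:=x-X-X'\in\Lambda$ and
\[
    \norm{x-\lambda}_K=\norm{X+X'}_K\le\norm{X}_K+\norm{X'}_K .
\]
Taking expectations gives $\E\,\mathrm{dist}_K(x,\Lambda)\le 2\,\E\,\norm{X}_K$. Since the distance is deterministic, some $\lambda$ attains the bound. Here I only use subadditivity of the gauge, which holds for any convex $K$ with $0\in\intr(K)$; symmetry of $K$ is not needed. Taking the maximum over $x$ yields
\[
    \mu(\Lambda,K)\le 2\sqrt{\frac{e}{2\pi}}\,\E\,\norm{G}_K=\sqrt{\frac{2e}{\pi}}\,\E\,\norm{G}_K .
\]

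The step I expect to need the most care is making the coupling rigorous, that is, the disintegration along cosets and the measurability of the conditional sampling. If that becomes awkward, I would avoid it with a simpler averaging argument. Set $g(z)=\min_{\lambda\in\Lambda}\norm{x-z-\lambda}_K$, a $\Lambda$-periodic function of $z$. The bound $g(z)\le\norm{x-z-w}_K$ holds for every $w\in z+\Lambda$. Summing this against the weights $f(w)\det\Lambda$ over $w\in z+\Lambda$, and then integrating $z$ against the uniform measure in the form $z=x-X$, is exactly the coupling computation written as an integral identity via \cref{lem:2.5}. Since $X$ and $X'$ are identically distributed, the resulting bound is again $2\,\E\,\norm{X}_K$.
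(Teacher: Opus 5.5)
Your proposal is correct and is essentially the paper's proof: \cref{thm:1.1} together with Harg\'e's theorem gives $\E\,\norm{X}_K\le\sqrt{e/(2\pi)}\,\E\,\norm{G}_K$, and the factor $2$ comes from the triangle inequality averaged over the uniform coset $x-X+\Lambda$. Your coupling is rigorous; it is the same one the paper uses later for \cref{thm:SubgaussianDistrForStableLattice}.

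The paper avoids the coupling altogether. It applies subadditivity directly to the $\Lambda$-periodic function $\delta(y)=\min_{\lambda\in\Lambda}\norm{y-\lambda}_K$, so that $\delta(x)\le\delta(X)+\delta(x-X)$ and $\E\,\delta(x-X)=\E\,\delta(X)\le\E\,\norm{X}_K$. This is the clean version of your fallback. As written, your fallback is misstated: for $w\in z+\Lambda$ the vector $x-z-w$ need not lie in $x-z+\Lambda$, so $g(z)\le\norm{x-z-w}_K$ fails in general.
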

\begin{proof}
Let $X$ have the even density $f$ from \cref{thm:1.1}, and set
\[
    \delta(x)=\min_{\lambda\in\Lambda}\norm{x-\lambda}_K.
\]
Fix $x\in\RR^n$. For every $y\in\RR^n$, subadditivity gives
$\delta(x)\le\delta(y)+\delta(x-y)$.
Since $\delta$ is $\Lambda$-periodic and both
$X+\Lambda$ and $x-X+\Lambda$ are uniform, we have
\[
    \E\,\delta(x-X)=\E\,\delta(X).
\]
Therefore,
\[
    \delta(x)
    \le\E\,\delta(X)+\E\,\delta(x-X)
    =2\E\,\delta(X)
    \le2\E\,\norm{X}_K
    \le\sqrt{\frac{2e}{\pi}}\cdot \E\,\norm{G}_K.
\]
The last inequality is
\cref{lem:ConcentrationForAlphaLogConcave}(v), with $\alpha=2\pi/e$.
Taking the supremum over $x$ proves the claim.
\end{proof}

\subsection{Flatness and subspace flatness}

We combine \cref{thm:CoveringRadius} with the following estimate
\cite[Theorem~1.4]{Bizeul2026} and the filtration argument of
\cite[Sections~3--4]{ReisRothvoss2026}.

\begin{lemma}\label{lem:HalfDimensionalCovering}
For every full-rank lattice $\Lambda\subseteq\RR^n$ and convex body
$K\subseteq\RR^n$, there are $a\in\intr(K)$, a lattice
subspace $E$ of $\Lambda$ with $\dim E\ge n/2$, and a nonzero
subspace $W$ for which $\Pi_W\Lambda$ is a lattice, such that
\[
\mu(\Lambda\cap E,(K-a)\cap E)
\le C\sqrt{\log n}\cdot R_W(\Lambda,K).
\]
\end{lemma}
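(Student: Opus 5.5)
This lemma is quoted from \cite[Theorem~1.4]{Bizeul2026} together with the filtration argument of \cite[Sections~3--4]{ReisRothvoss2026}. The plan is to reproduce that route: find a large lattice subspace on which the lattice is dense relative to $K$, then cover inside it using an $\ell$-position estimate.

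\emph{Step 1: choosing $a$.} We choose $a$ to be the barycenter of $K$, so that $K-a$ is centered. Then $K-a$ is comparable to its symmetrization: $-(K-a)\subseteq n(K-a)$, and sections through the barycenter lose at most a constant factor in volume. We also apply the volume-preserving affine map from the slicing-based statement quoted in the introduction. After this normalization, $\gamma_n(C\sqrt{\log n}\,T(K-a))\ge 1/2$, which makes the mean width (the $\ell$-norm) of $T(K-a)$ of order $\sqrt{n\log n}$ at unit volume. The key point is that this gain is inherited by sections $(K-a)\cap E$ of dimension at least $n/2$, which is what Bizeul's estimate \cite[Theorem~1.4]{Bizeul2026} provides.

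\emph{Step 2: choosing $E$ via the canonical filtration.} We apply the transformation $T$ to both $\Lambda$ and $K$; covering radii and the ratios $R_W$ are invariant under it. Take the canonical filtration $\{0\}=\Lambda_0\subsetneq\cdots\subsetneq\Lambda_k=\Lambda$, with increasing normalized determinants $r_1<\cdots<r_k$. Let $E=\spn_{\RR}\Lambda_j$ for the smallest $j$ with $\rank\Lambda_j\ge n/2$.

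\emph{Step 3: covering $\Lambda\cap E$.} Since $\Lambda\cap E$ is filtered by pieces whose scaled quotients $\frac1{r_i}(\Lambda_i/\Lambda_{i-1})$ are stable, we cover $\Lambda\cap E$ layer by layer. Covering radii add along a filtration: $\mu(\Lambda_j,K')\le\sum_i \mu(\Lambda_i/\Lambda_{i-1},\Pi K')$. Each stable layer is handled by a Gaussian-type covering bound. A stable lattice has a stable dual by \cref{lem:2.3}, so a bound of the shape of \cref{thm:CoveringRadius} (or the reverse Minkowski-based bound used in \cite{ReisRothvoss2026}) gives a contribution of order $r_i\sqrt{\log n}$ times the inverse volume radius of the relevant projection.

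\emph{Step 4: producing $W$.} Choose $W=(\spn_{\RR}\Lambda_{i-1})^\perp$, or the corresponding quotient direction, for the dominant layer. By \cref{lem:2.1} and primitivity, $\Pi_W\Lambda$ is a lattice. Its determinant is controlled by the product of the $r_i$ beyond level $i-1$, and monotonicity of the $r_i$ lets us compare against $R_W(\Lambda,K)$.

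The main obstacle is Step 3, specifically avoiding an extra $\log n$ factor from summing over the layers. In \cite{ReisRothvoss2026} this is handled by a geometric grouping of the $r_i$, for example grouping layers whose $r_i$ lie within factor $2$ of each other, so that the sum telescopes to a constant times its largest term. It is also why we only ask for $\dim E\ge n/2$ rather than $E=\RR^n$: the sectional volume estimate of Bizeul is available only for sections of at least half the dimension. I would first try to carry out that dyadic grouping directly with the $\sqrt{\log n}$ $\ell$-estimate in place of the $MM^*$ estimate.
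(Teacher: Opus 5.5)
Your skeleton matches the paper's: normalize $K$, filter the transformed lattice, take the first index $i$ with $\dim E_i\ge n/2$, set $W=E_{i-1}^\perp$, and sum layer bounds with geometric decay. However, the step that produces the comparison with $R_W$ is missing, and your explanation of the $n/2$ condition is wrong. In Step 4 you only use $\det(\Pi_W\Lambda)^{1/d}\ge r_i$. But $R_W(\Lambda,K)=\det(\Pi_W\Lambda)^{1/d}/\Vol_d(\Pi_WK)^{1/d}$, so you also need $\Vol_d(\Pi_WK)^{1/d}\le C$, and nothing in your plan gives this.

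The paper gets it by putting $K$ in isotropic position (barycenter $0$, covariance $\Id$). The marginal density $p_W$ then has second moment $d$, so Markov gives $\norm{p_W}_\infty\ge 1/(2\Vol_d(\sqrt{2d}B_2^d))$. Rogers--Shephard gives $\Vol_d(\Pi_WK)\norm{p_W}_\infty\le\binom nd$, hence $\Vol_d(\Pi_WK)^{1/d}\le Cn/d$. This is $O(1)$ exactly because minimality of $i$ forces $d=\dim E_{i-1}^\perp>n/2$. That is the real reason for the half-dimension condition, not a "sectional" form of Bizeul's estimate.

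The normalization you chose does not supply this bound, since you only use $\gamma_n(C\sqrt{\log n}\,TK)\ge 1/2$ at unit volume. Take the unit-volume cylinder $[-L/2,L/2]\times rB_2^{n-1}$ with $r\asymp\sqrt{n/\log n}$. It satisfies that Gaussian condition with barycenter $0$. Yet for $d\approx n/2$ and $W$ containing the axis, $\Vol_d(\Pi_WK)^{1/d}\asymp\sqrt{\log n}$, which would cost you an extra $\sqrt{\log n}$. You need isotropic position specifically.

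Conversely, the Gaussian gauge bound does not have to be "inherited by sections of dimension at least $n/2$". You need it for every layer body $K_j=\Pi_{E_{j-1}^\perp}(K\cap E_j)$, $j\le i$, and most of these live in low dimension. So the mechanism you describe would not even cover the lower layers.

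The paper's mechanism is elementary and works in any dimension:
\begin{itemize}
\item The inclusion $K\cap E_j\cap E_{j-1}^\perp\subseteq K_j$ gives $\norm{x}_{K_j}\le\norm{x}_K$ on $E_j\cap E_{j-1}^\perp$.
\item Jensen, averaging out the independent Gaussian component orthogonal to $E_j\cap E_{j-1}^\perp$, gives $\E\norm{G_j}_{K_j}\le\E\norm{G}_K\le C\sqrt{\log n}$. The last inequality is Bizeul's Theorem~1.4 for isotropic $K$, and it is a bound on the mean, which is what \cref{thm:CoveringRadius} consumes.
\item Each layer therefore contributes $Cr_j\sqrt{\log n}$. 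There is no per-layer "inverse volume radius" factor; volumes enter only through $R_W$.
\end{itemize}

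Finally, instead of the canonical filtration plus ad hoc grouping, the paper uses the well-separated $2$-stable filtration of \cite[Corollary~26]{ReisRothvoss2026}. Its layers satisfy that $2r_jL_j^*$ is weakly stable, which is exactly the hypothesis \cref{thm:CoveringRadius} needs. They also satisfy $r_j\le r_{j+2}/2$, which gives $\sum_{j\le i}r_j\le 4r_i$.
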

\begin{proof}
Both covering radii and the ratios $R_W$ are invariant under invertible
linear maps, with subspaces identified through the induced quotient
maps.
Translating $K$ and applying such a map, assume that its uniform
measure has mean zero and covariance $\Id$. We will take $a=0$ in these
coordinates. By
\cite[Theorem~1.4]{Bizeul2026},
\[
\E\,\norm{G}_K
\le C\sqrt{\log n},
\]
where $G \sim N(0, \Id)$. For a $d$-dimensional subspace $W$, let $p_W$ be the density of the
projection onto $W$ of a uniform point of $K$. Since its second moment
is $d$, Markov's inequality and the Rogers and Shephard~\cite[Theorem~1]{RogersShephard1958} inequality give
\[
\norm{p_W}_\infty
\ge\frac{1}{2\Vol_d(\sqrt{2d}\,B_2^d)},
\qquad
\Vol_d(\Pi_WK)\cdot\norm{p_W}_\infty\le\binom nd.
\]
Consequently,
\begin{equation}\label{eq:IsotropicProjectionVolume}
\Vol_d(\Pi_WK)^{1/d}\le C\binom nd^{1/d}\le Cn/d.
\end{equation}
The case $d=n$ follows from the same formulas, with $p_W$ the uniform
density on $K$.

We recall from \cite[Corollary~26]{ReisRothvoss2026}
that (every lattice) $\Lambda$ admits a
\emph{well-separated $2$-stable filtration}
$\{0\}=\Lambda_0\subsetneq\cdots\subsetneq\Lambda_k=\Lambda$. That is, denoting
$E_j=\spn_{\RR}\Lambda_j$, $L_j=\Pi_{E_{j-1}^\perp}\Lambda_j$,
$d_j=\rank L_j$, and $r_j=(\det L_j)^{1/d_j}$,
such a filtration satisfies
$r_j<r_{j+1}$ for $j \le k-1$ and $r_j\le \frac{r_{j+2}}{2}$ for $j \le k-2$, and $2r_jL_j^*$ is weakly stable.
Fix such a filtration and denote
$K_j=\Pi_{E_{j-1}^\perp}(K\cap E_j)$,
$G_j=\Pi_{E_j\cap E_{j-1}^\perp}G$ and $G^\perp_j = \Pi_{(E_j\cap E_{j-1}^\perp)^\perp} G$.
Since $K\cap E_j\cap E_{j-1}^\perp\subseteq K_j$,
we have $\norm{x}_{K_j}\le\norm{x}_K$ for
$x\in E_j\cap E_{j-1}^\perp$. Jensen's inequality gives
\[
\E\norm{G_j}_{K_j}
\le\E\norm{G_j}_K
\le\E\norm{G_j + G_j^\perp}_K
=\E\norm{G}_K
\le C\sqrt{\log(2n)}.
\]
Applying \cref{thm:CoveringRadius} to $L_j/(2r_j)$ therefore gives
\[
\mu(L_j,K_j)\le \sqrt{\frac{2e}{\pi}} \cdot 2r_j\E\,\norm{G}_K.
\]
Let $i$ be minimal with $\dim E_i\ge n/2$, and set
$E=E_i$, $W=E_{i-1}^\perp$, and $d=\dim W>n/2$.
The triangle inequality along with
\cite[Lemma~41]{ReisRothvoss2026} then yields
\begin{equation}\label{eq:MuVsRW1}
\begin{aligned}
\mu(\Lambda_i,K\cap E)
&\le\sum_{j\le i}\mu(L_j,K_j)\\
&\le\sqrt{\frac{2e}{\pi}}\,
     \left(\sum_{j\le i}2r_j\right)\E\,\norm{G}_K\\
&\le\sqrt{\frac{2e}{\pi}}\cdot8r_i\E\,\norm{G}_K
 \le Cr_i\sqrt{\log n}.
\end{aligned}
\end{equation}
Here $\sum_{j\le i}r_j\le4r_i$ follows from $r_j\le r_{j+2}/2$.
On the other hand, the determinant formula and
\eqref{eq:IsotropicProjectionVolume} together with $d > n/2$ imply
\begin{equation} \label{eq:MuVsRW2}
R_W(\Lambda,K)
=\frac{(\prod_{j=i}^k r_j^{d_j})^{1/d}}
{\Vol_d(\Pi_WK)^{1/d}}
\ge \frac{r_i}
{\Vol_d(\Pi_WK)^{1/d}}
\ge \frac{r_i}{C}. 
\end{equation}
Combining \eqref{eq:MuVsRW1} and \eqref{eq:MuVsRW2} then gives the claim.
\end{proof}
Next we prove the $O(n\sqrt{\log n})$ bound on the flatness constant.  
\begin{proof}[Proof of \cref{cor:SubspaceFlatness,cor:Flatness}]
Take $a,E$ from \cref{lem:HalfDimensionalCovering} and set
$P=\Pi_{E^\perp}$. Translation invariance and the triangle inequality
for covering radii~\cite[Lemma~40]{ReisRothvoss2026} give
\[
\mu(\Lambda,K)
\le\mu(\Lambda\cap E,(K-a)\cap E)+\mu(P\Lambda,PK).
\]
Iterate this inequality on the quotient $(P\Lambda,PK)$, whose dimension
is at most $n/2$. If a one-dimensional quotient is reached, its
covering radius equals the determinant of its lattice divided by the
length of its interval, so it contributes the corresponding ratio $R_W$.
We have
\[
R_W(P\Lambda,PK)=R_W(\Lambda,K),\quad\text{for }W\subseteq E^\perp,
\]
since $\Pi_WP=\Pi_W$. Thus, as in
\cite[Corollary~56]{ReisRothvoss2026}, the recursion produces
nonzero subspaces $W_0,\ldots,W_{m-1}$ with
$\dim W_j\le n/2^j$ and $m\le 1+\lfloor\log_2n\rfloor$ such that
\[
\mu(\Lambda,K)
\le C\sqrt{\log n}
\sum_{j=0}^{m-1}R_{W_j}(\Lambda,K).
\]
Bounding every summand by $\sup_W R_W(\Lambda,K)$ proves
\cref{cor:SubspaceFlatness}.

Recall the lattice width $w_\Lambda(K)$ defined in Section~\ref{sec:1}.
By \cite[Lemma~66]{ReisRothvoss2026},
$w_\Lambda(K)\cdot R_{W_j}(\Lambda,K)\le C\dim W_j$. Hence
\[
\mu(\Lambda,K)\cdot w_\Lambda(K)
\le C\sqrt{\log n}\sum_{j=0}^{m-1}\dim W_j
\le Cn\sqrt{\log n}.
\]
Suppose that $\intr(K)\cap\Lambda=\varnothing$.
We claim that $\mu(\Lambda,K)\ge1$.
Otherwise, choose $\mu(\Lambda,K)<r<1$ and $a\in\intr(K)$.
Since $\Lambda+rK=\RR^n$, there are $\lambda\in\Lambda$ and $z\in K$
such that $-(1-r)a=\lambda+rz$. But then
\[
    -\lambda=(1-r)a+rz\in(1-r)a+rK\subseteq\intr(K),
\]
which is a contradiction. Thus $\mu(\Lambda,K)\ge1$, and the preceding
estimate gives
\[
    w_\Lambda(K)\le Cn\sqrt{\log n}.
\]
This proves \cref{cor:Flatness}.
\end{proof}

\subsection{Subgaussian rounding}

Next, we prove \cref{thm:SubgaussianDistrForStableLattice}.
We recall a result by Dadush, Garg, Lovett and Nikolov~\cite[Theorem~3.4]{DadushGargLovettNikolov2019}
which says that for any finite set $S \subseteq \RR^n$ the following conditions are equivalent:
\begin{enumerate}
\item[(i)] There is a random vector $X$ supported on $S$ and an absolute constant $C>0$ such that
$\PP(\abs{\inner{u}{X}}\ge t)\le2e^{-t^2/C}$ for every $\norm{u}_2=1$ and $t\ge0$.
\item[(ii)] There is an absolute constant $C'>0$ such that
$S\cap C'K\neq\varnothing$ for every symmetric convex body $K\subseteq\RR^n$
with $\gamma_n(K)\ge1/2$.
\end{enumerate}
Here $\gamma_n$ is the standard Gaussian measure. The following coupling gives both
the desired expectation and the subgaussian bound.
\begin{theorem}
Let $\Lambda\subseteq\RR^n$ be a lattice such that $\Lambda^*$ is weakly stable
and let $x\in\RR^n$. There is a random vector $Y\in\Lambda$ such that
$\E\, Y=x$ and $Y-x$ is $\sqrt{2e/\pi}$-subgaussian.
\end{theorem}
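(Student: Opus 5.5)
The plan is to build $Y$ by a coupling with the log-concave lattice lift of \cref{thm:1.1}. Let $X$ have the even, $(2\pi/e)$-strongly log-concave density $f$, so that $X+\Lambda$ is uniform on $\RR^n/\Lambda$. Since $x-X+\Lambda$ is then also uniform, we can condition on the coset. For a coset $c=x-X+\Lambda$, we choose $Y$ on the lattice so that $x - Y$ is distributed as $X$ conditioned on $X\in x-c$. Concretely, we sample $X\sim f$ and set
\[
Y = x - X + \lambda_X ,
\]
where $\lambda_X\in\Lambda$ is chosen so that $Y\in\Lambda$. This requires $x-X+\lambda\in\Lambda$, which is not possible in general, so the construction has to be adjusted.

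The correct version uses two independent copies. Let $X_1,X_2$ be i.i.d.\ with density $f$, conditioned on the event $X_1+X_2\in x+\Lambda$. Formally, for each $\lambda\in\Lambda$ we take the joint density proportional to $f(z)f(x+\lambda-z)$ in $z$, weighted over $\lambda$, and set $Y=X_1+X_2-x+x=X_1+X_2$, which lies in $x+\Lambda$. To get a lattice point, we instead define the law of $Y\in\Lambda$ by
\[
\PP(Y=\lambda)=(f*f)(\lambda - x)\cdot \det\Lambda\cdot\text{(normalization)}.
\]
Since $X_1+X_2+\Lambda$ is uniform (the sum of a uniform coset and an independent coset is uniform), \cref{lem:2.5} gives $\sum_{\lambda}(f*f)(\lambda-x)=1/\det\Lambda$. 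So the law $\PP(Y=\lambda)=\det\Lambda\,(f*f)(\lambda-x)$ is already normalized. Equivalently, $Y-x$ is distributed as $X_1+X_2$ conditioned on $X_1+X_2\in\Lambda-x$.

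For the mean and subgaussianity we use the mixture representation. Fix $u\in\RR^n$ and compute
\[
\E\, e^{\inner{u}{Y-x}}=\det\Lambda\sum_{\lambda}\int f(z)f(\lambda-x-z)e^{\inner{u}{z}}e^{\inner{u}{\lambda-x-z}}\,dz .
\]
Substituting $w=\lambda-x-z$ and using $\sum_\lambda f(\lambda - x - z)g(\cdot)$ periodization, we exchange the sum and integral. The inner sum over $\lambda$ with $z$ fixed is $\sum_{\lambda} f(\lambda-x-z)e^{\inner{u}{\lambda-x-z}}$, and this is where the difficulty lies: we must bound this periodized tilted density by $\frac{1}{\det\Lambda}\E e^{\inner{u}{X}}$ uniformly in $z$. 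Tilting $f$ by $e^{\inner{u}{\cdot}}$ yields again a $(2\pi/e)$-strongly log-concave density, but a non-symmetric one, so the needed estimate is a uniform upper bound on its periodization. I expect this step to be the main obstacle.

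To get around it, I would instead conclude via the Laplace transform with Poisson-type averaging. We can write $\E e^{\inner{u}{Y-x}} = \E_{Z}[\,\cdot\,]$ where the $z$-integral is against $f$ and, by Fubini, the whole expression equals $\det\Lambda\int f(z)e^{\inner{u}{z}}\,h(x+z)\,dz$, with $h(w)=\sum_\lambda f(\lambda-w)e^{\inner{u}{\lambda-w}}$ periodic. Averaging over the $f$-uniform coset does not directly give $\int h = \E e^{\inner{u}{X}}/\det\Lambda$, so I would first try symmetrizing over $\pm u$ or a Cauchy--Schwarz step, and fall back on bounding $\sup h$ via log-concavity.

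If this succeeds, the conclusion is $\E e^{\inner{u}{Y-x}}\le (\E e^{\inner{u}{X}})^2\le e^{\norm{u}^2 e/\pi}$ by \cref{lem:ConcentrationForAlphaLogConcave}(iii), which is exactly $\sqrt{2e/\pi}$-subgaussian. Taking the derivative at $u=0$, together with evenness of $f$, gives $\E Y=x$.
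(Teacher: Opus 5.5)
\textbf{Verdict: genuine gap.} Your law for $Y$ is correct. It is in fact exactly the law of the paper's $Y$: the paper draws $X\sim f$, then $Z$ on $x+X+\Lambda$ with $\PP(Z=x+X+\lambda)=\det(\Lambda)f(x+X+\lambda)$, and sets $Y=x+X-Z$. Your pair $(X_1,X_2)$ is $(X,-Z)$.

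\textbf{The gap.} The proposal stops exactly where the proof begins. The subgaussian estimate is left as ``the main obstacle'' with a list of options, and $\E Y=x$ is then read off from that unproven bound. Evenness of $f$ alone does not give the mean, since $Y-x$ lives on $\Lambda-x$, which is not symmetric unless $2x\in\Lambda$.

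The target you name, $\E e^{\inner{u}{Y-x}}\le(\E e^{\inner{u}{X}})^2$, is also the wrong one. Averaging $x\mapsto \det(\Lambda)\sum_{\lambda}(f*f)(\lambda-x)e^{\inner{u}{\lambda-x}}$ over a fundamental domain gives exactly $\int (f*f)(t)e^{\inner{u}{t}}\,dt=(\E e^{\inner{u}{X}})^2$. So that inequality can hold for every $x$ only with equality, i.e.\ only if the periodization of $(f*f)e^{\inner{u}{\cdot}}$ is constant. That fails in general for $u\neq0$.

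The fallback of bounding $\sup h$ cannot work either. Near $u=0$ the periodization $h$ of the tilted density deviates from $1/\det\Lambda$ at first order in $u$. This route therefore gives at best $1+O(\norm{u}_2)$, not $e^{O(\norm{u}_2^2)}$.

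\textbf{The missing idea.} After conditioning, $X_1$ and $X_2$ are dependent, but each has marginal density exactly $f$. Put the weight $\det(\Lambda)f(z)f(\lambda-x-z)\,dz$ on $(X_1,X_2)=(z,\lambda-x-z)$.
\begin{itemize}
\item Summing over $\lambda$ and using $\sum_{\lambda}f(\lambda-x-z)=1/\det\Lambda$ (evenness plus \cref{thm:1.1}(2)) shows $X_1\sim f$.
\item The substitution $w=\lambda-x-z$ shows $X_2\sim f$ in the same way.
\end{itemize}
This is the same Fubini computation that gives your normalization for every $x$, rather than only almost every $x$ as \cref{lem:2.5} would.

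Cauchy--Schwarz then finishes without any periodization estimate:
\[
\E\, e^{\inner{u}{Y-x}}=\E\bigl(e^{\inner{u}{X_1}}e^{\inner{u}{X_2}}\bigr)\le\bigl(\E\, e^{\inner{2u}{X_1}}\cdot\E\, e^{\inner{2u}{X_2}}\bigr)^{1/2}=\E\, e^{\inner{2u}{X}}\le e^{2\norm{u}_2^2/\alpha}=e^{e\norm{u}_2^2/\pi}.
\]
This is exactly $\sqrt{2e/\pi}$-subgaussianity, with $\alpha=2\pi/e$. It is the exponent you wrote at the end; your intermediate $(\E e^{\inner{u}{X}})^2$ would only give $e^{e\norm{u}_2^2/(2\pi)}$. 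The mean also follows directly: $\E Y-x=\E X_1+\E X_2=0$. This is the paper's argument.
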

\begin{proof}
Set $\alpha=2\pi/e$ and let $\mu$ have the even, $\alpha$-strongly
log-concave density $f$ from \cref{thm:1.1}. For $y\in\RR^n$, define
the probability measure $\mu_y$ on $y+\Lambda$ by
\[
    \mu_y(\{y+\lambda\})=\det(\Lambda)f(y+\lambda),
    \quad\text{for }\lambda\in\Lambda.
\]
The masses sum to one by \cref{thm:1.1}, and $\mu_y$ depends only on
the coset $y+\Lambda$. Draw $X\sim\mu$ and, conditionally on $X$, draw $Z\sim\mu_{x+X}$. Set
$Y=x+X-Z$. Since $Z\in x+X+\Lambda$ almost surely, we have
$Y\in\Lambda$. We claim that $Z\sim\mu$. Indeed, $x+X+\Lambda$ is
uniform, so for a fundamental domain $F$ and every bounded measurable
function $h$,
\[
    \E\, h(Z)
    =\frac{1}{\det\Lambda}\int_F\sum_{\lambda\in\Lambda}
      \det(\Lambda)f(y+\lambda)h(y+\lambda)\,dy
    =\int_{\RR^n}f(y)h(y)\,dy.
\]
It follows that $Z\sim\mu$, and hence
\[
    \E\,Y=x+\E\,X-\E\,Z=x.
\]
We now prove the subgaussian bound. For every $u\in\RR^n$,
Cauchy--Schwarz and \cref{lem:ConcentrationForAlphaLogConcave}(iii) give
\begin{align*}
    \E\, e^{\inner{u}{Y-x}}
    &=\E\,\bigl(e^{\inner{u}{X}}\cdot e^{-\inner{u}{Z}}\bigr)\\
    &\le\bigl(\E\, e^{\inner{2u}{X}}\cdot\E\, e^{-\inner{2u}{Z}}\bigr)^{1/2}
    \le e^{2\norm{u}_2^2/\alpha}.
\end{align*}
Hence $Y-x$ is $2/\sqrt{\alpha}=\sqrt{2e/\pi}$-subgaussian.
\end{proof}

\subsection{Reverse Minkowski}

We first derive the correct order by counting short lattice vectors,
then prove
\cref{thm:ReverseMinkowskiWithTightConstants} by spectral comparison.

\begin{lemma}\label{lem:NumOfShortVectorsInWeaklyStableLattice}
There is an absolute constant $C>0$ such that for every weakly stable
lattice $\Lambda\subseteq\RR^n$ and every $r\ge1$,
\[
    \abs{\Lambda\cap rB_2^n}\le n^{Cr^2}.
\]
\end{lemma}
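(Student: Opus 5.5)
The plan is a spectral comparison: lift the Fourier characters indexed by short vectors of $\Lambda$ to a strongly log-concave measure on $\RR^n$, then count low eigenvalues against a Gaussian.

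\textbf{Step 1: the lift.} Apply \cref{thm:1.1} to the lattice $\Lambda^*$, whose dual $(\Lambda^*)^*=\Lambda$ is weakly stable by assumption. This gives an $\alpha$-strongly log-concave density $f$ on $\RR^n$, with $\alpha=2\pi/e$, such that $X+\Lambda^*$ is uniform on $\RR^n/\Lambda^*$. Write $\mu$ for the law of $X$ and $\nu$ for the uniform measure on the torus $\RR^n/\Lambda^*$.

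\textbf{Step 2: test functions from short vectors.} The characters of $\RR^n/\Lambda^*$ are $e_\lambda(x)=e^{2\pi i\inner{\lambda}{x}}$ for $\lambda\in\Lambda$. They are orthonormal in $L^2(\nu)$ and satisfy $\norm{\nabla e_\lambda}_2^2=4\pi^2\norm{\lambda}_2^2$.

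Let $S=\Lambda\cap rB_2^n$ and $N=\abs{S}$. Consider the space $V$ spanned by $e_\lambda\circ\pi$ for $\lambda\in S$. Every function in $V$ is $\Lambda^*$-periodic, and so is $\abs{\nabla g}^2$ for $g\in V$. Since $\pi_\#\mu=\nu$, all $L^2$ norms and Dirichlet energies of elements of $V$ are the same under $\mu$ as under $\nu$. Hence the lifted functions are orthonormal in $L^2(\mu)$, and every $g\in V$ satisfies
\[
\int\norm{\nabla g}_2^2\,d\mu\le4\pi^2r^2\int\abs{g}^2\,d\mu .
\]
These functions are bounded with bounded gradients, so they lie in the form domain of the weighted Laplacian $L_\mu=\Delta-\inner{\nabla(-\log f)}{\nabla}$. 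By min--max, the $N$-th eigenvalue of $-L_\mu$, counted from $\lambda_1=0$, is at most $4\pi^2r^2$. Working with real parts and imaginary parts, or directly with complex min--max, is harmless here.

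\textbf{Step 3: Gaussian comparison.} Invoke Milman's theorem~\cite{Milman2018}: for an $\alpha$-strongly log-concave $\mu$, one has $\lambda_k(\mu)\ge\lambda_k(\gamma_{n,\alpha})$ for every $k$. If I need a self-contained route instead, I would use Caffarelli's contraction theorem, which gives a $1/\sqrt{\alpha}$-Lipschitz transport map from $\gamma_{n,\alpha}$ to $\mu$; composing test functions with this map and applying min--max yields the same comparison.

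The spectrum of the Ornstein--Uhlenbeck operator for $\gamma_{n,\alpha}$ is $\alpha\cdot\{0,1,2,\dots\}$. The eigenvalue $\alpha j$ comes from Hermite polynomials of degree $j$ and has multiplicity $\binom{n+j-1}{j}$. Put $m=\lfloor 4\pi^2r^2/\alpha\rfloor=\lfloor2\pi e r^2\rfloor$. Since $\lambda_N(\gamma_{n,\alpha})\le\lambda_N(\mu)\le4\pi^2r^2$, the index $N$ is at most the number of Hermite multi-indices of degree at most $m$, so
\[
N\le\binom{n+m}{m}\le\Bigl(\frac{e(n+m)}{m}\Bigr)^{m}.
\]
For $r\ge1$ we have $m\asymp r^2$. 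When $m\le n$ the bound is at most $(2en)^{m}\le n^{Cr^2}$. When $m>n$ we can bound $\binom{n+m}{n}\le(2e)^{m}$, which is again at most $n^{Cr^2}$ for $n\ge2$.

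\textbf{Main obstacle.} The delicate point is the precise form and normalization of the eigenvalue comparison in Step 3. One must check that the torus eigenfunctions genuinely give admissible test functions for the operator on all of $\RR^n$; this is where uniformity modulo $\Lambda^*$ is used. One must also match the scaling conventions between $\alpha$-strong log-concavity and $\gamma_{n,\alpha}$. I would first try Milman's theorem as stated, and fall back to the Caffarelli contraction plus min--max argument if its hypotheses do not match directly. The binomial estimate at the end is routine.
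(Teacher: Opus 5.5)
Your argument is correct, but it is not how the paper proves this lemma. It is essentially the counting-function version of the paper's later \cref{cor:SpectralComparison}.

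The paper's proof uses the lift only through the moment bounds of \cref{lem:ConcentrationForAlphaLogConcave}(iv). The characters $\chi_\lambda$, $\lambda\in\Lambda$, are orthonormal in $L^2(\mu)$. Each $\chi_\lambda$ with $\norm{\lambda}_2\le r$ is within $L^2(\mu)$-distance $1/\sqrt2$ of the degree-$d$ Taylor polynomial $P(\inner{\lambda}{y})$, where $d=\lceil C'r^2\rceil$. Bessel's inequality for the projection onto polynomials of degree at most $d$ then gives $\abs{S}\le2\binom{n+d}{d}$. That argument is elementary: it uses no spectral theory and neither Caffarelli's nor Milman's theorem. It also only needs the lift to be subgaussian, so it would survive a weaker curvature hypothesis.

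Your route uses the full strength of $\alpha$-strong log-concavity through Milman's eigenvalue comparison. In exchange it gives the explicit bound $\abs{S}\le\binom{n+\lfloor 2\pi e r^2\rfloor}{n}$ with no unspecified constant. The same comparison also yields directly the heat-trace bound the paper uses for the sharp reverse Minkowski theorem. In both proofs the count is ultimately the dimension of a space of polynomials of degree $O(r^2)$. In yours they appear as Hermite eigenfunctions of the Ornstein--Uhlenbeck operator; in the paper's they appear as Taylor approximants of the characters.

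One small correction concerns your Caffarelli fallback. The Brenier map from $\gamma_{n,\alpha}$ to $\mu$ is $1$-Lipschitz; the $1/\sqrt{\alpha}$ bound holds for the map from $\gamma_n$. With the constant as you wrote it, composing test functions would give $\lambda_k(L_V)\ge\alpha\,\lambda_k(L_\alpha)$. That is false: take $\mu=\gamma_{n,\alpha}$ with $\alpha=2\pi/e>1$. With either correct normalization you recover $\lambda_k(L_V)\ge\lambda_k(L_\alpha)$, which is all your counting step uses.
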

\begin{proof}
Since $(\Lambda^*)^*=\Lambda$ is weakly stable, \cref{thm:1.1}
applies to $\Lambda^*$. It gives a centered random vector $Y$ with an
$\alpha$-strongly log-concave density, where $\alpha=2\pi/e$, such that
$Y+\Lambda^*$ is uniform. By
\cref{lem:ConcentrationForAlphaLogConcave}(iv), for every integer
$p\ge1$ and $x\in\RR^n$,
\[
    \E\,\abs{\inner{Y}{x}}^{2p}\le2p!\,\norm{x}_2^{2p},
\]
since $\alpha\ge2$. Let $\mu$ be the law of $Y$. On the complex Hilbert
space $L^2(\mu)$ of square-integrable functions, identified up to
$\mu$-almost-everywhere equality, write
\[
    \inner{g}{h}_\mu=\E\,\bigl(g(Y)\overline{h(Y)}\bigr),
    \qquad \norm{g}_\mu=\inner{g}{g}_\mu^{1/2}.
\]
For $\lambda\in\Lambda$, define the character
$\chi_\lambda(y)=e^{2\pi i\inner{\lambda}{y}}$.

We first claim that for $\lambda_1,\lambda_2\in\Lambda$,
$\inner{\chi_{\lambda_1}}{\chi_{\lambda_2}}_\mu=\bm{1}_{\lambda_1=\lambda_2}$.
Indeed, let $B$ be a basis matrix for $\Lambda$ and write $\lambda_1-\lambda_2=Ba$, where
$a\in\ZZ^n$. Since $Y+\Lambda^*$ is uniform and
$F=(B^{-1})^{\mathsf T}[0,1)^n$ is a fundamental domain of $\Lambda^*$,
\[
    \inner{\chi_{\lambda_1}}{\chi_{\lambda_2}}_\mu
    =\frac1{\det\Lambda^*}\int_F e^{2\pi i\inner{\lambda_1-\lambda_2}{y}}\,dy
    =\prod_{j=1}^n\int_0^1e^{2\pi ia_jt}\,dt
    =\bm{1}_{a=0}=\bm{1}_{\lambda_1=\lambda_2}.
\]

Set $d=\lceil C'r^2\rceil$, where $C'>0$ is a sufficiently large
absolute constant, and let
\[
    V=\{P\in L^2(\mu):P\text{ is a polynomial of degree at most }d\}.
\]
Then $\dim V\le\binom{n+d}{d}\le(n+1)^d$. Let $\Pi_V$ be the
orthogonal projection onto $V$, and set $S=\Lambda\cap rB_2^n$.

We also claim that for every $\lambda\in S$,
$\norm{\Pi_V\chi_\lambda}_\mu^2\ge1/2$.
Indeed, the degree-$d$ Taylor polynomial of $e^{2\pi ix}$ satisfies
\begin{equation}\label{eq:ApproxPolynomial}
    P(x)=\sum_{k=0}^d\frac{(2\pi ix)^k}{k!},
    \qquad
    \abs{e^{2\pi ix}-P(x)}\le\frac{\abs{2\pi x}^{d+1}}{(d+1)!},
    \quad\text{for }x\in\RR.
\end{equation}
For $\lambda\in S$, put $Q_\lambda(y)=P(\inner{\lambda}{y})\in V$. The moment bound gives
\begin{align*}
    \norm{\chi_\lambda-Q_\lambda}_\mu^2
    &\le\frac{(2\pi)^{2(d+1)}\cdot\E\,\abs{\inner{\lambda}{Y}}^{2(d+1)}}
      {((d+1)!)^2}\\
    &\le\frac{2(2\pi r)^{2(d+1)}}{(d+1)!}
    \le2\left(\frac{4\pi^2e r^2}{d+1}\right)^{d+1}
    \le\frac12,
\end{align*}
for sufficiently large $C'$. Since an orthogonal projection gives the
best approximation in $V$,
\[
    \norm{\Pi_V\chi_\lambda}_\mu^2
    =1-\norm{\chi_\lambda-\Pi_V\chi_\lambda}_\mu^2
    \ge1-\norm{\chi_\lambda-Q_\lambda}_\mu^2\ge\frac12.
\]
By these two claims and Bessel's inequality,
\[
    \abs{S}\le2\sum_{\lambda\in S}\norm{\Pi_V\chi_\lambda}_\mu^2
    \le2\dim V\le2(n+1)^d\le n^{Cr^2}
\]
for a suitable absolute constant $C$.
\end{proof}

This dimension argument above is related to the proof of reverse Minkowski
for integral lattices by Regev and
Stephens-Davidowitz~\cite{RegevStephensDavidowitz2026Integral}.
A lattice is \emph{integral} if $\inner{\lambda_1}{\lambda_2}\in\ZZ$ for all
$\lambda_1,\lambda_2\in\Lambda$. For such a lattice, the points in each shell
$\{\lambda\in\Lambda:\norm{\lambda}_2^2=k\}$ have integral pairwise inner products
in $[-k,k]$. After normalizing each nonzero shell to the unit sphere,
the bound of Delsarte, Goethals and
Seidel~\cite{DelsarteGoethalsSeidel1975}, as presented by
Koornwinder~\cite{Koornwinder1976}, and summation over $k\le r^2$ give
$\abs{\Lambda\cap rB_2^n}\le n^{O(r^2)}$ for $r\ge1$.
Our argument instead uses the inner product induced by $Y$ and
polynomial approximations to the characters defined in the proof.

We next deduce the reverse Minkowski bound from the counting estimate.
Suppose that $\Lambda$ is weakly stable, and set
$s=1/(C_0\sqrt{\log n})$, where $C_0$ will be chosen below.
Weak stability gives
$\norm{\lambda}_2=\det(\ZZ\lambda)\ge1$ for every $0\neq\lambda\in\Lambda$.
Thus \cref{lem:NumOfShortVectorsInWeaklyStableLattice} yields
\begin{align*}
    \rho_s(\Lambda)
    &\le1+\sum_{i=0}^\infty
      \sum_{\substack{\lambda\in\Lambda\\2^i\le\norm{\lambda}_2<2^{i+1}}}
        e^{-\pi\norm{\lambda}_2^2/s^2}\\
    &\le1+\sum_{i=0}^\infty
      n^{(4C-\pi C_0^2)4^i}\le\frac32.
\end{align*}
For the last inequality, choose $\pi C_0^2\ge4C+2$ and use
$\sum_{i\ge0}n^{-2\cdot4^i}\le\sum_{i\ge0}4^{-(i+1)}=1/3$.

We now use the log-concave lattice lift from \cref{thm:MainResult} with Milman's theorem~\cite{Milman2018}. This gives a comparison of the entire
ordered spectrum and, consequently, an explicit bound on the Gaussian mass.

\begin{samepage}
\begin{corollary}[Comparing the eigenvalues of $\RR^n/\Lambda^*$ to that of a Gaussian]\label{cor:SpectralComparison}
Let $\Lambda\subseteq\RR^n$ be a full-rank weakly stable lattice, and set
$\alpha=2\pi/e$. Let $\nu$ be the uniform probability measure on
$\RR^n/\Lambda^*$, and let
\[
    d\gamma_{n,\alpha}(x)
    =\left(\frac{\alpha}{2\pi}\right)^{n/2}
      e^{-\alpha\norm{x}_2^2/2}\,dx.
\]
Write $L_{\Lambda^*}=-\Delta$ for the nonnegative Laplacian on
$L^2(\nu)$ and
$L_\alpha=-\Delta+\alpha\inner{x}{\nabla}$ for the nonnegative
Ornstein--Uhlenbeck operator on $L^2(\gamma_{n,\alpha})$.
Index eigenvalues in nondecreasing order, with multiplicity, starting at
$\lambda_0=0$. Then, for every $k\ge0$,
\begin{equation}\label{eq:SpectralComparison}
    \lambda_k(L_{\Lambda^*})\ge\lambda_k(L_\alpha).
\end{equation}
Consequently, for every $s>0$,
\begin{equation}\label{eq:GaussianMassComparison}
    \rho_s(\Lambda)
    \le \left(1-e^{-1/(2es^2)}\right)^{-n}.
\end{equation}
\end{corollary}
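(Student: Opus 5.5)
The plan is to transfer the spectral problem on the torus to a Gaussian-type problem on $\RR^n$ using the log-concave lattice lift, and then to read off \eqref{eq:GaussianMassComparison} from the two explicitly known spectra.

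\textbf{Step 1 (the lift).} Since $(\Lambda^*)^*=\Lambda$ is weakly stable, \cref{thm:1.1} applies to the lattice $\Lambda^*$. It gives a smooth, strictly positive, even density $f=e^{-V}$ on $\RR^n$ with $\nabla^2V\succcurlyeq\alpha\Id$, where $\alpha=2\pi/e$, and such that $X+\Lambda^*$ is uniform for $X\sim\mu=f\,dx$. Let $\pi:\RR^n\to\RR^n/\Lambda^*$ be the quotient map.

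\textbf{Step 2 (torus eigenvalues dominate those of $\mu$).} For $g\in C^\infty(\RR^n/\Lambda^*)$ we have $\pi_\#\mu=\nu$. Hence
\[
\int(g\circ\pi)^2\,d\mu=\int g^2\,d\nu,
\qquad
\int\norm{\nabla(g\circ\pi)}_2^2\,d\mu=\int\norm{\nabla g}_2^2\,d\nu,
\]
exactly as in the proof of \cref{cor:MainResultII}. So lifting is an isometric embedding of the form domain of $L_{\Lambda^*}$ into that of the weighted Laplacian $L_\mu=-\Delta+\inner{\nabla V}{\nabla}$ on $L^2(\mu)$, and it preserves Rayleigh quotients. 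Take the span of the first $k+1$ eigenfunctions of $L_{\Lambda^*}$. Its lift is a $(k+1)$-dimensional subspace on which the Rayleigh quotient of $L_\mu$ is at most $\lambda_k(L_{\Lambda^*})$. Courant--Fischer min-max then gives $\lambda_k(L_\mu)\le\lambda_k(L_{\Lambda^*})$.

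\textbf{Step 3 (comparison with the Gaussian).} Invoke Milman's theorem~\cite{Milman2018}: if $\mu=e^{-V}dx$ with $\nabla^2V\succcurlyeq\alpha\Id$, then $\lambda_k(L_\mu)\ge\lambda_k(L_\alpha)$ for every $k$. Milman proves this via contraction / Lipschitz transport from $\gamma_{n,\alpha}$ onto $\mu$, for instance Caffarelli's $1$-Lipschitz map. Combined with Step 2 this yields \eqref{eq:SpectralComparison}.

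I expect this step to be the main obstacle. One must check that Milman's hypotheses (a smooth, strictly convex potential on all of $\RR^n$ with spectrum of $L_\mu$ discrete) are met, and that Courant--Fischer may be applied to the closure of smooth compactly supported, or lifted periodic, functions. My first attempt would be to note that $\alpha$-strong convexity gives discrete spectrum, and that lifted periodic smooth functions lie in the form domain because $\mu$ has Gaussian tails.

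\textbf{Step 4 (the mass bound).} The spectrum of $-\Delta$ on $\RR^n/\Lambda^*$ consists of the characters $e^{2\pi i\inner{\lambda}{x}}$, $\lambda\in\Lambda$, with eigenvalues $4\pi^2\norm{\lambda}_2^2$. Therefore
\[
\rho_s(\Lambda)=\sum_k e^{-\lambda_k(L_{\Lambda^*})/(4\pi s^2)}.
\]
The eigenvalues of $L_\alpha$ are $\alpha(m_1+\cdots+m_n)$ with $m_i\ge0$, coming from Hermite polynomials. Since both sequences are indexed in nondecreasing order, we can compare them termwise using \eqref{eq:SpectralComparison}:
\[
\rho_s(\Lambda)\le\sum_k e^{-\lambda_k(L_\alpha)/(4\pi s^2)}=\Bigl(\sum_{m\ge0}e^{-\alpha m/(4\pi s^2)}\Bigr)^n=\bigl(1-e^{-1/(2es^2)}\bigr)^{-n},
\]
using $\alpha/(4\pi)=1/(2e)$. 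This is \eqref{eq:GaussianMassComparison}.
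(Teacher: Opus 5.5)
Your proposal is correct and follows essentially the same route as the paper. You apply \cref{thm:1.1} to $\Lambda^*$, pull back the first $k+1$ torus eigenfunctions and use min--max to get $\lambda_k(L_V)\le\lambda_k(L_{\Lambda^*})$, invoke Milman's theorem for $\lambda_k(L_V)\ge\lambda_k(L_\alpha)$, and then sum $e^{-t\lambda_k}$ over both ordered spectra with $t=1/(4\pi s^2)$; like you, the paper settles discreteness of the spectrum simply by citing Milman's theorem.
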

\end{samepage}
\begin{proof}
Apply \cref{thm:1.1} to $\Lambda^*$, whose dual $\Lambda$ is weakly
stable. Let $f$ be the resulting density, set $\mu=f\,dx$ and
$V=-\log f$, and write $\pi(x)=x+\Lambda^*$. Then
\[
    \nabla^2V\succcurlyeq\alpha\Id
    \qquad\text{and}\qquad
    \pi_\#\mu=\nu.
\]
Let $L_V=-\Delta+\inner{\nabla V}{\nabla}$.
Milman's theorem~\cite[Theorem~1.1]{Milman2018} implies that $L_V$ has
discrete spectrum and that for $k \ge 0$ we have
\[
    \lambda_k(L_V)\ge\lambda_k(L_\alpha),\qquad k\ge0.
\]
Pulling $g$ back to $\RR^n$ preserves Rayleigh quotients and the pullback of the span of the first $k+1$ torus
eigenfunctions is a $(k+1)$-dimensional subspace of $H^1(\mu)$ on which
every Rayleigh quotient is at most $\lambda_k(L_{\Lambda^*})$.
The min--max principle therefore yields
\[
    \lambda_k(L_V)\le\lambda_k(L_{\Lambda^*}),
\]
this almost gives us what we want.

Now, the characters $x+\Lambda^*\mapsto e^{2\pi i\inner{\lambda}{x}}$,
indexed by $\lambda\in\Lambda$, form an orthonormal eigenbasis of
$L^2(\nu)$, with eigenvalues $4\pi^2\norm{\lambda}_2^2$.
The eigenvalues of $L_\alpha$ are $\alpha\abs{m}$, indexed by
$m\in\ZZ_{\ge0}^n$, where $\abs{m}=m_1+\cdots+m_n$; see
\cite[Section~2.1]{Milman2018}. Applying the decreasing function
$u\mapsto e^{-tu}$ to \eqref{eq:SpectralComparison} and summing gives
\begin{align*}
    \tr\bigl(e^{-tL_{\Lambda^*}}\bigr)
    &=\sum_{\lambda\in\Lambda}e^{-4\pi^2t\norm{\lambda}_2^2}\\
    &\le\sum_{m\in\ZZ_{\ge0}^n}e^{-\alpha t\abs{m}}
    =\prod_{j=1}^n\left(\sum_{\ell=0}^{\infty}e^{-\alpha t\ell}\right)
    =(1-e^{-\alpha t})^{-n},
\end{align*}
and we are done. Substituting
$t=1/(4\pi s^2)$ and $\alpha=2\pi/e$ gives
\eqref{eq:GaussianMassComparison}.
\end{proof}

\begin{proof}[Proof of \cref{thm:ReverseMinkowskiWithTightConstants}]
For $0<s\le 1/\sqrt{2e\log(3n)}$, \cref{cor:SpectralComparison} gives
\[
    \rho_s(\Lambda)
    \le\left(1-e^{-1/(2es^2)}\right)^{-n}
    \le\left(1-\frac{1}{3n}\right)^{-n}
    \le\frac32. \qedhere
    \]
\end{proof}

\bibliographystyle{amsplain0.bst}
\bibliography{main}

\end{document}